\theoremstyle{definition}
\newtheorem{definition}{Definition}[section]
\newtheorem{lemma}[definition]{Lemma}
\newtheorem{proposition}[definition]{Proposition}
\newtheorem*{theorem*}{Theorem}
\newtheorem{theorem}[definition]{Theorem}
\newtheorem{example}[definition]{Example}
\newtheorem{corollary}[definition]{Corollary}
\newtheorem{remark}[definition]{Remark}
\newtheorem{problem}[definition]{Problem}
\newtheorem*{problem*}{Problem}
\newcommand{\N}{\mathbb{N}}
\newcommand{\Npos}{{\mathbb{N}_+}}
\newcommand{\Z}{\mathbb{Z}}
\newcommand{\graph}[1]{{\mathcal{#1}}} 
\newcommand{\glue}{\otimes} 
\newcommand{\abs}[1]{\vert #1 \vert} 
\newcommand{\orb}[1]{\mathcal{O}(#1)} 
\newcommand{\gf}{\mathrm{GF}} 
\newcommand{\gl}{\boxed{\leftarrow}} 
\newcommand{\gr}{\boxed{\rightarrow}} 
\newcommand{\z}{\bm{0}} 
\newcommand{\one}{\bm{1}} 
\newcommand{\yl}{{\uparrow}} 
\newcommand{\al}{{\downarrow}} 
\newcommand{\arr}{\ensuremath{\mathcalligra{r}}} 
\newcommand{\syn}{{\mathrm{S}}} 
\newcommand{\cont}{{\mathrm{C}}} 
\newcommand{\grp}[1]{{\mathcal{#1}}} 
\newcommand{\digs}{\Sigma} 
\DeclareMathOperator{\cyl}{Cyl} 
\DeclareMathOperator{\aut}{Aut} 
\DeclareMathOperator{\occ}{occ} 
\DeclareMathOperator{\id}{Id} 
\DeclareMathAlphabet{\mathcalligra}{T1}{calligra}{m}{n}		
\DeclareFontShape{T1}{calligra}{m}{n}{<->s*[2.2]callig15}{} 
\begin{document}

\title{Glider automata on all transitive sofic shifts}

\author{Johan Kopra}

\affil{Department of Mathematics and Statistics, \\FI-20014 University of Turku, Finland}
\affil{jtjkop@utu.fi}

\date{}

\maketitle

\setcounter{page}{1}

\begin{abstract}
For any infinite transitive sofic shift $X$ we construct a reversible cellular automaton (i.e. an automorphism of the shift $X$) which breaks any given finite point of the subshift into a finite collection of gliders traveling into opposing directions. This shows in addition that every infinite transitive sofic shift has a reversible CA which is sensitive with respect to all directions. As another application we prove a finitary Ryan's theorem: the automorphism group $\aut(X)$ contains a two-element subset whose centralizer consists only of shift maps. We also show that in the class of $S$-gap shifts these results do not extend beyond the sofic case.
\end{abstract}

\providecommand{\keywords}[1]{\textbf{Keywords:} #1}
\noindent\keywords{sofic shifts, synchronizing shifts, cellular automata, automorphisms}

\section{Introduction}

Let $X\subseteq A^\Z$ be a one-dimensional subshift over a symbol set $A$. If $w$ is a finite word over $A$, we may say that an element $x\in X$ is $w$-finite if it begins and ends with infinite repetitions of $w$ (in particular the bi-infinite repetition $w^\Z$ is $w$-finite). In this paper we consider the problem of constructing reversible cellular automata (CA) on $X$ which decompose all $w$-finite configurations into collections of gliders traveling into opposing directions. As a concrete example, consider the binary full shift $X=\{0,1\}^\Z$ and the map $G=P_3\circ P_2\circ P_1:X\to X$ defined as follows. In any $x\in X$, $P_1$ replaces every occurrence of $0010$ by $0110$ and vice versa, $P_2$ replaces every occurrence of $0100$ by $0110$ and vice versa, and $P_3$ replaces every occurrence of $00101$ by $00111$ and vice versa. In Figure \ref{std} we have plotted the sequences $x,G(x),G^2(x),\dots$ on consecutive rows for some $0$-finite $x\in X$. It can be seen that the sequence $x$ eventually diffuses into two different ``fleets'', the one consisting of 1s going to the left and the one consisting of 11s going to the right. It can be proved, along similar lines as in the proofs of Lemma~\ref{left} and Lemma~\ref{right}, that this diffusion happens eventually no matter which finite initial point $x\in X$ is chosen (this is also Theorem~5.1.5 of~\cite{KopDiss}). In Section~\ref{glidertransSect} we construct, on all infinite transitive sofic shifts $X$, a function $G_X$ that we call a diffusive glider CA and that has the same diffusion property as the CA $G$ above. The essential statement is contained in Theorem~\ref{absSofic}. This generalizes the similar construction done for mixing subshifts of finite type (SFT) in~\cite{Kop19b}.

\begin{figure}[ht]
\centering
\includegraphics{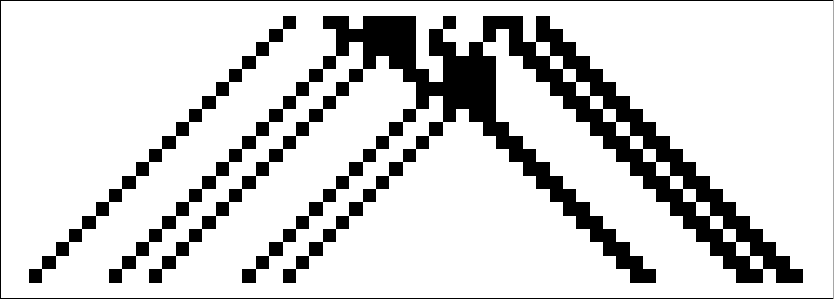}
\caption{The diffusion of $x\in X$ under the map $G:X\to X$. White and black squares correspond to digits $0$ and $1$ respectively.}
\label{std}
\end{figure}

We will perform most parts of the construction of the diffusive glider CA $G_X$ in the more general framework of synchronizing subshifts. One reason for this is that the statements and proofs of the auxiliary lemmas become simpler without using the extra structure of soficness. Using this framework in the construction also allows us to formulate a degree of freedom in the choice of the word $w$ with respect to which we consider finiteness: whenever $X$ is a transitive sofic shift and $w^\Z\in X$ contains a synchronizing word, then we can construct a $G_X$ which diffuses all $w$-finite points. This is relevant, because in SFTs every sufficiently long word is synchronizing, but this is no longer true for sofic shifts. In Subsection~\ref{choiceSubSect} we show that the construction of the CA $G_X$ can fail in an essential way if we do not require that $w^\Z$ contains a synchronizing word.

The existence of such a diffusive glider CA $G_X$ on a subshift $X$ is interesting for several reasons, the first being that $G_X$ can be used to convert an arbitrary finite $x\in X$ into another sequence $G_X^t(x)$ (for some $t\in\Npos$) with a simpler structure, which nevertheless contains all the information concerning the original point $x$ because $G_X$ is invertible. Such maps have been successfully applied to other problems. For example, the paper~\cite{Salo19} contains a construction of a finitely generated group $\grp{G}$ of reversible CA on $A^\Z$ (when $\abs{A}=4$) whose elements can implement any permutation on any finite collection of $0$-finite non-constant configurations that belong to different shift orbits. An essential part of the construction is that one of the generators of $\grp{G}$ is a diffusive glider CA on $A^\Z$. Another example is the construction of a physically universal cellular automaton $G$ on $A^\Z$ (when $\abs{A}=16$) in $\cite{ST17}$. Also here it is essential that $G$ is a diffusive glider CA (but $G$ also implements certain additional collision rules for gliders).

The CA $G_X$ is also interesting when considered in the framework of directional dynamics introduced by Sablik in~\cite{Sab08}. The map $G_X$ shows that every infinite transitive sofic shift $X$ has a reversible CA which is sensitive with respect to all directions. This result is in some sense the best possible, which can be seen by considering a natural class of synchronizing subshifts known as $S$-gap shifts. We show that an $S$-gap shift $X_S$ has a reversible CA which is sensitive with respect to all directions if and only if $X_S$ is an infinite transitive sofic shift.

We also consider a finitary version of Ryan's theorem. Let $X$ be a subshift and denote the set of its reversible CA by $\aut(X)$, which we may consider as an abstract group. According to Ryan's theorem~\cite{BLR88,Ryan74} the center of the group $\aut(X)$ is generated by the shift map $\sigma$ if $X$ is a transitive SFT. There may also be subsets $S\subseteq\aut(X)$ whose centralizers $C(S)$ are generated by $\sigma$. Denote the minimal cardinality of such a finite set $S$ by $k(X)$. In \cite{Salo19} it was proved that $k(X)\leq 10$ when $X$ is the full shift over the four-letter alphabet. In the same paper it is noted that $k(X)$ is an isomorphism invariant of $\aut(X)$ and therefore computing it could theoretically separate $\aut(X)$ and $\aut(Y)$ for some mixing SFTs $X$ and $Y$. Finding good isomorphism invariants of $\aut(X)$ is of great interest, and it is an open problem whether for example $\aut(\{0,1\}^\Z)\simeq\aut(\{0,1,2\}^\Z)$ (Problem 22.1 in \cite{Boy08}). We show that $k(X)=2$ for all infinite transitive sofic shifts, the proof of which uses our diffusive glider automorphism construction. In contrast, we can show that $k(X_S)=\infty$ whenever $X_S$ is a non-sofic $S$-gap shift.

This paper largely follows Chapter~5 of the author's PhD thesis~\cite{KopDiss}, where the construction of $G_X$ was done for infinite mixing sofic shifts $X$.

\section{Preliminaries}
\label{prelim}
In this section we recall some preliminaries concerning symbolic dynamics. The book~\cite{LM95} is a standard reference to the topic.

A finite set $A$ containing at least two elements (\emph{letters}) is called an \emph{alphabet} and the set $A^\Z$ of bi-infinite sequences (\emph{configurations}) over $A$ is called a \emph{full shift}. Formally any $x\in A^\Z$ is a function $\Z\to A$ and the value of $x$ at $i\in\Z$ is denoted by $x[i]$. It contains finite and one-directionally infinite subsequences denoted by $x[i,j]=x[i]x[i+1]\cdots x[j]$, $x[i,\infty]=x[i]x[i+1]\cdots$ and $x[-\infty,i]=\cdots x[i-1]x[i]$. Occasionally we signify the symbol at position zero in a configuration $x$ by a dot as follows:
\[x=\cdots x[-2]x[-1].x[0]x[1]x[2]\cdots.\]

A configuration $x\in A^\Z$ is \emph{periodic} if there is a $p\in\Npos$ such that $x[i+p]=x[i]$ for all $i\in\Z$. Then we may also say that $x$ is $p$-periodic or that $x$ has period $p$.

A \emph{subword} of $x\in A^\Z$ is any finite sequence $x[i,j]$ where $i,j\in\Z$, and we interpret the sequence to be empty if $j<i$. Any finite sequence $w=w[1] w[2]\cdots w[n]$ (also the empty sequence, which is denoted by $\lambda$) where $w[i]\in A$ is a \emph{word} over $A$. The concatenation of a word or a left-infinite sequence $u$ with a word or a right-infinite sequence $v$ is denoted by $uv$. A word $u$ is a \emph{prefix} of a word or a right-infinite sequence $x$ if there is a word or a right-infinite sequence $v$ such that $x=uv$. Similarly, $u$ is a \emph{suffix} of a word or a left-infinite sequence $x$ if there is a word or a left-infinite sequence $v$ such that $x=vu$. The set of all words over $A$ is denoted by $A^*$, and the set of non-empty words is $A^+=A^*\setminus\{\lambda\}$. The set of words of length $n$ is denoted by $A^n$. For a word $w\in A^*$, $\abs{w}$ denotes its length, i.e. $\abs{w}=n\iff w\in A^n$. For any word $w\in A^+$ we denote by ${}^\infty w$ and $w^\infty$ the left- and right-infinite sequences obtained by infinite repetitions of the word $w$. We denote by $w^\Z\in A^\Z$ the configuration defined by $w^\Z[in,(i+1)n-1]=w$ (where $n=\abs{w}$) for every $i\in\Z$. We say that $x\in A^\Z$ is $w\emph{-finite}$ if $x[-\infty,i]={}^\infty w$ and $x[j,\infty]=w^\infty$ for some $i,j\in\Z$.

Any collection of words $L\subseteq A^*$ is called a \emph{language}. For any $S\subseteq A^\Z$ the collection of words appearing as subwords of elements of $S$ is the \emph{language} of $S$, denoted by $L(S)$. For any $L,K\subseteq A^*$, let
\[LK=\{uv\mid u\in L, v\in K\}, \qquad L^*=\{w_1\cdots w_n\mid n\geq 0,w_i\in L\}\subseteq A^*.\]
If $\lambda\notin L$, define $L^+=L^*\setminus\{\lambda\}$ and if $\epsilon\in L$, define $L^+=L^*$.

Given $x\in A^\Z$ and $w\in A^+$ we define the sets of left (resp. right) occurrences of $w$ in $x$ by
\begin{flalign*}
&\occ_\ell(x,w)=\{i\in\Z\mid x[i,i+\abs{w}-1]=w\} \\
\mbox{(resp.)}\quad &\occ_\arr(x,w)=\{i\in\Z\mid x[i-\abs{w}+1,i]=w\}.
\end{flalign*}
Note that both of these sets contain the same information up to a shift in the sense that $\occ_\arr(x,w)=\occ_\ell(x,w)+\abs{w}-1$. Typically we refer to the left occurrences and we say that $w\in A^n$ \emph{occurs} in $x\in A^\Z$ at position $i$ if $i\in\occ_\ell(x,w)$.

For $x,y\in A^\Z$ and $i\in\Z$ we denote by $x\glue_i y\in A^\Z$ the ``gluing'' of $x$ and $y$ at $i$, i.e. $(x\glue_i y)[-\infty,i-1]=x[-\infty,i-1]$ and $(x\glue_i y)[i,\infty]=y[i,\infty]$. Typically we perform gluings at the origin and we denote $x\glue y=x\glue_0y$.

We define the shift map $\sigma_A:A^\Z\to A^\Z$ by $\sigma_A(x)[i]=x[i+1]$ for $x\in A^\Z$, $i\in\Z$. The subscript $A$ in $\sigma_A$ is typically omitted. The set $A^\Z$ is endowed with the product topology (with respect to the discrete topology on $A$), under which $\sigma$ is a homeomorphism on $A^\Z$.  Any closed set $X\subseteq A^\Z$ such that $\sigma(X)=X$ is called a \emph{subshift}. The restriction of $\sigma$ to $X$ may be denoted by $\sigma_X$, but typically the subscript $X$ is omitted. The \emph{orbit} of a point $x\in X$ is $\orb{x}=\{\sigma^i(x)\mid i\in\Z\}$. Any $w\in L(X)\setminus{\epsilon}$ and $i\in\Z$ determine a \emph{cylinder} of $X$
\[\cyl_X(w,i)=\{x\in X\mid w \mbox{ occurs in }x\mbox{ at position }i\}.\]

Next we define the classes of subshifts considered in this paper.

\begin{definition}
A subshift $X$ is \emph{transitive} if for all words $u,v\in L(X)$ there is $w\in L(X)$ such that $uwv\in L(X)$.
\end{definition}

\begin{definition}
A subshift $X$ is \emph{sofic} if $L(X)$ is a regular language.
\end{definition}

Alternatively, any sofic subshift can be given as the collection of labels of bi-infinite paths on some finite labeled directed graph. Yet another characterization can be given by considering syntactic monoids.

\begin{definition}Let $X$ be any subshift. The set of contexts of $w\in L(X)$ is defined by $\cont_X(w)=\{(w_1,w_2)\mid w_1ww_2\in L(X)\}$. We define an equivalence relation called the \emph{syntactic relation} on $L(X)$ as follows. For any $u,v\in L(X)$ let $u\sim v$ if $\cont_X(u)=\cont_X(v)$. The equivalence class containing $w\in L(X)$ is denoted by $\syn_X(w)$ and the collection of all equivalence classes is denoted by $\syn_X$. The subscript $X$ can be omitted when the subshift is clear from the context. By adjoining a zero element $0$ to $\syn_X$ we get a \emph{syntactic monoid} where multiplication is defined by $\syn_X(u)\syn_X(v)=\syn_X(uv)$ if $uv\in L(X)$, and otherwise the product of two elements is equal to $0$. It is easy to show that this monoid operation is well defined.
\end{definition}

It is known that a subshift $X$ is sofic if and only if $\syn_X$ is finite, see e.g. Theorem 6.1.2 in~\cite{Kit97}.

\begin{definition}
Given a subshift $X$, we say that a word $w\in L(X)$ is synchronizing if
\[\forall u,v\in L(X): uw,wv\in L(X)\implies uwv\in L(X).\]
We say that a transitive subshift $X$ is synchronizing if $L(X)$ contains a synchronizing word.
\end{definition}
Transitive sofic shifts in particular are synchronizing, which follows by using the results of~\cite{LM95} in Section~3.3 and in Exercise~3.3.3.

Next we define the structure preserving transformations between subshifts.

\begin{definition}
Let $X\subseteq A^\Z$ and $Y\subseteq B^\Z$ be subshifts. We say that the map $F:X\to Y$ is a \emph{morphism} from $X$ to $Y$ if there exist integers $m\leq a$ (memory and anticipation) and a \emph{local rule} $f:A^{a-m+1}\to B$ such that $F(x)[i]=f(x[i+m],\dots,x[i],\dots,x[i+a])$. The quantity $d=a-m$ is the \emph{diameter} of the local rule $f$. If $X=Y$, we say that $F$ is a \emph{cellular automaton} (CA). If we can choose $f$ so that $-m=a=r\geq 0$, we say that $F$ is a radius-$r$ CA.
\end{definition}

By Hedlund's theorem~\cite{Hed69} a map $F:X\to Y$ is a morphism if and only if it is continuous and $F\circ\sigma=\sigma\circ F$. We say that subshifts $X\subseteq A^\Z$ and $Y\subseteq B^\Z$ are \emph{conjugate} if there is a bijective morphism (a conjugacy) $F:X\to Y$. Bijective CA are called either \emph{reversible CA} or \emph{automorphisms}. The set of all automorphisms of $X$ is a group denoted by $\aut(X)$.

\begin{remark}
Technically it does not make any difference whether an element $F\in \aut(X)$ is called a reversible CA or an automorphism. In this paper we will make a distinction based on the \emph{role} the map $F$ plays in a given context. If we think of $F$ as forming a dynamical system, i.e. we are interested in repeated iteration of the map $F$ on the points of $X$, then we say that $F$ is a cellular automaton. If on the other hand it is natural to think of $F$ as an element of $\aut(X)$, e.g. if we are interested in the totality of the action of some larger group $\grp{G}\subseteq\aut(X)$ containing $F$, then we say that $F$ is an automorphism. Sometimes this distinction is a bit blurry.
\end{remark}

The notions of almost equicontinuity and sensitivity can be defined for general topological dynamical systems. We omit the topological definitions, because for cellular automata on transitive subshifts there are combinatorial characterizations for these notions using blocking words.

\begin{definition}
Let $F:X\to X$ be a radius-$r$ CA and $w\in L(X)$. We say that $w$ is a \emph{blocking word} if there is an integer $e$ with $\abs{w}\geq e\geq r+1$ and an integer $p\in[0,\abs{w}-e]$ such that
\[\forall x,y\in\cyl_X(w,0), \forall n\in\N, F^n(x)[p,p+e-1]=F^n(y)[p,p+e-1].\]
\end{definition}

The following is proved in Proposition~2.1 of~\cite{Sab08}.

\begin{proposition}\label{equiblock}
If $X$ is a transitive subshift and $F:X\to X$ is a CA, then $F$ is almost equicontinuous if and only if it has a blocking word.
\end{proposition}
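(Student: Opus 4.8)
The plan is to prove both directions from the single geometric fact that information in a radius-$r$ CA propagates at speed at most $r$, so that an interval of cells on which $F^n(x)$ and $F^n(y)$ agree for all $n\in\N$ acts as a barrier to that information as soon as the interval has length at least $r+1$. The first thing I would record is the following barrier lemma: if $x,y\in X$, $[a,b]$ is an interval with $b-a\geq r$, $F^n(x)[a,b]=F^n(y)[a,b]$ for all $n\in\N$, and $x$ agrees with $y$ on $[b+1,\infty)$, then $F^n(x)$ and $F^n(y)$ agree on $[a,\infty)$ for all $n$; this is a one-line induction on $n$, since for $j>b$ the value $F^{n+1}(x)[j]$ depends only on $F^n(x)[j-r,j+r]\subseteq[a,\infty)$. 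I would also state the mirror version (barrier on the right) and the two-barrier combination: if $[a_1,b_1]$ and $[a_2,b_2]$ are intervals with $b_1<a_2$, both of length at least $r+1$, on each of which $F^n(x)$ and $F^n(y)$ agree for all $n$, and $x$ agrees with $y$ on $[a_1,b_2]$ at time $0$, then $F^n(x)$ and $F^n(y)$ agree on $[a_1,b_2]$ for all $n$; the induction is identical.

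For the implication ``blocking word $\Rightarrow$ almost equicontinuous'', fix a blocking word $w$ with parameters $e\geq r+1$ and $p\in[0,\abs{w}-e]$. Using transitivity I would first produce a point $x\in X$ in which $w$ occurs at positions unbounded both above and below---for instance a point in which every word of $L(X)$ occurs at infinitely many positive and infinitely many negative positions, which is obtained from the transitivity hypothesis by a routine diagonal construction. Each occurrence $i$ of $w$ in $x$ provides a ``frozen'' window $[i+p,i+p+e-1]$, of length $e\geq r+1$, on which all $F^n$ agree across all points of $\cyl_X(w,i)$. Given $L\in\N$, I would pick an occurrence $i^-$ with $i^-+p+e-1<-L$ and an occurrence $i^+$ with $i^++p>L$, and then choose $N$ large enough that $[-N,N]$ contains $[i^-,i^++\abs{w}-1]$. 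If $y\in X$ satisfies $y[-N,N]=x[-N,N]$, then $w$ occurs in $y$ at $i^-$ and at $i^+$ as well, so the two frozen windows are active, $[-L,L]$ lies in the enclosed interval $[i^-+p,i^++p+e-1]$, and the two-barrier lemma gives $F^n(y)[-L,L]=F^n(x)[-L,L]$ for all $n$. Hence $x$ is an equicontinuity point and $F$ is almost equicontinuous.

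For the converse, suppose $F$ is almost equicontinuous and let $x_0$ be an equicontinuity point; unwinding the definition (and enlarging $N$ to be $\geq r$ if needed) there is $N$ such that $y[-N,N]=x_0[-N,N]$ implies $F^n(y)[-r,r]=F^n(x_0)[-r,r]$ for all $n\in\N$. I claim $w_0:=x_0[-N,N]$ is a blocking word with $e=2r+1$ and $p=N-r$; the required inequalities $\abs{w_0}=2N+1\geq e\geq r+1$ and $0\leq p\leq\abs{w_0}-e$ all reduce to $N\geq r$. Indeed, if $x,y\in\cyl_X(w_0,0)$ then $\sigma^N(x)$ and $\sigma^N(y)$ both agree with $x_0$ on $[-N,N]$, hence $F^n(\sigma^N(x))$ and $F^n(\sigma^N(y))$ both agree with $F^n(x_0)$ on $[-r,r]$; using $F^n\circ\sigma=\sigma\circ F^n$ and translating back by $\sigma^{-N}$ gives $F^n(x)[N-r,N+r]=F^n(y)[N-r,N+r]$, which is exactly the blocking condition for $(e,p)=(2r+1,N-r)$. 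This direction uses neither transitivity nor soficness.

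The only real obstacle I anticipate is the bookkeeping in the first direction: one has to choose $N$ \emph{after} the two barrier windows (so that $y[-N,N]=x[-N,N]$ genuinely activates them), make sure both windows lie strictly outside $[-L,L]$ so that $[-L,L]$ sits between them, and check that the constraints $e\geq r+1$ and $p\leq\abs{w}-e$ are used precisely where the two-sided induction needs them. Extracting the recurrent point $x$ from transitivity is standard, and is the one step where transitivity---rather than mere soficness or infiniteness---is actually used.
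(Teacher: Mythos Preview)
The paper does not give its own proof of this proposition; it is quoted from Sablik~\cite{Sab08} (Proposition~2.1 there), so there is nothing in the paper to compare against. Your argument is the standard one (essentially K\r{u}rka's) and is correct in both directions.

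One small point to tighten in the forward direction: exhibiting a single equicontinuity point $x$ does not by itself yield almost equicontinuity, which asks for a residual set of equicontinuity points. Your barrier argument, however, applies verbatim to \emph{every} point in which $w$ occurs at positions unbounded above and below, and in a transitive subshift that set is a dense $G_\delta$: each $\bigcup_{i\geq n}\cyl_X(w,i)$ is open, and it is dense because transitivity lets you extend any cylinder word to the right by some $vw$ with $v$ as long as you like (and symmetrically on the left). So you should state that the set of such points is residual rather than stopping at ``$x$ is an equicontinuity point''. Alternatively, simply take $x$ to be a $\sigma$-transitive point---it automatically has $w$ at unbounded positions on both sides---and then observe that the set of equicontinuity points is $\sigma$-invariant (since $F\circ\sigma=\sigma\circ F$) and a $G_\delta$, hence contains the dense orbit of $x$ and is residual. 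The converse direction is clean as written, and you are right that it uses no transitivity.
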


We say that a CA on a transitive subshift is \emph{sensitive} if it is not almost equicontinuous. The notion of sensitivity is refined by Sablik's framework of directional dynamics~\cite{Sab08}.

\begin{definition}
Let $F:X\to X$ be a cellular automaton and let $p,q\in\Z$ be coprime integers, $q>0$. Then $p/q$ is a \emph{sensitive direction} of $F$ if $\sigma^p\circ F^q$ is sensitive. Similarly, $p/q$ is an \emph{almost equicontinuous direction} of $F$ if $\sigma^p\circ F^q$ is almost equicontinuous.
\end{definition}

This definition is best understood via the \emph{space-time diagram} of $x\in X$ with respect to $F$, in which successive iterations $F^t(x)$ are drawn on consecutive rows (see Figure~\ref{shift} for a typical space-time diagram of a configuration with respect to the shift map). By definition $-1=(-1)/1$ is an almost equicontinuous direction of $\sigma:A^\Z\to A^\Z$ because $\sigma^{-1}\circ \sigma=\id$ is almost equicontinuous. This is directly visible in the space-time diagram of Figure~\ref{shift}, because it looks like the space-time diagram of the identity map when it is followed along the dashed line. Note that the slope of the dashed line is equal to $-1$ with respect to the vertical axis extending downwards in the diagram.

\begin{figure}[ht]
\centering
\includegraphics{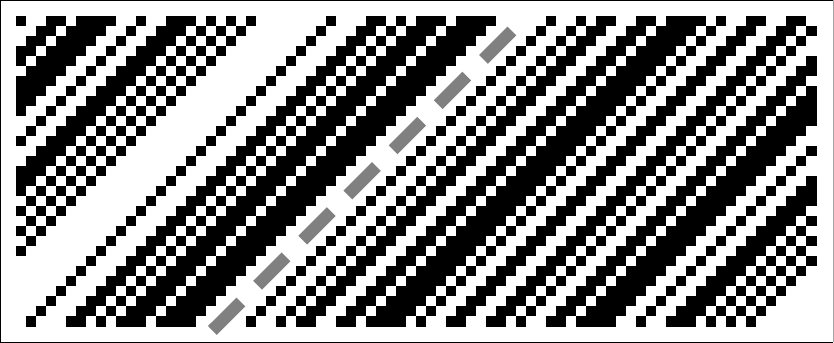}
\caption{A space-time diagram of the binary shift map $\sigma$. White and black squares correspond to digits $0$ and $1$ respectively. The dashed line shows an almost equicontinuous direction.}
\label{shift}
\end{figure}

\section{Markers on synchronizing subshifts}\label{markerSect}

In this section we find a collection of marker words of suitable form in any infinite synchronizing subshift. The precise result is stated in Propositions~\ref{0prim} and~\ref{01} and it may be of independent interest. The markers with good properties are found by transforming the subshift via a sequence of conjugacies through multiple lemmas.

\begin{lemma}\label{synborder}Let $X$ be a subshift and $u,v\in L(X)$ synchronizing words. If $w_1,w_2\in L(X)$ are words both of which have $u$ as a prefix and $v$ as a suffix, then $\syn_X(w_1)=\syn_X(w_2)$.\end{lemma}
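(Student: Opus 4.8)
The goal is to show $\syn_X(w_1)=\syn_X(w_2)$, which by definition means $\cont_X(w_1)=\cont_X(w_2)$; by symmetry of the hypotheses in $w_1$ and $w_2$, it suffices to prove the inclusion $\cont_X(w_1)\subseteq\cont_X(w_2)$. So I would fix an arbitrary pair $(a,b)\in\cont_X(w_1)$, i.e. $aw_1b\in L(X)$, and aim to deduce $aw_2b\in L(X)$. The plan is to "rebuild" the word $aw_2b$ inside $L(X)$ by replacing $w_1$ with $w_2$ in two stages, first matching the common prefix $u$ and then the common suffix $v$, using the synchronizing property of $u$ and of $v$ respectively.

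Concretely, write $w_1=us_1=t_1v$ and $w_2=us_2=t_2v$ for suitable words $s_1,t_1,s_2,t_2$ (possible since each $w_i$ has $u$ as a prefix and $v$ as a suffix). From $aw_1b\in L(X)$ I get that the prefix $au$ and the suffix $vb$ of $aw_1b$ lie in $L(X)$. First step: since $au\in L(X)$ and $us_2=w_2\in L(X)$ and $u$ is synchronizing, the definition of synchronizing word gives $aus_2=aw_2\in L(X)$. Second step: rewrite $aw_2=at_2v$; then $at_2v\in L(X)$ and $vb\in L(X)$ and $v$ is synchronizing, so again by the definition $at_2vb=aw_2b\in L(X)$. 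Hence $(a,b)\in\cont_X(w_2)$, which gives $\cont_X(w_1)\subseteq\cont_X(w_2)$, and by symmetry equality, so $\syn_X(w_1)=\syn_X(w_2)$.

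I do not anticipate a serious obstacle here: the only points that need a line of care are the degenerate cases where $a$, $b$, or some of the connecting pieces $s_i,t_i$ are the empty word $\lambda$ (the argument above goes through verbatim since $\lambda w=w$), and the trivial remark that a prefix or suffix of a word in $L(X)$ is again in $L(X)$, so that $au,vb\in L(X)$ as claimed. The essential content is simply the two successive applications of the synchronizing property, one on the left boundary and one on the right.
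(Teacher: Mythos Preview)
Your proof is correct and follows essentially the same approach as the paper: two successive applications of the synchronizing property (first with $u$ on the left, then with $v$ on the right) to pass from $aw_1b\in L(X)$ to $aw_2b\in L(X)$, then an appeal to symmetry. The only cosmetic difference is that you explicitly name the decompositions $w_i=us_i=t_iv$, whereas the paper leaves them implicit.
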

\begin{proof}Let $t_1,t_2\in L(X)$ be such that $t_1w_1t_2\in L(X)$. In particular $t_1u\in L(X)$ and by assumption $w_2\in L(X)$, so by using the fact that $u$ is synchronizing it follows that $t_1w_2\in L(X)$. We also know that $vt_2\in L(X)$, so by using the fact that $v$ is synchronizing it follows that $t_1w_2t_2\in L(X)$. By symmetry, from $t_1w_2t_2\in L(X)$ it would follow that $t_1w_1t_2\in L(X)$, which proves the lemma.\end{proof}

\begin{definition}
Given a subshift $X\subseteq A^\Z$, we say that $w\in L(X)$ has a unique successor in $X$ (resp. a unique predecessor) if $wa\in L(X)$ (resp. $aw\in L(X)$) for a unique $a\in A$. Then we say that $a$ is the unique successor (resp. the unique predecessor) of $w$.\end{definition}

\begin{definition}
Let $X\subseteq A^\Z$ be a subshift and let $w=w_1\cdots w_n\in L(X)$ with all $w_i\in\ A$ distinct. If $w_i$ have unique successors for $1\leq i<n$, we say that $w$ is future deterministic in $X$ and if $w_j$ have unique predecessors for $1<j\leq n$, we say that $w$ is past deterministic in $X$. If $w$ is both future and past deterministic in $X$, we say that $w$ is deterministic in $X$.\end{definition}

Determinism of $w$ means that any symbol $a$ occurring in $w$ can occur in $x\in X$ only within an occurrence of $w$. If $X$ is infinite and transitive, then it is easy to see that the determinism of $w$ implies that all the symbols of $w$ are distinct.

\begin{lemma}\label{primeconj}
Let $X\subseteq A^\Z$ be a subshift and let $A'=\{a'\mid a\in A\}$. If $\psi:X\to X'\subseteq(A\cup A')^\Z$ is a surjective morphism and for all $x\in X$, $i\in \Z$, $a\in A$ it holds that $\psi(x)[i]\in\{a,a'\}\implies x[i]=a$ (i.e. $\psi$ does nothing else in configurations than add some primes as superscripts), then $\psi$ is a conjugacy. Furthermore, let $w=w_1\cdots w_n\in L(X)\cap L(X')$ and $w'=w_1'\cdots w_n'$. Then also the following hold.
\begin{itemize}
\item Assume that $w_i w_{i+1}',w_i' w_{i+1}\notin L(X')$ for $1\leq i<n$. If $w$ is future (resp. past) deterministic in $X$, then $w$ is future (resp. past) deterministic also in $X'$. 
\item Assume that $w$ is a synchronizing word for $X$ which is \emph{blocking} with respect to $\psi$ in the sense that for all $x,y\in\cyl_X(w,0)$,
\begin{flalign*}
& x[0,\infty]=y[0,\infty]\implies \psi(x)[n,\infty]=\psi(y)[n,\infty] \mbox{ and }\\
& x[-\infty,n-1]=y[-\infty,n-1]\implies \psi(x)[-\infty,-1]=\psi(y)[-\infty,-1]
\end{flalign*}
and whose \emph{priming is determined} either \emph{from the right} or \emph{from the left} in the sense that either
\begin{flalign*}
\forall x,y\in\cyl_X(w,0):\mbox{ }&x[0,\infty]=y[0,\infty] \\
&\implies \psi(x)[0,n-1]=\psi(y)[0,n-1] \mbox{ or }\\
\forall x,y\in\cyl_X(w,0):\mbox{ }&x[-\infty,n-1]=y[-\infty,n-1] \\
&\implies \psi(x)[0,n-1]=\psi(y)[0,n-1]
\end{flalign*}
respectively. Then $w$ is a synchronizing word for $X'$.
\end{itemize}

\end{lemma}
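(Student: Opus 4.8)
The plan is to establish the three assertions in turn. \textbf{The conjugacy claim} I would dispatch first. Let $\pi\colon(A\cup A')^\Z\to A^\Z$ be the $1$-block morphism erasing all primes. The stated property of $\psi$ is precisely that $\pi\circ\psi=\id_X$, so $\psi$ is injective; being surjective by hypothesis, it is a bijection whose inverse $\pi|_{X'}$ is a morphism, hence a conjugacy. I would also record here the observation used throughout: since $\psi$ only adds primes, $x[i]=\overline{\psi(x)[i]}$ for all $x\in X$ and $i\in\Z$ (the bar denoting unpriming), so the unpriming $\bar s$ of any $s\in L(X')$ lies in $L(X)$, and an occurrence of the all‑unprimed word $w$ in some $\psi(x)$ forces the same occurrence of $w$ in $x$.

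\textbf{For the transfer of determinism}, assume $w$ is future deterministic in $X$ and $w_iw_{i+1}',w_i'w_{i+1}\notin L(X')$ for $1\le i<n$. Fix $i<n$; since $w_iw_{i+1}\in L(X')$ it suffices to show $w_i$ has no further successor in $X'$. If $w_ia\in L(X')$ with $a\in A\cup A'$, then $w_i\bar a\in L(X)$, so $\bar a=w_{i+1}$ by future determinism in $X$, leaving $a\in\{w_{i+1},w_{i+1}'\}$, and $a=w_{i+1}'$ is excluded by $w_iw_{i+1}'\notin L(X')$. Hence $w$ is future deterministic in $X'$. The past‑deterministic case is symmetric: a spurious predecessor $w_{j-1}'$ of $w_j$ (for $1<j\le n$) is ruled out by $w_i'w_{i+1}\notin L(X')$ with $i=j-1$.

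\textbf{The transfer of synchronization is the crux.} Given $u,v\in L(X')$ with $uw,wv\in L(X')$, I must exhibit an occurrence of $uwv$ in a point of $X'$. Using $\psi(X)=X'$ I would first choose, after suitable shifts, $x_1\in X$ with $\psi(x_1)[-\abs{u},n-1]=uw$ and $x_2\in X$ with $\psi(x_2)[0,n+\abs{v}-1]=wv$; then $x_1,x_2\in\cyl_X(w,0)$ with $x_1[-\abs{u},-1]=\bar u$ and $x_2[n,n+\abs{v}-1]=\bar v$. Let $z$ be defined by $z[-\infty,n-1]=x_1[-\infty,n-1]$ and $z[0,\infty]=x_2[0,\infty]$ (consistent on $[0,n-1]$, where both equal $w$). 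Because $w$ is synchronizing in $X$, every subword of $z$ lies in $L(X)$ — one meeting both $z[-\infty,-1]$ and $z[n,\infty]$ contains the central $w$ and factors as (a suffix of $x_1$ ending in $w$)(a prefix of $x_2$ beginning with $w$), whose concatenation is in $L(X)$ by the synchronizing property — so $z\in X$ and $z\in\cyl_X(w,0)$, with $z[-\abs{u},-1]=\bar u$ and $z[n,n+\abs{v}-1]=\bar v$. Now I would apply the hypotheses on $w$ to the pairs $(z,x_1)$ and $(z,x_2)$: since $z$ agrees with $x_1$ on $[-\infty,n-1]$, the second blocking condition gives $\psi(z)[-\infty,-1]=\psi(x_1)[-\infty,-1]$, so $\psi(z)[-\abs{u},-1]=u$; since $z$ agrees with $x_2$ on $[0,\infty]$, the first blocking condition gives $\psi(z)[n,\infty]=\psi(x_2)[n,\infty]$, so $\psi(z)[n,n+\abs{v}-1]=v$; and the assumption that the priming of $w$ is determined from the right (applied to $(z,x_2)$) — or from the left (applied to $(z,x_1)$) — gives $\psi(z)[0,n-1]=w$. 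Therefore $\psi(z)[-\abs{u},n+\abs{v}-1]=uwv$, so $uwv\in L(X')$, and $w$ is synchronizing in $X'$.

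I expect the only genuinely delicate point to be the claim $z\in X$ — the standard fact that two configurations may be glued along a shared occurrence of a synchronizing word — together with keeping careful track of positions so that the blocking and ``priming‑determined'' hypotheses get applied to the correct pairs of configurations; the remainder is routine unpriming bookkeeping.
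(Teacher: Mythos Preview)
Your proof is correct and follows essentially the same route as the paper's. The only cosmetic difference is in the synchronization part: the paper verifies the equivalent gluing characterization (for $x_1',x_2'\in X'$ with $w$ at the origin, show $x_1'\glue x_2'\in X'$) by taking preimages $x_1,x_2$ and setting $y=x_1\glue x_2$, whereas you verify the word-based definition directly by choosing $x_1,x_2$ so that $\psi(x_1),\psi(x_2)$ realize $uw,wv$ and then form the same glued point $z$; in both cases the key step is identical --- glue in $X$ via the synchronizing word, then read off $\psi$ of the glued point using blocking and priming-determination.
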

\begin{proof}
To see that $\psi$ is a conjugacy it suffices to show that $\psi$ is injective, but this is obvious.

Now assume that $w$ satisfies the assumption in the first item and that $w$ is future deterministic in $X$. We show that $w$ is future deterministic in $X'$. To see that $w_i$ ($1\leq i< n$) has a unique successor in $X'$, let $x\in X$ be such that $\psi(x)[0]=w_i$. Then also $x[0]=w_i$ and since $w$ is future deterministic in $X$ it follows that $x[0,1]=w_i w_{i+1}$ and $\psi(x)[0,1]\in\{w_iw_{i+1},w_iw_{i+1}'\}$. Since by assumption $w_i w_{i+1}'\notin L(X')$, it follows that $\psi(x)[0,1]=w_iw_{i+1}$ and $w_{i+1}$ is the unique successor of $w_i$ in $X'$. The proof for past determinism is symmetric.

Now assume that $w$ is a synchronizing word which is blocking and whose priming is determined from the right. Assume that $x_1',x_2'\in X'$ both have an occurrence of $w$ at the origin. To see that $w$ is a synchronizing word of $X'$, we need to show that $x_1'\glue x_2'$ (the gluing of $x_1'$ and $x_2'$ at the origin) belongs to $X'$. Let therefore $x_1,x_2\in X$ be such that $\psi(x_i)=x_i'$, so in particular both $x_i$ have an occurrence of $w$ at the origin. Since $w$ is synchronizing in $X$ it follows that $y=x_1\glue x_2\in X$. In other words $y\in\cyl_X(w,0)$, $x_1[-\infty,n-1]=y[-\infty,n-1]$ and $x_2[0,\infty]=y[0,\infty]$. Since $y$ is blocking, it follows that $\psi(y)[n,\infty]=\psi(x_2)[n,\infty]=x_2'[n,\infty]$ and $\psi(y)[-\infty,-1]=\psi(x_1)[-\infty,-1]=x_1'[-\infty,-1]$. Since the priming of $w$ is determined from the right, it follows that $\psi(y)[0,n-1]=\psi(x_2)[0,n-1]=x_2'[0,n-1]=w$. In total, $x_1'\glue x_2'=\psi(y)\in X'$. The case where the priming of $w$ is determined from the left is similar.\end{proof}

\begin{lemma}
Let $X\subseteq A^\Z$ be a subshift and let $A'=\{a'\mid a\in A\}$. Given $w=w_1\cdots w_n\in L(X)$ with all $w_i\in A$ distinct there is a conjugacy $\psi:X\to X'\subseteq (A\cup A')^\Z$ such that $w\in L(X')$ and $w$ is future deterministic in $X'$. Moreover, if $w^\Z\in X$ then $w^\Z\in X'$, and if $w$ is a synchronizing word of $X$ then $w$ is a synchronizing word of $X'$.
\end{lemma}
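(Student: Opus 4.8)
\emph{Proof sketch.} The plan is to build $\psi$ by hand as a sliding block code that introduces primes in exactly the right places. First I would define, for $x\in X$ and $j\in\Z$: $\psi(x)[j]=x[j]$ if $x[j]\notin\{w_1,\dots,w_n\}$, or if $x[j]=w_i$ and $x[j-i+1,\,j-i+n]=w$, where $i$ is the unique index with $w_i=x[j]$; and $\psi(x)[j]=(x[j])'$ otherwise. In words: a letter $w_i$ is left unprimed precisely when it sits in $x$ as the $i$-th letter of a \emph{full} occurrence of $w$, every other occurrence of a letter of $w$ is primed, and letters outside $w$ are untouched. Since $w_1,\dots,w_n$ are distinct, the index $i$ is recovered from the single symbol $x[j]$, so whether position $j$ gets primed depends only on $x[j-(n-1),\,j+(n-1)]$; hence $\psi$ is a morphism of $X$ onto $X':=\psi(X)\subseteq(A\cup A')^\Z$, it only ever adds primes, and it is injective, so Lemma~\ref{primeconj} gives at once that $\psi\colon X\to X'$ is a conjugacy.

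Next I would record the elementary observation that $\psi$ restricts to the identity on every occurrence of $w$: if $x[0,n-1]=w$, then for each $0\le j\le n-1$ the symbol $x[j]=w_{j+1}$ occurs as the $(j+1)$-st letter of this full occurrence and is therefore unprimed, so $\psi(x)[0,n-1]=w$. This immediately yields $w\in L(X')$, and, applied to $w^\Z$ itself, gives $\psi(w^\Z)=w^\Z$, so $w^\Z\in X'$ whenever $w^\Z\in X$. The real content is future determinism, and I would argue it directly: given $i<n$ and a letter $b\in A\cup A'$ with $w_ib\in L(X')$, lift to $z\in X$ with $\psi(z)[0]=w_i$ (unprimed) and $\psi(z)[1]=b$; unprimedness of $\psi(z)[0]$ forces, through the defining rule, $z[1-i,\,n-i]=w$, hence $z[1]=w_{i+1}$ (using $i<n$), and then the rule applied with index $i+1$ shows $\psi(z)[1]=w_{i+1}$ is unprimed, so $b=w_{i+1}$. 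Thus $w_{i+1}$ is the unique successor of $w_i$ in $X'$ for every $i<n$, i.e. $w$ is future deterministic in $X'$. I expect this to be the one place where care is genuinely needed: the rule must test for a \emph{full} occurrence of $w$ and not merely for the prefix $w_1\cdots w_i$, and that design choice is the main (indeed essentially the only) obstacle; once it is made, the rest is bookkeeping.

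Finally, to transfer synchronization I would verify the hypotheses of the second item of Lemma~\ref{primeconj} for the given synchronizing word $w$. For any $x\in\cyl_X(w,0)$ the observation above gives $\psi(x)[0,n-1]=w$, so the priming of $w$ is (vacuously) determined both from the right and from the left. For the blocking condition, the window of $\psi$ at position $j$ lies in $[j-(n-1),\,j+(n-1)]$, a radius strictly less than $n$; hence if $x,y\in\cyl_X(w,0)$ agree on $[0,\infty]$ they agree on the window of every $j\ge n$, giving $\psi(x)[n,\infty]=\psi(y)[n,\infty]$, and symmetrically agreement on $[-\infty,n-1]$ forces $\psi(x)[-\infty,-1]=\psi(y)[-\infty,-1]$. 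So $w$ is blocking with respect to $\psi$ and its priming is determined from, say, the right, and Lemma~\ref{primeconj} yields that $w$ is synchronizing in $X'$. Assembling these pieces proves the statement; the only quantitative thing to keep an eye on is that all window radii stay below $n$, which is exactly what makes the "$n$" in the blocking condition and the "$[0,n-1]$" in the priming condition line up.
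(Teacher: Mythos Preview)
Your proof is correct, but your definition of $\psi$ differs from the paper's in an interesting way. The paper primes a letter $w_j$ (for $1\le j<n$) precisely when it is \emph{not} followed by the correct suffix $w_j w_{j+1}\cdots w_n$; in particular $w_n$ is never primed, and the local rule looks only to the right. You instead prime $w_i$ whenever it does not sit as the $i$-th letter of a \emph{full} occurrence of $w$, so your rule looks both left and right. Both constructions satisfy the hypotheses of Lemma~\ref{primeconj} and yield future determinism by the same mechanism (an unprimed $w_i$ forces the surrounding context, which in turn forces $w_{i+1}$ to be unprimed).

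The payoff of your two-sided choice is that your $\psi$ actually makes $w$ fully deterministic (past and future) in one step, so the subsequent lemma in the paper---which takes a future-deterministic $w$ and applies the mirror-image construction to obtain past determinism---becomes unnecessary with your map. The paper's one-sided version is slightly more economical in window size on one side, but that is irrelevant here; your approach simply collapses two lemmas into one. One small wording point: the priming of $w$ is not ``vacuously'' determined from the right; rather, you have shown the stronger fact that $\psi(x)[0,n-1]=w$ for every $x\in\cyl_X(w,0)$, from which the determination follows immediately.
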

\begin{proof}
Let $\psi:X\to (A\cup A')^\Z$ be a morphism defined by
\[
\psi(x)[i]=
\begin{cases}
x[i]' & \mbox{ when } x[i]=w_j \mbox{ and } x[i,i+n-j]\neq w_j w_{j+1}\cdots w_n \\
& \mbox{ for some }1\leq j<n, \\
x[i] &\mbox{ otherwise.}
\end{cases}
\]
By Lemma \ref{primeconj} $\psi$ induces a conjugacy between $X$ and $X'=\psi(X)$. If $x\in X$ contains an occurrence of $w$ at the origin, then $\psi(x)$ also contains an occurrence of $w$ at the origin and $w\in L(X')$. If $w^\Z\in X$, we can here choose $x=w^\Z$ to show that $w^\Z\in X'$. To see that $w_i$ ($1\leq i< n$) has a unique successor in $X'$, assume to the contrary that $w_i a\in L(X')$ for some $a\in (A\cup A')\setminus\{w_{i+1}\}$. Then in particular there is $x\in X$ such that $w_i a$ occurs in $\psi(x)$ at position $0$. But then by definition of $\psi$, $x[0,n-i]=w_i w_{i+1}\cdots w_n$ and $\psi(x)$ contains an occurrence of $w_i w_{i+1}$ at the origin, contradicting the choice of $a$. If $w$ is a synchronizing word of $X$, then from the second item of Lemma \ref{primeconj} it follows that $w$ is a synchronizing word of $X'$ (the priming of $w$ is determined both from the left and from the right).
\end{proof}

\begin{lemma}
Let $X\subseteq A^\Z$ be a subshift and let $A'=\{a'\mid a\in A\}$. Let also $w=w_1\cdots w_n\in L(X)$ with all $w_i\in A$ distinct be such that $w$ is future deterministic in $X$. Then there is a conjugacy $\psi:X\to X'\subseteq (A\cup A')^\Z$ such that $w\in L(X')$ and $w$ is deterministic in $X'$. Moreover, if $w^\Z\in X$ then $w^\Z$ in $X'$, and if $w$ is a synchronizing word of $X$ then $w$ is a synchronizing word of $X'$.
\end{lemma}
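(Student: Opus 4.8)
The plan is to mirror the construction in the previous lemma but reflected left-to-right: there one primes a symbol unless it \emph{begins} a genuine occurrence of a suffix $w_j\cdots w_n$ of $w$; here I would prime a symbol unless it \emph{ends} a genuine occurrence of a prefix $w_1\cdots w_j$ of $w$. Concretely, define $\psi\colon X\to(A\cup A')^\Z$ by
\[
\psi(x)[i]=
\begin{cases}
x[i]' & \text{if } x[i]=w_j \text{ and } x[i-j+1,i]\neq w_1 w_2\cdots w_j \text{ for some } 1<j\le n,\\
x[i] & \text{otherwise.}
\end{cases}
\]
Since $w_1,\dots,w_n$ are distinct, the index $j$ with $x[i]=w_j$ is unique whenever it exists, so $\psi$ is given by a local rule of memory $1-n$ and anticipation $0$ and does nothing but add primes; hence Lemma~\ref{primeconj} gives that $\psi\colon X\to X':=\psi(X)$ is a conjugacy.

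Next I would dispatch the easy assertions. If $x\in X$ has an occurrence of $w$ at the origin, then for each $i$ with $0\le i\le n-1$ we have $x[i]=w_{i+1}$ preceded by exactly $w_1\cdots w_i$, so that symbol is left unprimed; thus $\psi(x)[0,n-1]=w$ and $w\in L(X')$. Choosing $x=w^\Z$ (when $w^\Z\in X$), the same observation shows that no symbol of $w^\Z$ is ever primed, so $w^\Z\in X'$.

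For the core claim that $w$ is deterministic in $X'$, I would get future determinism from the first item of Lemma~\ref{primeconj}, whose hypothesis $w_iw_{i+1}',w_i'w_{i+1}\notin L(X')$ for $1\le i<n$ is exactly where future determinism of $w$ \emph{in $X$} (not merely distinctness of its letters) is needed: an unprimed $w_i$ at a position $k$ of $\psi(x)$ forces $x[k]=w_i$ and $x[k-i+1,k]=w_1\cdots w_i$, hence $x[k+1]=w_{i+1}$ by future determinism, so $x[k-i+1,k+1]=w_1\cdots w_{i+1}$, which makes $\psi(x)[k+1]=w_{i+1}$ unprimed and excludes $w_iw_{i+1}'$; conversely an unprimed $w_{i+1}$ at $k+1$ forces $x[k-i+1,k+1]=w_1\cdots w_{i+1}$, so $x[k-i+1,k]=w_1\cdots w_i$, which makes $w_i$ at position $k$ unprimed and excludes $w_i'w_{i+1}$. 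Past determinism I would prove directly: if $aw_j\in L(X')$ with $1<j\le n$, pick $x\in X$ realizing an unprimed $w_j$ at position $1$; the priming rule forces $x[2-j,1]=w_1\cdots w_j$, whence $x[0]=w_{j-1}$, and reading the rule at $0$ shows $\psi(x)[0]=w_{j-1}$ is unprimed, so $a=w_{j-1}$. Thus $w_{j-1}$ is the unique predecessor of $w_j$ and $w$ is deterministic in $X'$.

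Finally, if $w$ is synchronizing in $X$ I would apply the second item of Lemma~\ref{primeconj}. Since the local rule of $\psi$ only reads a left window of length $n$, for $x,y\in\cyl_X(w,0)$ agreement on $[-\infty,n-1]$ forces agreement of $\psi(x),\psi(y)$ on $[-\infty,-1]$ and on $[0,n-1]$ (indeed $\psi(x)[0,n-1]=w$), while agreement on $[0,\infty]$ forces agreement on $[n,\infty]$; these are precisely the ``blocking'' and ``priming determined from the left'' conditions, so $w$ remains synchronizing in $X'$. The only step demanding genuine care is the exclusion of the hybrid blocks $w_iw_{i+1}'$ and $w_i'w_{i+1}$, where the interaction of the priming rule with future determinism must be tracked carefully; the remainder is routine bookkeeping against the definition of $\psi$ and direct appeals to Lemma~\ref{primeconj}.
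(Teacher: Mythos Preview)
Your proof is correct and follows essentially the same approach as the paper's: the definition of $\psi$ is identical, and the appeals to Lemma~\ref{primeconj} for conjugacy, future determinism, and synchronization match, while your direct argument for past determinism is exactly what the paper means by ``the same argument as in the proof of the previous lemma''. The only minor difference is that the paper notes the priming of $w$ is determined from both the left and the right (since, as you yourself observe, $\psi(x)[0,n-1]=w$ for every $x\in\cyl_X(w,0)$), whereas you only invoke ``from the left''---but either suffices.
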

\begin{proof}
Let $\psi:X\to (A\cup A')^\Z$ be a morphism defined by
\[
\psi(x)[i]=
\begin{cases}
x[i]' & \mbox{ when } x[i]=w_j \mbox{ and } x[i-j+1,i]\neq w_1 w_2\cdots w_j \\
& \mbox{ for some }1<j\leq n, \\
x[i] & \mbox{ otherwise.}
\end{cases}
\]
By Lemma \ref{primeconj} $\psi$ induces a conjugacy between $X$ and $X'=\psi(X)$. If $x\in X$ contains an occurrence of $w$ at the origin, then $\psi(x)$ also contains an occurrence of $w$ at the origin and $w\in L(X')$. If $w^\Z\in X$, we can here choose $x=w^\Z$ to show that $w^\Z\in X'$. The first item in Lemma \ref{primeconj} applies to show that $w$ is future deterministic in $X'$, and the same argument as in the proof of the previous lemma shows that $w$ is past deterministic. If $w$ is a synchronizing word of $X$, then from the second item of Lemma \ref{primeconj} it follows that $w$ is a synchronizing word of $X'$ (the priming of $w$ is determined both from the left and from the right).
\end{proof}

\begin{lemma}
Let $X\subseteq A^\Z$ be a subshift and let $w=w_1\cdots w_n\in L(X)$ with all $w_i$ distinct. There is an alphabet $B\supseteq A$ and a subshift $X'\subseteq B^\Z$ which is conjugate to $X$ such that $w\in L(X')$ and $w$ is deterministic in $X'$. Moreover, if $w^\Z\in X$ then $w^\Z\in X'$, and if $w$ is a synchronizing word of $X$ then it is also a synchronizing word of $X'$.
\end{lemma}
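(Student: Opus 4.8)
The plan is to obtain $X'$ by simply composing the conjugacies supplied by the two lemmas immediately preceding this one. First I would apply the first of those two lemmas to $X$ and $w$: since $w=w_1\cdots w_n\in L(X)$ and $X\subseteq A^\Z$, the letters $w_1,\dots,w_n$ are pairwise distinct elements of $A$, so the hypotheses are met, and we obtain a conjugacy $\psi_1:X\to X_1\subseteq(A\cup A')^\Z$ with $w\in L(X_1)$ and $w$ future deterministic in $X_1$, together with the two ``moreover'' clauses: $w^\Z\in X_1$ whenever $w^\Z\in X$, and $w$ synchronizing in $X_1$ whenever $w$ is synchronizing in $X$. The symbols $w_1,\dots,w_n$ are still pairwise distinct letters of the enlarged alphabet $A_1:=A\cup A'$, and $w$ is now future deterministic in $X_1$, so the hypotheses of the second of those two lemmas hold for $X_1$. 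Applying it yields a conjugacy $\psi_2:X_1\to X_2\subseteq(A_1\cup A_1')^\Z$ with $w\in L(X_2)$ and $w$ \emph{deterministic} (both future and past) in $X_2$, again transporting membership of $w^\Z$ and synchronizingness of $w$.

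Then I would set $B:=A_1\cup A_1'=(A\cup A')\cup(A\cup A')'$, which contains $A$; put $X':=X_2\subseteq B^\Z$ and $\psi:=\psi_2\circ\psi_1$. A composition of conjugacies is a conjugacy, so $\psi:X\to X'$ witnesses that $X'$ is conjugate to $X$, $w\in L(X')$, and $w$ is deterministic in $X'$. For the ``moreover'' statements one just chains the implications already obtained: if $w^\Z\in X$ then $w^\Z\in X_1$, hence $w^\Z\in X_2=X'$; and if $w$ is synchronizing in $X$ then it is synchronizing in $X_1$, hence in $X'=X_2$.

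I do not expect a genuine obstacle here; this lemma is essentially a bookkeeping packaging of the previous two. The only points needing (routine) attention are that the symbols of $w$ remain pairwise distinct letters of each successively enlarged alphabet --- immediate, since they lie in $A$ throughout --- and that passing through $\psi_2$ does not destroy the future determinism gained from $\psi_1$. The latter is automatic because the second of the two preceding lemmas is stated so as to deliver \emph{full} determinism of $w$ in its output subshift; alternatively, one can observe that its local rule primes a symbol only when it fails to be correctly preceded by the appropriate prefix of $w$, so occurrences of $w$ itself are left untouched and future determinism is preserved alongside the newly acquired past determinism.
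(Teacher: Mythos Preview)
Your proposal is correct and follows exactly the approach of the paper, whose proof is the single sentence ``This follows by applying the two previous lemmas.'' Your additional bookkeeping remarks (distinctness of the $w_i$ in the enlarged alphabets, composition of conjugacies, chaining the ``moreover'' clauses) are all sound and simply make explicit what the paper leaves to the reader.
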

\begin{proof}
This follows by applying the two previous lemmas.
\end{proof}

\begin{definition}
The $n$-th higher power shift $X^{[n]}$ of a subshift $X\subseteq A^\Z$ is the image of $X$ under the map $\beta_n(x):X\to (A^n)^\Z$ defined by $\beta_n(x)[i]=x[i-k,i-k+n-1]$ (where $k=\lfloor n/2\rfloor$) for all $x\in X$, $i\in\N$. All higher power shifts are conjugate to the original subshift.
\end{definition}

We are now ready to present the main propositions of this section.

\begin{proposition}\label{0prim}
Let $X\subseteq A^\Z$ be a synchronizing subshift and let $\z\in L(X)$ be such that $\z^\Z\in X$, the minimal period of $\z^\Z$ is $\abs{\z}$ and $\z^k$ is synchronizing for some $k\in\Npos$. Up to recoding to a conjugate subshift we may assume that $\z$ is deterministic and synchronizing and that all symbols of $\z$ are distinct.
\end{proposition}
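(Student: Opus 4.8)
The plan is to assemble the ``priming'' lemmas proved above with a passage to a suitable higher power shift. Observe that the last lemma preceding the definition of the higher power shift already does most of the work: from a word with pairwise distinct symbols it produces a conjugate subshift in which that word is deterministic, and it preserves both the relation $\z^\Z\in X$ and the property of being synchronizing. So the real task is to first recode $X$ to a conjugate subshift carrying a word $w$ (which will play the role of $\z$) such that $w$ has pairwise distinct symbols \emph{and $w$ itself is synchronizing}. The second requirement is the delicate one, since we are only given that $\z^k$ is synchronizing for some $k$, not $\z$ itself; passing to a high enough power shift repairs both deficiencies at once.

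Concretely, I would set $n=\abs{\z}$, fix $k\in\Npos$ with $\z^k$ synchronizing, put $N=nk$, and pass to $Y=X^{[N]}$, which is conjugate to $X$ via $\beta_N$. Since $\beta_N$ is an injective, shift-commuting map, the point $\beta_N(\z^\Z)$ has the same minimal period $n$ as $\z^\Z$, so it equals $w^\Z$ for a well-defined length-$n$ word $w$ over $A^N$ with $w^\Z\in Y$. Then I would verify two things in $Y$. First, that all symbols of $w$ are distinct: they are $N$-windows of $\z^\Z$ read at $n$ consecutive positions, and if two of them coincided then $\z^\Z$ would agree with a nontrivial shift of itself on a window of length $N\ge n$, forcing a period strictly smaller than $n$. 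Second, that $w$ is synchronizing in $Y$: the natural dictionary identifies a word $W_1\cdots W_\ell\in L(Y)$ with the word of $L(X)$ of length $\ell+N-1$ obtained by overlapping its blocks, with concatenation in $L(Y)$ corresponding to overlapping the associated words of $L(X)$ along their common length-$(N-1)$ part; the word $z^*\in L(X)$ associated with $w$ is a factor of $\z^\Z$ whose length $n+N-1$ is, for $N=nk$, just large enough to contain a block-aligned copy of $\z^k$, so $z^*=s\z^k p$ for some $s,p\in L(X)$, and since $\z^k$ is synchronizing it follows straight from the definitions that $z^*$ is synchronizing in $X$; transporting this fact through the dictionary then gives that $w$ is synchronizing in $Y$.

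Finally I would apply the last lemma before the higher power shift definition to the pair $(Y,w)$, obtaining a conjugate subshift $X'$ in which $w$ is deterministic while $w^\Z\in X'$, $w$ is synchronizing in $X'$, all symbols of $w$ remain distinct, and $w^\Z$ still has minimal period $n=\abs{w}$ (conjugacies preserve minimal periods); since a synchronizing word has all of its powers synchronizing, every hypothesis on $\z$ is retained. Renaming $X'$ as $X$ and $w$ as $\z$ then yields the statement. The only step I expect to require real care is the synchronization claim in the higher power shift — setting up the correspondence between $L(X^{[N]})$ and the long factors of $X$ precisely enough to see that synchronization of $z^*$ transfers to $w$ — but since $\beta_N$ and its inverse are very simple block maps, this should be routine bookkeeping rather than a genuine obstacle.
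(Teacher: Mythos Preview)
Your proposal is correct, and its overall architecture---higher power shift, then the determinism lemma---matches the paper. Your synchronization argument via the block-code dictionary is sound: the length-$(n+N-1)$ word $z^*$ of $L(X)$ underlying $w$ is a subword of $\z^\Z$ of length $nk+n-1$, which is indeed the minimal length guaranteeing an aligned copy of $\z^k$ inside, so $z^*$ (hence $w$) is synchronizing.

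There is, however, one genuine difference from the paper's proof. The paper takes the higher power shift only to obtain distinct symbols, then applies the determinism lemma, and at that point it has merely that $\z^k$ is synchronizing, not $\z$ itself. It then performs a \emph{third} recoding: an explicit priming morphism that primes each symbol $0_j$ of $\z$ unless it is followed by $0_j\cdots 0_p\z^{k-1}$, so that in the resulting subshift every unprimed occurrence of $\z$ is in fact the start of an occurrence of $\z^k$; synchronization of $\z$ is then deduced from synchronization of $\z^k$. You sidestep this entire third step by choosing the block length $N=nk$ large enough that $w$ is already synchronizing immediately after the higher power shift, and then you rely on the clause of the determinism lemma that preserves synchronization. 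This is a legitimate shortcut: it trades the paper's additional priming construction for a slightly more careful choice of $N$ and the routine verification that synchronization transports along $\beta_N$. The paper's route has the mild advantage that the synchronization-producing mechanism is displayed concretely rather than hidden inside the block-code correspondence, but your route is shorter.
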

\begin{proof}
Denote $\z=0_1\cdots 0_p$ ($0_i\in A$, $p\in\Npos$). For sufficiently large $n$, $\beta_n(\z^\Z)[0,\abs{\z}-1]$ has all symbols distinct and $\beta_n(\z^\Z)[0,k\abs{\z}-1]=\beta_n(\z^\Z)[0,\abs{\z}-1]^k$ is a synchronizing word of $X^{[n]}$, so up to conjugacy we may assume that the symbols of $\z$ are distinct. By the previous lemma we may assume up to conjugacy that $\z$ is deterministic in $X$.

Let $\psi:X\to (A\cup A')^\Z$ be a morphism defined by
\[
\psi(x)[i]=
\begin{cases}
x[i]' & \mbox{ when } x[i]=0_j \mbox{ and } x[i,i+(p-j)+(k-1)p]\neq 0_j \cdots 0_p\z^{k-1}, \\
x[i] & \mbox{ otherwise.}
\end{cases}
\]
By Lemma \ref{primeconj} $\psi$ induces a conjugacy between $X$ and $X'=\psi(X)$. Clearly $\z^\Z=\psi(\z^\Z)\in X'$, and by Lemma \ref{primeconj} the word $\z$ is deterministic in $X'$. To see that $\z$ is synchronizing, assume that $x_1',x_2'\in X'$ both have an occurrence of $\z$ at the origin. We need to show that $x_1'\glue x_2'$ (the gluing of $x_1'$ and $x_2'$ at the origin) belongs to $X'$. Let therefore $x_1,x_2\in X$ be such that $\psi(x_i)=x_i'$, so in particular both $x_i$ have an occurrence of $\z^k$ at the origin. Since $\z^k$ is synchronizing in $X$ it follows that $y=x_1\glue x_2\in X$, and clearly $x_1'\glue x_2'=\psi(y)\in X'$.
\end{proof}

\begin{proposition}\label{01}
Let $X\subseteq A^\Z$ be an infinite synchronizing subshift and let $\z\in L(X)$ be such that $\z^\Z\in X$, $\z$ is deterministic and synchronizing and all symbols of $\z$ are distinct. Up to recoding to a conjugate subshift we may assume there is a word $\one\in L(X)$, $\abs{\one}\geq 2$, such that $\z$ and $\one$ satisfy the following:
\begin{itemize}
\item $\z^\Z\in X$, $\z$ is deterministic and synchronizing and all symbols of $\z$ are distinct
\item none of the symbols of $\z$ occur in $\one$
\item $\z\one^*\z\subseteq L(X)$
\item $\abs{\one}\equiv K\pmod {\abs{\z}}$ where $K=\gcd(\abs{\z},\abs{\one})$
\item if $w\in L(X)$ is such that $\z w\z\in L(X)$, then $K$ divides $\abs{w}$.
\end{itemize}
\end{proposition}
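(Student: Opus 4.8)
The plan is to produce $\one$, in a suitable conjugate copy $X'$ of $X$, as an explicit word built from $\z$ and from \emph{return words} of $\z$, with the copies of $\z$ occurring inside it relabeled so that $\one$ carries no letter of $\z$. First I would record the combinatorics of returns. Call $u\in L(X)$ a first-return word if $\z u\z\in L(X)$ and the only occurrences of $\z$ in $\z u\z$ are the prefix and the suffix. Since the letters of $\z$ are distinct, $\z$ has no nontrivial self-overlap, and a short argument using determinism shows that a first-return word contains no letter of $\z$ (a letter $\z[i]$ inside $u$ would lie in a third occurrence of $\z$, which can be neither internal nor overlapping the flanking ones). Because $X$ is transitive and infinite there is a nonempty first-return word: if every $w$ with $\z w\z\in L(X)$ were a power of $\z$, then transitivity would surround any word of $L(X)$ by copies of $\z$, forcing $L(X)\subseteq L(\orb{\z^\Z})$ and hence $X=\orb{\z^\Z}$, contradicting infinitude. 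Cutting an arbitrary $w$ with $\z w\z\in L(X)$ at the occurrences of $\z$ writes it as $u_1\z u_2\z\cdots\z u_r$ with the $u_i$ first-return words, so $\abs{w}\equiv\sum_i\abs{u_i}\pmod{\abs{\z}}$; hence, letting $G$ be the greatest common divisor of $\abs{\z}$ and of all first-return-word lengths, we get $G\mid\abs{\z}$ and $G\mid\abs{w}$ for every such $w$. This $G$ will be the $K$ of the statement.

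Next I would make a number-theoretic choice. Let $G'$ be the gcd of the lengths of the nonempty first-return words, so $G=\gcd(\abs{\z},G')$. The additive semigroup generated by those lengths contains all sufficiently large multiples of $G'$, and the congruence $jG'\equiv G\pmod{\abs{\z}}$ is solvable since $\gcd(G',\abs{\z})=G$ divides $G$, with arbitrarily large solutions; so I may fix nonempty first-return words $u_1,\dots,u_m$, repetitions allowed, whose lengths sum to some $\ell\equiv G\pmod{\abs{\z}}$ lying in that semigroup. For such $\ell$ one then has automatically $\gcd(\abs{\z},\ell)=\gcd(\abs{\z},G)=G$.

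Now the recoding. I would define $\psi\colon X\to X':=\psi(X)$ by priming, in each $x\in X$, all letters of every occurrence of $\z$ that is immediately preceded by a letter \emph{not} in $\z$; by the no-self-overlap remark these are exactly the occurrences not immediately following another occurrence of $\z$, so this is well defined occurrence by occurrence, and it is a sliding block code of the form handled by Lemma~\ref{primeconj}, hence a conjugacy. In $\z^\Z$ every occurrence of $\z$ follows another, so nothing is primed and $\z^\Z\in X'$; and since primed letters never lie inside an unprimed occurrence of $\z$, the word $\z$ remains deterministic with distinct letters in $X'$. Put
\[\one:=\psi(u_1\z u_2\z\cdots u_m\z)=u_1\bar{\z}\,u_2\bar{\z}\cdots u_m\bar{\z},\]
with $\bar{\z}$ the primed copy of $\z$ (each $\z$ here follows the last letter of some $u_j$, a non-$\z$ letter, hence is primed). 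Then $\one$ contains no letter of $\z$, $\abs{\one}=\ell+m\abs{\z}\geq 2$, and $\abs{\one}\equiv\ell\equiv G\pmod{\abs{\z}}$ with $K:=\gcd(\abs{\z},\abs{\one})=\gcd(\abs{\z},\ell)=G$, which gives bullets two and four. For $\z\one^*\z\subseteq L(X')$: iterated gluing at occurrences of $\z$ (using that $\z$ is synchronizing in $X$) gives $\z\z\,(u_1\z\cdots u_m\z)^k\,\z\in L(X)$ for every $k\geq 0$, and $\psi$ leaves the two outer $\z$'s unprimed (each follows another $\z$) while priming every $\z$ in the $k$ blocks, so its image contains $\z\one^k\z$. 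Finally $K\mid\abs{w}$ whenever $\z w\z\in L(X')$, because unpriming turns such a $w$ into a $\z$-gap word of $X$ of the same length, which is divisible by $G$.

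The step I expect to be the main obstacle is verifying that $\z$ stays \emph{synchronizing} in $X'$: the priming must not sever synchronization at $\z$. The idea is that an unprimed occurrence of $\z$ in $X'$ is exactly one that in $X$ immediately follows another occurrence of $\z$, so given $s\z,\z t\in L(X')$ the seam $\z$ in the corresponding $\hat{s}\z\hat{t}\in L(X)$ (produced by $\z$ being synchronizing in $X$) is preceded by the last letter of $\z$ and hence stays unprimed under $\psi$, yielding $s\z t\in L(X')$. Making this precise, together with the matching bookkeeping of which occurrences of $\z$ are primed in the words $\z\z(u_1\z\cdots u_m\z)^k\z$ used for $\z\one^*\z$, is where the real care is needed.
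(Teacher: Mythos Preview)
Your argument is correct and follows a genuinely different route from the paper's.

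The paper does not work with first-return words at all. Instead it introduces, for each conjugate $X'$ of $X$ still satisfying the first item, the quantity
\[K(X')=\min\{\gcd(\abs{\z},\abs{w})\mid w,\z w\z\in L(X')\setminus\{\lambda\},\ w\notin A^*\z A^*\},\]
and assumes without loss of generality that $K(X)=\min_{X'}K(X')$. Choosing a single $w$ that realises this minimum, the paper primes only those occurrences of $\z$ that are \emph{immediately preceded by this specific $w$}, and sets $\one=(w\z')^k$ for a suitable $k$. The final bullet is then proved by contradiction: a $v$ with $\z v\z\in L(X)$ and $K\nmid\abs{v}$ would, after a second recoding, produce a conjugate with a strictly smaller $K$-value, violating minimality. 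By contrast, you compute $K=G$ directly as the gcd of $\abs{\z}$ and the first-return lengths, build $\one$ from several first-return words via a Frobenius/Chicken McNugget argument, prime \emph{every} occurrence of $\z$ not immediately following another $\z$, and obtain the last bullet constructively by unpriming and decomposing. Your approach is more explicit and avoids the extremal argument; the paper's approach avoids the number-theoretic semigroup step and needs only a single return word.

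Regarding the step you flag as the main obstacle: synchronization of $\z$ in $X'$ is in fact the easiest part once you observe that your $\psi$ has memory $\abs{\z}$ and anticipation $0$. Then both blocking conditions and ``priming determined from the left'' in Lemma~\ref{primeconj} are immediate (computing $\psi(x)[k]$ requires only $x[k-\abs{\z},k]$), so that lemma applies directly. Your hand argument also works: the key point, which you identified, is that an unprimed $\z$ in $X'$ forces $\z\z$ at that spot in $X$, so after gluing at the seam the preceding letter is still $\z[\abs{\z}]$ and the seam $\z$ stays unprimed. The bookkeeping for determinism is equally routine: consecutive letters $\z[i]\z[i+1]$ always lie in the same occurrence of $\z$ (by determinism and no self-overlap), hence are primed or unprimed together, so $\z[i]\z[i+1]',\z[i]'\z[i+1]\notin L(X')$ and the first item of Lemma~\ref{primeconj} applies.
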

\begin{proof}
For any $X'$ that is conjugate to $X$ and that satisfies the first item it is possible to define the quantity 
\[K(X')=\min\{\gcd(\abs{\z},\abs{w})\mid w,\z w\z\in L(X)\setminus\{\lambda\}, w\notin A^*\z A^*\}.\]
Without loss of generality (up to conjugacy) we may assume in the following that $K(X)=\min_{X'}K(X')$.

There is some $w\in L(X)\setminus\{\lambda\}$ such that $\z w\z\in L(X)$, $\z$ is not a subword of $w$ and $\gcd(\abs{\z},\abs{w})=K(X)$. In the following we fix some such word $w\in L(X)$.

Denote $\z=0_1\cdots 0_p$ ($0_i\in A$, $p\in\Npos$). Let $A'=\{a'\mid a\in A\}$ and let $\psi:X\to (A\cup A')^\Z$ be a morphism defined by
\[
\psi(x)[i]=
\begin{cases}
x[i]' & \mbox{ when } x[i]=0_j \mbox{ and } x[i-j-\abs{w}+1,i]= w0_1 0_2\cdots 0_j \\
x[i] & \mbox{ otherwise.}
\end{cases}
\]
By Lemma \ref{primeconj} $\psi$ induces a conjugacy between $X$ and $X'=\psi(X)$. Clearly $\z^\Z=\psi(\z^\Z)\in X'$, and by Lemma \ref{primeconj} the word $\z$ is synchronizing and deterministic in $X'$ (the priming of $\z$ is determined from the left). Now denote $\z'=0_1'\cdots 0_p'$, let $u=w\z$ and $\one'=w\z'$. It directly follows that $\abs{\one'}\geq 2$ and that none of the symbols of $\z$ occur in $\one'$. Because ${}^\infty\z\one'\z^\infty=\psi({}^\infty\z u\z^\infty)\in X'$, we have $\z\one'\z\in L(X')$ and $K(X')\leq\gcd(\abs{\z},\abs{\one'})=\gcd(\abs{\z},\abs{w})=K(X)\leq K(X')$, where the last inequality follows because $X$ was chosen so that $K(X)$ is minimal. Therefore $\gcd(\abs{\z},\abs{\one'})=K(X')$. By choosing $\one=\one'^k$ for a suitable $k\in\Npos$ we can also get $\gcd(\abs{\z},\abs{\one})=K(X')$ and $\abs{\one}\equiv K(X')\pmod{\abs{\z}}$. Since $\z w\z\in L(X)$ and $\z$ is synchronizing in $X$, it follows that ${}^\infty\z (u^k)^*\z^\infty\subseteq X$, and by applying $\psi$ to these points it follows that $\z\one^*\z\subseteq L(X')$. We may therefore assume in the following that $X$ satisfies the first four items and that $K=K(X)=\min_{X'}K(X')$.

To see that the last item holds, assume to the contrary that there exists  $v\in L(X)$ such that $\z v\z\in L(X)$ and $\abs{v}=nK+r$ for some $n\in\N$, $0<r<K$. We may assume without loss of generality (by considering some suitable subword of $v$ instead if necessary) that none of the symbols of $\z$ occur in $v$. We may also write $\abs{\z}=n_1 K$ and $\abs{\one}=n_2\abs{\z} + K$. Let $\psi:X\to (A\cup A')^\Z$ be a morphism defined by
\[
\psi(x)[i]=
\begin{cases}
x[i]' & \mbox{ when } x[i]=0_j \mbox{ and } x[i-j-\abs{\z v}+1,i]= \z v0_1 0_2\cdots 0_j \\
x[i] & \mbox{ otherwise.}
\end{cases}
\]
By Lemma \ref{primeconj} $\psi$ induces a conjugacy between $X$ and $X'=\psi(X)$. Clearly $\z^\Z=\psi(\z^\Z)\in X'$, and by Lemma \ref{primeconj} the word $\z$ is synchronizing and deterministic in $X'$ (the priming of $\z$ is determined from the left). By choosing $k\in\N$ such that $n+k$ is divisible by $n_1$ and by denoting $u=v\z'\one^k$ we see that ${}^\infty\z u\z^\infty=\psi({}^\infty\z v\z\one^k\z^\infty)\in X'$ and $\z u\z\in L(X')$ (note that $\one^k\neq v$ because $v$ is not divisible by $K$) but
\begin{flalign*}
&\gcd(\abs{\z},\abs{u})=\gcd(n_1 K,(nK+r)+(n_1 K)+k(n_2 n_1 K+K)) \\
&=\gcd(n_1 K,(n+k)K+r)=\gcd(n_1 K,r)<K=K(X),
\end{flalign*}
contradicting $K(X)=\min_{X'}K(X')$.
\end{proof}

We will use the special words in the statement of the previous proposition in conjunction with Lemma~\ref{markerauto}, which explicitly states the principle that we will use to construct reversible CA in the following sections. This principle is known as the marker method and it has been stated in different sources with varying levels of generality, e.g. for full shifts in \cite{Hed69} and for mixing SFTs in \cite{BLR88}. The statement requires the notion of an overlap.

\begin{definition}Let $u,v\in A^*$. We say that $w\in A^*$ is an \emph{overlap} of $u$ and $v$ if $w$ is a suffix of $u$ and a prefix of $v$, or if $w=u$ is a subword of $v$, or if $w=v$ is a subword of $u$. We say that $w$ is a trivial overlap if $w=\epsilon$ or $w=u=v$.\end{definition}

\begin{lemma}\label{markerauto}Let $X$ be a subshift, let $u\in L(X)$ and let $W$ be a finite collection of words such that $uWu\subseteq L(X)$ and each pair of (not necessarily distinct) elements of $uWu$ has only $u$ as an overlap in addition to the trivial ones. Let $\pi:uWu\to uWu$ be a permutation that preserves the lengths and syntactic relation classes of elements of $uWu$. Then there is a reversible CA $F:X\to X$ such that for any $x\in X$ the point $F(x)$ is gotten by replacing every occurrence of any element $w\in uWu$ in $x$ by $\pi(w)$.\end{lemma}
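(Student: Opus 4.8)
The plan is to define $F$ as a finite-window relabeling that looks for occurrences of elements of $uWu$ in the input and rewrites each one as its image under $\pi$, and then to verify separately that (i) $F$ is well-defined and has a local rule, (ii) $F(x)\in X$ for all $x\in X$, and (iii) $F$ is invertible with inverse of the same form. Set $N=\max\{\abs{w}\mid w\in uWu\}$ and work with a window of radius $N$ (or a one-sided window of length $N$, passed to a radius via a shift; either is fine). The key combinatorial input is the overlap hypothesis: since every pair of elements of $uWu$ shares only $u$ and the trivial overlaps, two occurrences of elements of $uWu$ in any $x\in X$ can intersect only inside a common copy of $u$. Hence the set of occurrences of elements of $uWu$ in $x$ decomposes into maximal "runs" glued along copies of $u$, and $F$ replaces each occurring $w$ by $\pi(w)$ anchored at the same position. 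Because $\pi$ preserves lengths, distinct occurrences of $w$'s in $x$ get replaced by equally long words anchored at the same positions, and the $u$-overlaps are respected (each $u$ sitting at the boundary of two occurrences is part of $\pi(w_1)$ and $\pi(w_2)$, each of which still has $u$ as the relevant prefix/suffix), so $F(x)$ is well-defined symbol by symbol and agrees with $x$ outside the union of occurrences.

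For the local rule: whether position $i$ of $F(x)$ is changed, and to what, depends only on which elements of $uWu$ occur in $x$ within distance $N$ of $i$ and at which offsets; since $uWu$ is finite this is determined by $x[i-N,i+N]$, giving a local rule of diameter $\leq 2N$. For $F(x)\in X$: it suffices to check that every word of $F(x)$ of bounded length lies in $L(X)$. Take a long window $v'=F(x)[a,b]$; by the overlap analysis the portion of $x[a',b']$ (a slightly larger window) that was modified is a disjoint-up-to-$u$ union of occurrences $w_1,\dots,w_m\in uWu$, and $F$ replaces the subword $u w_1' u w_2' u \cdots$ (writing $w_j = u w_j' u$) by $u\,\pi(w_1)'\,u\,\pi(w_2)'\,u\cdots$ where $\pi(w_j)=u\pi(w_j)'u$. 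Each individual substitution $w_j\mapsto\pi(w_j)$ is legal in $L(X)$ because $\pi$ preserves syntactic relation classes and $uWu\subseteq L(X)$: any word of the form $p\,w_j\,q\in L(X)$ has $p\,\pi(w_j)\,q\in L(X)$ since $\syn_X(w_j)=\syn_X(\pi(w_j))$ and the product in the syntactic monoid is the same. Doing the substitutions one at a time (the unchanged material between and around the occurrences, always containing the anchoring copies of $u$, plays the role of context) shows $v'\in L(X)$, hence $F(x)\in X$.

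For reversibility: $\pi^{-1}$ is again a permutation of $uWu$ preserving lengths and syntactic classes, so by the same construction there is a CA $F^{-1}_{\mathrm{cand}}$ that replaces each occurrence of $w\in uWu$ by $\pi^{-1}(w)$. One checks $F^{-1}_{\mathrm{cand}}\circ F=\id$: applying $F$ to $x$ replaces each occurring $w_j$ by $\pi(w_j)$; I must argue that the occurrences of elements of $uWu$ in $F(x)$ are exactly the images $\pi(w_j)$ at the same anchor positions, with no spurious new occurrences created. This is the main obstacle, and it is again handled by the overlap hypothesis: any occurrence of some $w\in uWu$ in $F(x)$ must, by the trivial-overlap property applied within $F(x)$, lie inside one of the modified runs $u\,\pi(w_1)'\,u\cdots u\,\pi(w_m)'\,u$ flanked by unmodified symbols; since elements of $uWu$ begin and end with $u$ and overlap each other only in $u$, such an occurrence must coincide with one of the $\pi(w_j)$ blocks (it cannot straddle a run boundary, as the material just outside was unmodified and an element of $uWu$ occurring there in $F(x)$ would have occurred in $x$, contradicting maximality of the run). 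Thus $F^{-1}_{\mathrm{cand}}$ undoes $F$ exactly, so $F$ is a reversible CA with the stated action. The delicate point throughout is bookkeeping the overlap structure to be sure no modification interferes with another and no new marker patterns are accidentally produced; once that is pinned down, everything else is routine.
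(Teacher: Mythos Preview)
Your proof is correct and follows essentially the same approach as the paper: well-definedness from the overlap hypothesis, $F(X)\subseteq X$ by replacing one occurrence at a time using preservation of syntactic classes (the paper phrases this as an induction followed by a compactness step rather than via finite windows), and reversibility from the fact that occurrences of elements of $uWu$ in $F(x)$ sit at exactly the same positions as in $x$. The paper's proof is much terser---it simply asserts this last fact as an immediate consequence of the overlap hypothesis rather than carrying out your run-by-run analysis---but the underlying argument is the same.
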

\begin{proof}The map $F$ is well defined since the elements of $uWu$ can overlap nontrivially only by $u$. For the same reason elements of $uWu$ occur in $F(x)$ at precisely the same positions than in $x$, and then the reversibility of $F$ follows from the reversibility of $\pi$. To see that $F(X)\subseteq X$, note first that replacing a single occurrence of a word $uwu\in uWu$ in $x\in X$ by $\pi(uwu)$ yields another configuration from $X$, because by assumption $uwu$ and $\pi(uwu)$ are in syntactic relation. Then an induction shows that after making any finite number of such replacements the resulting point is still contained in $X$. From this $F(x)\in X$ follows by compactness.\end{proof}

\section{Constructing glider CA on (sofic) synchronizing shifts}\label{glidertransSect}

In this section we will construct a cellular automaton $G_X$, whose most important properties are stated in the following theorem for easier reference. This essentially states that the behavior of Figure~\ref{std} can be replicated by reversible CA on all infinite transitive sofic shifts.

\begin{theorem}\label{absSofic}
Let $Y$ be an infinite transitive sofic subshift and let $\z^\Z\in Y$ be a periodic configuration containing a synchronizing word and whose minimal period is $\abs{\z}$. Then there is a conjugacy $\psi:Y\to X$ such that $\psi(\z^\Z)=\z^\Z\in X$, $\z$ is synchronizing and deterministic in $X$, and a reversible CA $G_X:X\to X$ such that there are
\begin{itemize}
\item words $\gl,\gr\in L(X)$ called \emph{left- and rightbound gliders},
\item languages of gliders $L_\ell=(\gl\z\z^*)^*\subseteq L(X)$ and $L_\arr=(\z^*\z\gr)^*\subseteq L(X)$ and
\item glider fleet sets $\gf_{\ell}={}^\infty\z L_\ell\z^\infty\subseteq X$ and $\gf_\arr={}^\infty\z L_\arr\z^\infty\subseteq X$ (note that in each element there are only finitely many occurrences of $\gl$ and $\gr$), whose elements are called \emph{glider fleets}
\end{itemize}
and for some $s\in\Npos$, which is a multiple of $\abs{\z}$, $G_X$ satisfies
\begin{itemize}
\item $G_X(x)=\sigma^{s}(x)$ for $x\in\gf_\ell$ and $G_X(x)=\sigma^{-s}(x)$ for $x\in\gf_\arr$ and
\item if $x\in X$ is a $\z$-finite configuration, then for every $N\in\N$ there exist $t,N_\ell,N_\arr,M\in\N$, $N_\ell,N_\arr\geq N$ such that $G_X^t(x)[-N_\ell,N_\arr]=\z^M$, $G_X^t(x)[-\infty,-(N_\ell+1)]\in{}^\infty\z L_\ell$ and $G_X^t(x)[N_\arr+1,\infty]\in L_\arr\z^\infty$.
\end{itemize}
\end{theorem}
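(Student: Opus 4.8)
The plan is to normalize the subshift by means of the conjugacies of Section~\ref{markerSect}, to realize $G_X$ as a composition of marker automata (Lemma~\ref{markerauto}) that transport small glider patterns through a background of copies of $\z$, to read off the rigid motion of fleets directly, and to reduce the diffusion property to the two directional lemmas stated right after this theorem.

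\emph{Normalization and gliders.} First, since $\z^\Z\in Y$ contains a synchronizing word $u$ and $u$ occurs as a factor of $\z^k$ for all large $k$, and since extending a synchronizing word on either side to a longer word of $L(Y)$ again yields a synchronizing word, the power $\z^k$ is synchronizing in $Y$ for all large $k$. Thus Proposition~\ref{0prim} lets us assume, up to a conjugacy fixing $\z^\Z$, that $\z$ is deterministic and synchronizing with pairwise distinct symbols; Proposition~\ref{01} then produces, after a further such conjugacy, a word $\one$ with $\abs{\one}\ge 2$, no symbol of which occurs in $\z$, with $\z\one^*\z\subseteq L(X)$, $\abs{\one}\equiv K\pmod{\abs{\z}}$ for $K=\gcd(\abs{\z},\abs{\one})$, and $K\mid\abs{w}$ whenever $\z w\z\in L(X)$. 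Writing $\psi:Y\to X$ for the composite conjugacy gives $\psi(\z^\Z)=\z^\Z$ and $\z$ deterministic and synchronizing in $X$. One then fixes the glider words $\gl,\gr$ explicitly in terms of $\z$ and $\one$, following the binary model of Figure~\ref{std} (there $\z=0$, $\gl=1$, $\gr=11$): $\gr$ is built from a copy of $\one$ and $\gl$ from a length-$K$ piece of it, so that $\abs{\gl},\abs{\gr}$ are appropriate multiples of $K$ and neither word meets the alphabet of $\z$. Setting $L_\ell=(\gl\z\z^*)^*$, $L_\arr=(\z^*\z\gr)^*$, $\gf_\ell={}^\infty\z L_\ell\z^\infty$, $\gf_\arr={}^\infty\z L_\arr\z^\infty$, the inclusions $\gf_\ell,\gf_\arr\subseteq X$ follow from $\z\one^*\z\subseteq L(X)$ and synchronization: glue arbitrarily many glider blocks between copies of $\z$ (each gluing at an occurrence of the synchronizing word $\z$), close up with $\z^\infty$ on both sides, and pass to the limit by compactness.

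\emph{The automaton and the motion of fleets.} Define $G_X=P_m\circ\cdots\circ P_1$ as a composition of reversible CA, each $P_i$ obtained from Lemma~\ref{markerauto} with marker word $\z$ (or a fixed power of it): $P_i$ is an involution that swaps prescribed pairs of equal-length $\z$-bounded words $\z v\z$, namely the ``before'' and ``after'' snapshots of one glider advancing a single small step, together with the snapshots resolving an encounter between a left- and a rightbound glider, in the same spirit as $P_1,P_2,P_3$ in the introductory example. The hypotheses of Lemma~\ref{markerauto} hold because of the normalization: each $P_i$ is an involution, hence length-preserving; the only nontrivial overlap between two of the permuted words is $\z$ itself, which is where determinism of $\z$ is used; and $\pi$ preserves syntactic classes because any two words carrying the synchronizing word $\z$ as both a prefix and a suffix lie in one class of $\syn_X$ by Lemma~\ref{synborder}. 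Let $s$ be the net displacement of a glider under one application of $G_X$; the congruence $\abs{\one}\equiv K\pmod{\abs{\z}}$ and the divisibility $K\mid\abs{w}$ from the normalization make $s$ a positive multiple of $\abs{\z}$. On a fleet $x\in\gf_\ell$ the glider blocks are separated by runs $\z^{\ge 1}$, so each constituent local move of $G_X$ acts on one glider in isolation; composing them transports every $\gl$ leftward by $s$ while only relabeling the $\z$-background, so $G_X(x)=\sigma^s(x)$, and symmetrically $G_X(x)=\sigma^{-s}(x)$ for $x\in\gf_\arr$ (and since $\abs{\z}\mid s$ the outcome is again a fleet).

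\emph{Diffusion, and the main obstacle.} It remains to show that every $\z$-finite $x$ diffuses, i.e., for every $N$ there are $t,M$ and $N_\ell,N_\arr\ge N$ with $G_X^t(x)[-N_\ell,N_\arr]=\z^M$, $G_X^t(x)[-\infty,-(N_\ell+1)]\in{}^\infty\z L_\ell$ and $G_X^t(x)[N_\arr+1,\infty]\in L_\arr\z^\infty$. This is carried out by Lemmas~\ref{left} and~\ref{right}: reading $x$ from the left, its leftbound gliders eventually bubble past every rightbound pattern and escape to the left at speed $s/\abs{\z}$, so that after enough steps $G_X^t(x)$ agrees with a leftbound fleet on an interval $(-\infty,-N_\ell]$ with $N_\ell$ arbitrarily large; symmetrically on the right; and once the leftmost $N$ cells of leftbound fleet and the rightmost $N$ cells of rightbound fleet have detached permanently, the central block left over is a pure $\z^M$. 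This sorting is the crux and the main obstacle: one must track how an arbitrary finite pattern is parsed into gliders under iteration, bound the number of gliders that appear, and prove that a leftbound and a rightbound glider genuinely pass through one another (or exchange roles so as still to yield two escaping fleets) --- all while every local move is reversible and keeps the configuration in $X$, which is precisely why the normalization was done: it reduces every relevant local configuration to a short list of $\z$-bounded words with controlled overlaps and syntactic classes. Here the conditions $K\mid\abs{w}$ for $\z w\z\in L(X)$ and $\abs{\one}\equiv K\pmod{\abs{\z}}$ ensure that no $\z$-bounded word can fail to be parsed into these gliders, forcing the claimed decomposition.
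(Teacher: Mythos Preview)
Your outline captures the shape of the construction but misses the one place where soficness is genuinely used, and without that ingredient the argument cannot close. All of the $P_i$ you describe swap $\z$-bounded words $\z v\z$ (with marker $u=\z$). Such a map can only act at a position once it sees a $\z$ on both sides within its radius. In a proper sofic shift this is not enough: in the even shift, for instance, the $\z$-finite configuration ${}^\infty 0\, 1^{M}\, 0^\infty$ contains a block of non-$\z$ symbols of length $M$ bounded by single $\z$'s, and no finite composition of maps of the type you describe can touch its interior once $M$ exceeds the combined radius. Your final sentence (``the conditions $K\mid\abs{w}$ and $\abs{\one}\equiv K\pmod{\abs{\z}}$ ensure that no $\z$-bounded word can fail to be parsed into these gliders'') is the wishful step: it explains why lengths are compatible, not why a bounded-radius reversible CA can actually perform the parse on arbitrarily long $B$-blocks.

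The paper's $P_3$ is designed precisely for this obstacle and is \emph{not} of the form you propose: it takes $u=\lambda$ (not $\z$) and permutes words of the form $\z^{q+1}w$ with $w\in B^N$ cyclically through a list of representatives of the finitely many syntactic classes $\syn_X(\z w)$. Finiteness of this list is exactly soficness (see Remark~\ref{BoyleRmrk}), and one of the representatives is chosen to contain an internal $\z$, so that after enough applications the long $B$-block acquires a $\z$ near its left end and the erosion of Lemma~\ref{left} can proceed. Relatedly, $P_3$ and $P_4$ are not involutions, your glider description is off (in the paper $\gl=\z^q\one$ does meet the alphabet of $\z$), and there is no explicit ``collision rule'' for $\gl$ meeting $\gr$: passage happens automatically once the left/right bound lemmas are in place. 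Citing Lemmas~\ref{left} and~\ref{right} for the diffusion is fine, but those lemmas are statements about the specific $G_X$ with its $P_3$ component; they do not follow from the generic $\z$-marker maps you define.
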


We will see that almost all steps of the construction of $G_X$ work without the assumption of soficness. Therefore we are also able to construct a family of CA $G_{X,n}$ on not necessarily sofic $X$ which shares some of the functionality of the CA $G_X$. We will use the details of the construction of $G_X$ and $G_{X,n}$ in later sections. An alternative would be to include all the used properties in the statement of Theorem~\ref{absSofic}, but this would make the statement of the theorem significantly longer and less clear. We leave this modification as an exercise to the interested reader. 

To begin the construction, we start with an infinite synchronizing subshift $X\subseteq A^\Z$ and an arbitrary periodic configuration $\z^\Z\in X$ containing a synchronizing word. We will also assume in the rest of this section that there is a word $\one\in L(X)$ that together with $\z$ satisfies the statement of Proposition~\ref{01}: this can be done up to conjugacy by combining Propositions~\ref{0prim} and~\ref{01}.

Let $p=\abs{\z}$, $q=\abs{\one}$ and $K=\gcd(p,q)$. The words
\[\gl=\z^q \one \qquad \gr=\one^{p+1}\]
will be the left- and rightbound gliders. The languages of left- and rightbound gliders are
\[L_\ell=(\gl\z\z^*)^* \qquad L_\arr=(\z^*\z\gr)^*\]
and we define the glider fleet sets
\[\gf_{\ell}={}^\infty\z(\gl \z\z^*)^*\z^\infty \qquad \gf_\arr={}^\infty\z(\z^*\z \gr)^*\z^\infty.\]
These definitions cover the first three items in the statement of Theorem~\ref{absSofic}.

We now define reversible CA $P_1,P_2:X\to X$ as follows. In any $x\in X$,

\begin{itemize}
\item $P_1$ replaces every occurrence of $\z(\z^{q}\one)\z$ by $\z(\one^{p+1})\z$ and vice versa.
\item $P_2$ replaces every occurrence of $\z (\one^{p+1})\z$ by $\z(\one\z^q)\z$ and vice versa.
\end{itemize}

Each $P_i$ is defined as in Lemma \ref{markerauto} by $u=\z$, a set $B_i$ of two finite words and nontrivial permutations $\pi_i$. In each case the words in $u B_i u$ are of equal length and easily verified to have only trivial overlaps by Proposition~\ref{01}. By Lemma~\ref{synborder} both elements in each $u B_i u$ are in syntactic relation, so we conclude that Lemma \ref{markerauto} is applicable.

To define the CA $P_3$ let us assume in this paragraph that $X\subseteq A^\Z$ is a sofic shift, so $\syn_X$ is a finite set. If $\z=0_1\cdots 0_p$, denote $B=A\setminus\{0_1,\dots, 0_p\}$. Then also
\[P=\{\syn_X(\z w)\mid w\in L(X)\cap (B^K)^+,\z w\in L(X),\abs{w}>q(p+1)\}\]
is a finite set and we may choose a uniform $N_1\in \N$ such that for every $S\in P$ there is a word $w_S'\in L(X)\cap (B^K)^+$ with $S=\syn_X(\z w_S')$ and $q(p+1)<\abs{w_S'}\leq N_1$. The lengths of the words in $(\one\z)^+\one^+(\one^{p+1}\z)$ attain all sufficiently large multiples of $K$, so we can fix $N\in\N$ which is divisible by $K$ such that for every $S\in P$ there is a word $w_{S}\in (\one\z)^+\one^+(\one^{p+1}\z) w_S'$ of length $N$. Furthermore we assume that $N>\abs{\one^{p+1+p/K}}$ (this is needed in a later paragraph). In particular $\z w_S\in S$ by Lemma \ref{synborder}. Fix some such $w_S$, let $W_S'=\{w_{S,1},\dots, w_{S,k_S}\}$ be the set of those words from $L(X)\cap B^N$ such that $\z w_{S,i}\in S$ for $1\leq i\leq k_S$, denote $W_S=W_S'\cup\{w_S\}$ and $W=\bigcup_{S\in P}W_S$. For applying Lemma \ref{markerauto}, let $u=\epsilon$ and let $\pi:\z^{q+1} W\to \z^{q+1} W$ be the permutation that maps the elements of each $\z^{q+1} W_S$ cyclically, i.e. $\z^{q+1} w_S\to \z^{q+1} w_{S,1}\to\dots \to \z^{q+1} w_{S,k_S}\to \z^{q+1} w_S$. Define the reversible CA $P_3:X\to X$ that replaces occurrences of elements of $\z^{q+1} W_S$ using the permutation $\pi$.

For this paragraph fix some integer $n>\abs{\one^{p+1+p/K}}$. We define the CA $P_{4,n}$ that ``permutes words shorter than $n$ not containing $\z$'' as follows. For each $j\in\{1,\dots, p/K\}$ let $u_{j}'=\one\z^q\one^{j}$ (the names of all the words we define in this paragraph should contain the parameter $n$ in the index, but we suppress it to avoid clutter), and let $U_{j,n}'=\{u_{j,1}',\dots,u_{j,n_j}'\}\subseteq L(X)\cap B^+$ be the set of nonempty words of length at most $n-1$ such that $\z u_{j,i}'\z\in L(X)$, $u_{j,n_j}'=\one^{p+1+j}$ ($\abs{u_{j,n_j}'}<n$ by the choice of $n$), $\abs{u_{j,i}'}\equiv\abs{u_j'}\equiv(j+1)K\pmod p$, with the additional restriction that $\one,\one^{p+1}\notin U_{p/K,n}'$. Finally, these words are padded to constant length: let $u_j=\z^{c_j}u_j'$ and $u_{j,i}=\z^{c_{j,i}}u_{j,i}'$, where $c_j, c_{j,i}\geq q+1$ are chosen in such a way that all $u_j$, $u_{j,i}$ are of the same length for any fixed $j$. Let $U_{j,n}=\{u_j\}\cup\{u_{j,i}\mid 1\leq i\leq n_j\}$, $U_n=\bigcup_{j=1}^{p/K} U_{j,n}$. For applying Lemma \ref{markerauto}, let $u=\z$, let $V_{j,n},V_n\subseteq L(X)$ such that $\z V_{j,n}\z=U_{j,n}\z$, $\z V_n\z=U_n\z$ and let $\rho:\z V_n\z\to \z V_n\z$ be the permutation that maps the elements of each $\z V_{j,n}\z$ cyclically, i.e. $u_j\z\to u_{j,1}\z\to\dots  \to u_{j,n_j}\z\to u_j\z$. Define the reversible CA $P_{4,n}:X\to X$ that replaces occurrences of elements of $U_{j,n}\z$ using the permutation $\rho$.

In the case when $X$ is a sofic shift define $P_4=P_{4,N}$, where $N$ is the number defined two paragraphs above. In this case we can drop the subscript $N$ from the sets $U_{j,N}', U_{j,N}, U_N, V_{j,N}, V_N$ of the previous paragraph. 

The glider CA $G_{X,n}:X\to X$ (with parameter $n$) is defined as the composition $P_{4,n}\circ P_2\circ P_1$. If $X$ is sofic, the \emph{diffusive glider CA} $G_X:X\to X$ is defined as the composition $P_4\circ P_3\circ P_2\circ P_1$. All statements concerning the CA $G_X$ below contain the assumption that $X$ is sofic.

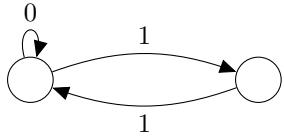
\begin{figure}[ht]
\centering
\begin{tikzpicture}[auto]
\node (l) at (0,0) [shape=circle,draw,minimum size=6mm] {};
\node (r) at (3,0)  [shape=circle,draw,minimum size=6mm] {};

\draw[-{triangle 45}] (l) to [out=20, in=160] node {$1$} (r);
\draw[-{triangle 45}] (r) to [out=200, in=340] node {$1$} (l);
\draw[-{triangle 45}] (l) to [in=75, out=105, loop above] node {$0$} ();
\end{tikzpicture}
\caption{The graph of the even shift.}
\label{egraph}
\end{figure}

\begin{example}
We will give the explicit construction of the diffusive glider CA $G_X:X\to X$ in the case when $X\subseteq \{0,1\}^\Z$ is the even shift containing those configurations in which no words from $\{01^{2n+1}0\mid n\in\N\}$ occur. More concretely, the configurations of $X$ are precisely the labels of all bi-infinite paths on the graph presented in Figure~\ref{egraph}. Let $\z=0$ and $\one=11$, so $p=\abs{\z}=1$, $q=\abs{\one}=2$ and $K=\gcd(\abs{\z},\abs{\one})=1$. It is easy to verify that these choices of $\z$ and $\one$ satisfy the statement of Proposition~\ref{01} (note in particular that the determinism of $\z$ is vacuously true because $\abs{\z}=1$). The CA $P_1$ replaces every occurrence of $000110$ by $011110$ and vice versa, $P_2$ replaces every occurrence of $011110$ by $011000$ and vice versa.

For defining the CA $P_3,P_4$, note that $B=\{0,1\}\setminus\{0\}=\{1\}$ (the set of symbols not in $\z$) and
\[P=\{\syn_X(0w)\mid w\in 1^+,\abs{w}>4\}=\{\syn_X(01^5),\syn_X(01^6)\}.\]
Denote $S_0=\syn_X(0)=\syn_X(01^6)$ and $S_1=\syn_X(01)=\syn_X(01^5)$ and choose $w_{S_0}'=111111$, $w_{S_1}'=11111$. Then we can choose
\begin{flalign*}
&w_{S_0}=110(11)^4 0w_{S_0}'=110111111110111111\mbox{ and } \\
&w_{S_1}=110110(11)^3 0w_{S_1}'=110110111111011111,
\end{flalign*}
which are of length $N=18$. If $w\in B^N$ then $w=1^{18}$ and $\syn_X(0w)=S_0$ and therefore $W_{S_0}'=\{w_{S_0,1}\}=\{1^{18}\}$, $W_{S_1}'=\emptyset$ and $P_3$ is the CA that replaces every occurrence of
\begin{flalign*}
&000w_{S_0}=000110111111110111111\mbox{ by } \\
&000w_{S_0,1}=000111111111111111111
\end{flalign*}
and vice versa.

Recall that $p=1$, so $u_j'$, $U_j'$, etc. need to be defined only for $j=1$. Let $u_1'=110011$ and $U_1'=\{u'_{1,i}\mid 1\leq i\leq 6\}$, where $u_{1,1}'=1^{16}$, $u_{1,2}'=1^{14}$, $u_{1,3}'=1^{12}$, $u_{1,4}'=1^{10}$, $u_{1,5}'=1^{8}$ and $u_{1,6}'=1^{6}$. These are padded to constant length: $u_1=0^{13} 110011$, $u_{1,1}=0^3 1^{16}$, $u_{1,2}=0^5 1^{14}$, $u_{1,3}= 0^7 1^{12}$, $u_{1,4}=0^9 1^{10}$, $u_{1,5}=0^{11} 1^{8}$ and $u_{1,6}=0^{13} 1^{6}$. are words of length $19$. The CA $P_4$ permutes occurrences of $0^{13} 1100110$, $0^3 1^{16}0$, $0^5 1^{14}0$, $0^7 1^{12}0$, $0^9 1^{10}0$, $0^{11} 1^{8}0$ and $0^{13} 1^{6}0$ cyclically.

The space-time diagram of a typical finite configuration $x\in X$ with respect to $G_X$ is plotted in Figure~\ref{soficstd}. In this figure it can be seen that $x$ eventually diffuses into two glider fleets, leaving the area around the origin empty.
\end{example}

\begin{figure}[ht]
\centering
\includegraphics[width = 4.44in, height = 1.03in]{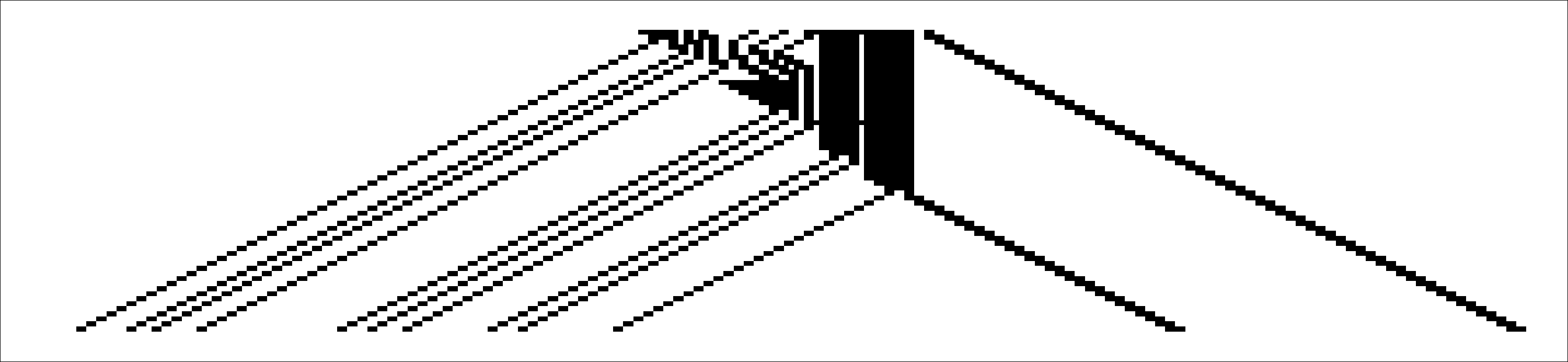}
\caption{Action of $G_X:X\to X$ on a typical $\z$-finite configuration of $X$ when $X$ is the even shift. White and black squares correspond to digits $0$ and $1$ respectively.}
\label{soficstd}
\end{figure}

Theorem~\ref{absSofic} predicts that the behavior observed in Figure~\ref{soficstd} also happens in general, thus giving justification for calling $G_X$ a diffusive glider CA. The following lemma covers the fourth item in Theorem~\ref{absSofic}.

\begin{lemma}\label{localshift}
If $x\in\gf_\ell$ (resp. $x\in\gf_\arr$), then $G_X(x)=G_{X,n}(x)=\sigma^{pq}(x)$ (resp. $G_X(x)=G_{X,n}(x)=\sigma^{-pq}(x))$.
\end{lemma}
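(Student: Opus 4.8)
The statement says that $G_X$ and $G_{X,n}$ act as a power of the shift on glider fleets. Since both maps are compositions of the $P_i$, the plan is to trace a configuration $x$ in $\gf_\ell$ (resp. $\gf_\arr$) through each component map in turn and check that the net effect is $\sigma^{pq}$ (resp. $\sigma^{-pq}$). I would first handle the leftbound case $x\in\gf_\ell={}^\infty\z(\gl\z\z^*)^*\z^\infty$, where $\gl=\z^q\one$. The key observation is that within such an $x$, the only occurrences of the marker patterns $\z(\z^q\one)\z$, $\z(\one^{p+1})\z$, $\z(\one\z^q)\z$, the $\z^{q+1}W_S$-patterns of $P_3$, and the $U_{j,n}\z$-patterns of $P_{4,n}$ are governed entirely by the positions of the $\one$-blocks, which are isolated (bordered by long runs of $\z$). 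I would argue that $x$ contains an occurrence of $\z(\z^q\one)\z$ exactly at each glider, and none of the other marker patterns occur; hence $P_1$ replaces each $\z^q\one$ by $\one^{p+1}$, $P_2$ then replaces each resulting $\z(\one^{p+1})\z$ by $\z(\one\z^q)\z$, while $P_3$ and $P_{4,n}$ (and $P_4$) act trivially because the required patterns (which all contain $\one$-runs or $\z$-blocks of specific lengths that cannot appear in a pure left-fleet) are absent. The composition $P_2\circ P_1$ thus replaces each occurrence of $\z\,\z^q\one\,\z$ by $\z\,\one\z^q\,\z$, i.e. it shifts each $\one$-block $q$ positions to the left relative to the surrounding $\z$-lattice — but since $\abs{\z}=p$ divides the displacement bookkeeping, the global effect on the configuration is exactly $\sigma^{pq}$. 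I would make this precise by comparing $G_X(x)$ and $\sigma^{pq}(x)$ position by position: both consist of the same left fleet with every glider translated, and one checks the translation amount is $pq$ since moving a word of length $q$ by one full $\z$-period (length $p$) to the left, done for the whole bi-infinite arrangement, is the shift by $pq$. The symmetric argument, with the roles of $P_1$ and $P_2$ and the direction of displacement reversed, handles $x\in\gf_\arr$ giving $\sigma^{-pq}$.

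The two things requiring the most care are: (1) verifying that no ``spurious'' marker patterns occur in a glider fleet configuration, in particular that the words in $W$ (length $N>\abs{\one^{p+1+p/K}}$, built from $(\one\z)^+\one^+(\one^{p+1}\z)w_S'$ with $w_S'\in(B^K)^+$) and in $U_n$ simply cannot be subwords of any element of $\gf_\ell$ or $\gf_\arr$ because a glider fleet has its $\one$-blocks isolated by arbitrarily long $\z$-runs on at least one side in a way incompatible with those patterns, except possibly at the single $\one^{p+1}=\gr$ block appearing in a right glider — and here the restriction $\one,\one^{p+1}\notin U_{p/K,n}'$ and the length condition $N>\abs{\one^{p+1+p/K}}$ are exactly what rules out an unwanted application; and (2) the arithmetic confirming that the per-glider displacement assembles into precisely $\sigma^{\pm pq}$ rather than some other shift. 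I expect (1) to be the main obstacle, since it amounts to a finite but somewhat delicate case analysis of how the defining words of $P_3$ and $P_{4,n}$ could fit inside a fleet; once that is settled, step (2) is a routine index computation and the rest of the proof is bookkeeping. Since $pq$ is a multiple of $p=\abs{\z}$, this also confirms the ``$s$ is a multiple of $\abs{\z}$'' claim of Theorem~\ref{absSofic} with $s=pq$.
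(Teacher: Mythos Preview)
Your proposal is correct and follows essentially the same approach as the paper: trace a glider fleet through $P_1$, then $P_2$, observe that each $\gl$ (resp. $\gr$) is displaced by $pq$ to the left (resp. right), and argue that $P_3$ and $P_4$ (or $P_{4,n}$) act as the identity on the result. The paper's proof is considerably more terse---it simply writes out the local window around one glider through $P_1$ and $P_2$ and then asserts $G_X(x)=P_4(P_3(P_2(P_1(x))))=P_2(P_1(x))$ without spelling out the ``no spurious marker pattern'' verification that you (rightly) flag as the main point requiring care.
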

\begin{proof}
We present the proof only for $G_X$. Assume that $x\in\gf_\ell$ (the proof for $x\in\gf_\arr$ is similar) and assume that $i\in\Z$ is some position in $x$ where $\gl$ occurs. Then
\begin{flalign*}
&x[i-p,i+(pq+q)+p-1]=\z\gl\z=\z(\z^q \one)\z \\
&P_1(x)[i-p,i+(pq+q)+p-1]=\z(\one^{p+1})\z \\
&P_2(P_1(x))[i-p-pq,i+q+p-1]=\z^q\z(\one\z)=\z\gl\z \\
&G_X(x)=P_4(P_3(P_2(P_1(x))))=P_2(P_1(x))),
\end{flalign*}
so every glider has shifted by distance $pq$ to the left and $G_X(x)=\sigma^{pq}(x)$.
\end{proof}

In fact, the previous lemma would hold even if $G_X$ and $G_{X,n}$ were replaced by $P_2\circ P_1$. The role of the part $P_4\circ P_3$ in $G_X$ for sofic $X$ is, for a given finite point $x\in X$, to ``erode'' non-$\z$ non-glider parts of $x$ from the left and to turn the eroded parts into new gliders. Similarly, for not necessarily sofic $X$, the part $P_{4,n}$ can erode non-$\z$ non-glider parts from the left, but in this case only under the assumption that these parts are shorter than $n$.  We will formalize this in a lemma, in the proof of which the following structural definitions will be useful.

\begin{definition}Let $n>\abs{\one^{p+1+p/K}}$. Assume that $x\notin\gf_\ell$ is a $\z$-finite element of $X$ not in $\orb{\z^\Z}$ and not containing occurrences of words from $B^n$. Then there is a maximal $i\in\Z$ such that
\[x[-\infty,i-1]\in{}^\infty\z L_\ell,\]
and there is a unique word $w\in \{\one\z\}\cup\{\one^{p+1}\z\}\cup (\bigcup_{j=1}^{p/K} U_{j,n}'\z)$ such that $w$ is a prefix of $x[i,\infty]$. Let $k=i+\abs{w}-1$. We say that $x$ is of $n$-\emph{left bound type} $(w,k)$ and that it has $n$-left bound $k$ (note that $k>i$).
\end{definition}

\begin{definition}Assume that $X$ is a sofic shift and that $x\notin\gf_\ell$ is a $\z$-finite element of $X$ not in $\orb{\z^\Z}$. Then there is a maximal $i\in\Z$ such that
\[x[-\infty,i-1]\in{}^\infty\z L_\ell,\]
and there is a unique word $w\in \{\one\z\}\cup\{\one^{p+1}\z\}\cup (\bigcup_{j=1}^{p/K} U_j'\z)\cup(\bigcup_{S\in P} W_S')$ such that $w$ is a prefix of $x[i,\infty]$. If $w\in\{\one\z,\one^{p+1}\z\}$ or $w\in U_j'\z$, let $k=i+\abs{w}-1$ and otherwise let $k=i+\abs{\one\z}-1$. We say that $x$ is of \emph{left bound type} $(w,k)$ and that it has left bound $k$ (note that $k>i$).
\end{definition}

We outline a deterministic method to narrow down the word $w$ in the definition of left bound type in a way that clarifies its existence and uniqueness (the case of $n$-left bound type would be similar). First, by the maximality of $i$ it follows that $x[i]\in B$. If $x[i,i+N-1]\in B^N$, then $w\in W_{\syn_X(\z x[i,i+N-1])}'$ directly by the definition of the sets $W_S'$. Otherwise $x[i,i+N-1]\notin B^N$ and there is a minimal $m<N$ such that $x[i,i+m-1]\in B^m$ and $x[i+m,i+m+p-1]=\z$. Then $\z x[i,i+m-1]\z\in L(X)$ and by the last item of Proposition~\ref{01} $m$ is divisible by $K$. Then by the second to last item of Proposition~\ref{01} $w\in U_j'\z$ for some $j\in\{1,\dots,p/K\}$ \emph{unless} we have specifically excluded $x[i,i+m-1]$ from all the sets $U_j'$. But this happens precisely if $x[i,i+m-1]\in\{\one,\one^{p+1}\}$, in which case $w\in \{\one\z,\one^{p+1}\z\}$.

The point of this definition is that if $x$ is of left bound type $(w,k)$, then the CA $G_X$ and $G_{X,n}$ will create a new leftbound glider at position $k$ and break it off from the rest of the configuration.

\begin{lemma}\label{left}Assume that $x\in X$ has left bound $k$. Then there exists $t\in\Npos$ such that the left bound of $G_X^t(x)$ is strictly greater than $k$. Moreover, the left bound of $G_X^{t'}(x)$ is at least $k$ for all $t'\in\N$.\end{lemma}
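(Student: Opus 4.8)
The plan is to track the space-time behavior of the configuration near its left bound under iteration of $G_X = P_4\circ P_3\circ P_2\circ P_1$, and show that after finitely many steps the prefix $x[i,\infty]\in\{\one\z,\one^{p+1}\z\}\cup U_j'\z\cup W_S'$-situation gets converted, by the cyclic permutations built into $P_3$ and $P_4$, into one where a fresh leftbound glider $\gl$ has been appended to ${}^\infty\z L_\ell$ and split off. Concretely, suppose $x$ is of left bound type $(w,k)$. First I would observe that $P_1,P_2$ act on $x$ only by shifting the already-formed gliders in ${}^\infty\z L_\ell$ leftward (as in Lemma~\ref{localshift}) and leave the region from position $i$ rightward essentially untouched except possibly within the window where $w$ sits; I would make precise which occurrences of $\z\gl\z$, $\z\one^{p+1}\z$, $\z\one\z^q\z$ lie to the left of $i$ and confirm none of the pattern-replacements of $P_1,P_2$ straddle position $i$ in a way that destroys the "maximal $i$ with $x[-\infty,i-1]\in{}^\infty\z L_\ell$" structure. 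Then I would analyze the effect of $P_3$ and $P_4$: depending on which of the three cases $w$ falls into, exactly one of the cyclic permutations $\pi$ (for $w\in W_S'$) or $\rho$ (for $w\in U_j'\z$, or for the excluded words $\one\z,\one^{p+1}\z$ which feed into the $j=p/K$ cycle) fires at the relevant position, advancing $w$ one step along its cycle.

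The key structural fact I would use is that each cycle in $\pi$ (resp. $\rho$) was set up so that, after enough applications, the non-$\z$ non-glider block is replaced by a block consisting entirely of $\one$'s of the right length — specifically the distinguished element $w_S\in(\one\z)^+\one^+(\one^{p+1}\z)w_S'$ in the $P_3$ cycles, and $u_{j,n_j}'=\one^{p+1+j}$ together with the $\one$-power words $u_{j,i}'=1^{\cdots}$ in the $P_4$ cycles. So iterating $G_X$ walks the block around its cycle until it hits a configuration where position $i$ begins with a copy of $\one\z$ that can be absorbed into ${}^\infty\z L_\ell$ as a glider prefix, or where the block has shrunk/transformed so that a full $\gl=\z^q\one$ appears just past the old left bound region and is cleanly delimited by $\z$ on both sides. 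Tracking the length arithmetic via the congruences $\abs{\one}\equiv K\pmod p$, $\abs{u_{j,i}'}\equiv(j+1)K\pmod p$, and divisibility of $\abs{w}$ by $K$ (the last two items of Proposition~\ref{01}) is what guarantees these transformed blocks still tile correctly against $\z^*$, so that after the replacement the new point is still $\z$-finite with $x[-\infty,i'-1]\in{}^\infty\z L_\ell$ for some $i'>i$, i.e. the left bound has strictly increased. For the "moreover" part — that the left bound never drops below $k$ for any $t'\in\N$ — I would note that each of $P_1,P_2,P_3,P_4$ is reversible and that the replacement patterns are bordered by $\z$ (or by $\z$ and gliders), so no application of $G_X$ can ever delete a glider from the already-formed fleet ${}^\infty\z L_\ell[-\infty,i-1]$ or merge it back into amorphous material; formally one checks that the set $\{x\in X : \text{left bound of }x\ge k\}$ (for a fixed realization of the fleet to the left of $k$) is forward-invariant under $G_X$ by a case analysis showing none of the four pattern-replacement rules has a left-hand side overlapping the stabilized region destructively.

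The main obstacle I expect is the bookkeeping in the case analysis for $P_3$ and $P_4$: one must verify that at the position just past the current left bound, exactly one replacement rule of $P_3\circ P_4$ is enabled, that it fires at the position we expect, and that it does not simultaneously get triggered at a nearby overlapping position (this is where the "only $\z$ as nontrivial overlap" hypotheses feeding Lemma~\ref{markerauto}, plus the padding lengths $c_j,c_{j,i}\ge q+1$ and the size condition $N>\abs{\one^{p+1+p/K}}$, all get used). A secondary subtlety is bounding $t$: since the cycles have finite length ($k_S+1$ for the $P_3$-cycle indexed by $S$, and $n_j+1$ for the $P_4$-cycle indexed by $j$), one expects $t$ on the order of the sum of the relevant cycle lengths, but one must confirm that the interleaving of $P_3$ and $P_4$ within a single application of $G_X$, together with the leftward drift of the whole fleet under $P_1,P_2$, does not stall the progress — i.e. that the "potential" (e.g. the position of the left bound, or the index within the current cycle) is genuinely monotone. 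Once monotonicity and the finiteness of the cycles are in hand, the existence of the desired $t\in\Npos$ is immediate, and the forward-invariance argument of the previous paragraph gives the "moreover" clause.
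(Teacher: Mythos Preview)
Your overall plan---case-split on the left bound type $(w,k)$ and track how the cycles in $P_3,P_4$ advance until a fresh $\gl$ is shed---is exactly the strategy the paper uses, and the ``moreover'' clause via forward-invariance is fine in spirit. But two of the mechanisms you describe are wrong, and these errors would derail the actual case analysis.

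First, the cases $w=\one\z$ and $w=\one^{p+1}\z$ do \emph{not} ``feed into the $j=p/K$ cycle'' of $P_4$. They were explicitly excluded from $U_{p/K}'$ precisely so that $P_4$ leaves them alone. What actually handles them is $P_1$ and $P_2$: the paper shows (its Cases~1,~2,~4) that after $P_1$ one has $\z\one\z$ sitting at the right edge, and then either $P_2$ promotes it to $\z\one^{p+1}\z$ further right (strictly increasing the bound), or $P_3$/$P_4$ fires at that position and sends you into one of the other case families. So the flow is not ``everything gets absorbed into a $P_4$ cycle''; rather, $\one\z$ and $\one^{p+1}\z$ act as a hub that the other cases route through.

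Second, the $u_{j,i}'$ are \emph{not} $\one$-powers in general---they are arbitrary words of $B^+$ of the right length class, and only the two distinguished members $u_j'=\one\z^q\one^j$ and $u_{j,n_j}'=\one^{p+1+j}$ have a prescribed form. The way Case~3 actually works in the paper is: the $P_4$-cycle carries $u_{j,i}'$ around to the distinguished $u_j'=\one\z^q\one^j$, whose prefix $\one\z$ lets you strip off a piece and drop to left bound type $(u_{j-1,i'}',k)$ for a smaller $j$; you cascade down $j\to j-1\to\cdots\to 1$, and at $j=1$ you land in the $\z\one\z$ situation of Case~4. You have not mentioned this $j$-cascade at all, and without it the argument for $w\in U_j'\z$ does not terminate.

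In short: keep the five-way case split the paper uses ($w=\one^{p+1}\z$; $w=\one\z$; $w\in U_j'\z$; the intermediate $\z\one\z$ state; $w\in W_S'$), and trace the explicit transitions between these cases rather than asserting that a single cycle in $P_3$ or $P_4$ does the whole job.
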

\begin{proof}
Let $x\in X$ be of left bound type $(w,k)$ with $w\in \{\one\z\}\cup\{\one^{p+1}\z\}\cup (\bigcup_{j=1}^{p/K} U_j'\z)\cup(\bigcup_{S\in P} W_S')$. The gliders to the left of the occurrence of $w$ near $k$ move to the left at constant speed $pq$ under the action of $G_X$ without being affected by the remaining part of the configuration.

\begin{description}
\item[Case 1.] Assume that $w=\one^{p+1}\z$. Then $P_1(x)[k-(q+2p)+1,k]=\z\one\z$ and we proceed to Case 4. 
\item[Case 2.] Assume that $w=\one\z$. Then $x[k-(q+2)p-q+1,k]\neq\z(\z^q\one)\z=\z\gl\z$ because otherwise the left bound of $x$ would already be greater than $k$, so $P_1(x)[k-2p-q+1,k]=\z\one\z$ and we proceed to Case 4.
\item[Case 3.] Assume that $w=u_{j,i}'\z$ for $1\leq j\leq p/K$, $1\leq i\leq n_j$. There is a minimal $t\in\N$ such that $P_3(P_2(P_1(G_X^t(x))))[k-(p+\abs{u_j})+1,k]=u_{j,i}\z$. Denote $y=G_X^{t+n_j-i+1}(x)$ so in particular $y[k-(p+\abs{u_j})+1,k]=u_j\z$. If $j>1$, then $y$ is of left bound type $(u_{j-1,i'},k)$ for some $1\leq i'<n_{j-1}$ and we may repeat the argument in this paragraph with a smaller value of $j$. If $j=1$, then $P_1(x)[k-(q+2p)+1,k]=\z\one\z$ and we proceed as in Case 4.
\item[Case 4.] Assume that $P_1(x)[k-(q+2p)+1,k]=\z\one\z$. If $P_1(x)[k-(q+2p)+1,k+qp]=\z(\one\z^q)\z$, then $G_X(x)[k-(q+2p)+1,k+qp]=P_2(P_1(x))[k-(q+2p)+1,k+qp]=\z\one^{p+1}\z$, $G_X(x)$ is of left bound type $(\one^{p+1}\z, k+qp)$ and we are done. Otherwise $P_2(P_1(x))[k-(q+2p)+1,k]=\z\one\z$. Denote $y=P_3(P_2(P_1(x)))$. If $y[k-(q+2p)+1,k]\neq\z\one\z$, then $G_X(x)=P_4(y)$ is of left bound type $(w_{S,1},k)$ for some $S\in P$ and we proceed as in Case 5. Otherwise $y[k-(q+2p)+1,k]=\z\one\z$. If $G_X(x)[k-(q+2p)+1,k]=P_4(y)[k-(q+2p)+1,k]\neq \z\one\z$, then $G_X(x)$ is of left bound type $(u_{j,1},k')$ for some $1\leq j\leq p/K$, $k'>k$ and we are done. Otherwise $G_X(x)[-\infty,k]\in {}^\infty\z L_\ell$, the left bound of $G_X(x)$ is strictly greater than $k$ and we are done.
\item[Case 5.] Assume that $w=w_{S,i}$ for $S\in P$ and $1\leq i\leq k_S$. Then there is a minimal $t\in\N$ such that $G_X^t(x)[k-\abs{\one\z}+1,\infty]$ has prefix $w_S$. Since $w_S$ has prefix $\one\z$, it follows that $G_X^t(x)[-\infty,k]\in{}^\infty\z L_\ell$. Thus the left bound of $G_X^t(x)$ is strictly greater than $k$ and we are done.
\end{description}
\end{proof}

The same method can be used to prove the following lemma in the not necessarily sofic case, but this time Case 5 of the previous proof does not come into play.

\begin{lemma}\label{leftsynch}Let $n>\abs{\one^{p+1+p/K}}$ and assume that $x\in X$ has $n$-left bound $k$. Then there exists $t\in\Npos$ such that the $n$-left bound of $G_{X,n}^t(x)$ is strictly greater than $k$. Moreover, the $n$-left bound of $G_{X,n}^{t'}(x)$ is at least $k$ for all $t'\in\N$.\end{lemma}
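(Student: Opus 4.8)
The plan is to mirror the proof of Lemma~\ref{left} almost verbatim, dropping the one branch that relies on soficness. Concretely, I would start from $x\in X$ with $n$-left bound $k$, so $x$ is of $n$-left bound type $(w,k)$ with $w\in\{\one\z\}\cup\{\one^{p+1}\z\}\cup(\bigcup_{j=1}^{p/K}U_{j,n}'\z)$. As in the sofic proof, the first observation is that the gliders strictly to the left of the occurrence of $w$ near $k$ — i.e. the part $x[-\infty,i-1]\in{}^\infty\z L_\ell$ with $i=k-\abs{w}+1$ — travel left at constant speed $pq$ under $G_{X,n}=P_{4,n}\circ P_2\circ P_1$ (this is exactly Lemma~\ref{localshift} applied locally) and are never disturbed by what happens to the right of position $i$, since all three of $P_1,P_2,P_{4,n}$ only rewrite occurrences of words that straddle position $i$ through an element of $L_\ell$, which cannot interfere. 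This also immediately gives the ``moreover'' part: the $n$-left bound of $G_{X,n}^{t'}(x)$ can never drop below $k$.

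For the main claim I would run through the same Cases 1--4 as in the proof of Lemma~\ref{left}, with $G_X$ replaced by $G_{X,n}$ and $P_4$ by $P_{4,n}$, and with $U_j'$, $U_{j,i}'$, $u_j$, $u_{j,i}$, $n_j$ replaced by their $n$-subscripted versions $U_{j,n}'$, etc. Case~1 ($w=\one^{p+1}\z$): $P_1$ turns the local pattern $\z\one^{p+1}\z$ into $\z\one\z$, reducing to Case~4. Case~2 ($w=\one\z$): since $k$ is genuinely the $n$-left bound, the pattern to the left is not $\z\gl\z$, so again $P_1$ produces $\z\one\z$ locally and we reduce to Case~4. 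Case~3 ($w=u_{j,i}'\z$): by definition of $P_{4,n}$ via the cyclic permutation $\rho$ on each $\z V_{j,n}\z$, there is a minimal $t$ after which the padded word $u_{j,i}\z$ appears at the right spot, and after $n_j-i+1$ further steps we reach $u_j\z=\z^{c_j}\one\z^q\one^j\z$; if $j>1$ this is of $n$-left bound type with parameter $j-1$ (since $\one\z^q\one^{j-1}$ is a prefix and $\one\z^q\one^j$ has the next block of $\z$'s absorbed), so we recurse downward in $j$; if $j=1$ the local pattern is $\z\one\z$ and we enter Case~4. Case~4 ($P_1(x)$ has $\z\one\z$ near $k$): either $P_2$ creates $\z\one^{p+1}\z$ and $G_{X,n}(x)$ has $n$-left bound type $(\one^{p+1}\z,k+qp)$ with $k+qp>k$ and we are done, or $P_2\circ P_1$ leaves $\z\one\z$ in place; then $P_{4,n}$ either does nothing near $k$ (in which case $G_{X,n}(x)[-\infty,k]\in{}^\infty\z L_\ell$, so the $n$-left bound strictly exceeds $k$ and we are done) or rewrites it into some $u_{j,1}\z$-pattern, giving $n$-left bound type $(u_{j,1},k')$ with $k'>k$ and we are done.

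The only step requiring a little care — and the place where I expect the real bookkeeping to live — is the recursion in Case~3 and the verification in Case~4 that the various rewrites do not accidentally touch the already-assembled leftbound fleet to the left of position $i$; this is where one needs the overlap bookkeeping of Proposition~\ref{01} (in particular that $\z$ is synchronizing and that the only overlaps among the marker words are the trivial ones plus $\z$) together with the constraint $n>\abs{\one^{p+1+p/K}}$, which guarantees that $\one^{p+1+j}=u_{j,n_j}'$ really lies in $U_{j,n}'$ and that $\one$, $\one^{p+1}$ are the only short $\z$-free words excluded from the $U_{j,n}'$, so that the deterministic classification of $x[i,i+\cdot]$ into a unique left bound type goes through exactly as outlined after the definition of left bound type. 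Since none of this used soficness or the sets $P$, $W_S$, $W_S'$, the sofic-specific Case~5 simply never arises, and the proof closes. Formally:

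\begin{proof}
The proof is identical to that of Lemma~\ref{left}, reading $G_{X,n}$ for $G_X$, $P_{4,n}$ for $P_4$, and $U_{j,n}'$, $u_{j,i}$, $u_j$, $n_j$ for $U_j'$, $u_{j,i}$, $u_j$, $n_j$ throughout, and with the left bound type of $x$ restricted to $w\in\{\one\z\}\cup\{\one^{p+1}\z\}\cup(\bigcup_{j=1}^{p/K}U_{j,n}'\z)$. The choice $n>\abs{\one^{p+1+p/K}}$ guarantees (as in the definition of $P_{4,n}$) that $\one^{p+1+j}=u_{j,n_j}'\in U_{j,n}'$ for $1\le j\le p/K$ and that $\one,\one^{p+1}$ are the only $\z$-free words of the relevant lengths excluded from the sets $U_{j,n}'$, so the deterministic classification described after the definition of $n$-left bound type applies verbatim and narrows $w$ down uniquely. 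Cases~1 through~4 then go through word for word: Case~1 and Case~2 reduce to Case~4 via $P_1$; Case~3 uses the cyclic permutation $\rho$ to cycle $u_{j,i}\z$ forward to $u_j\z$ in finitely many steps of $G_{X,n}$ and then recurses on $j-1$ when $j>1$ and falls into Case~4 when $j=1$; and in Case~4 one of the $P_2$- and $P_{4,n}$-rewrites either produces a pattern $\one^{p+1}\z$ or $u_{j,1}\z$ strictly to the right of $k$, or leaves $G_{X,n}(x)[-\infty,k]\in{}^\infty\z L_\ell$, in each case yielding an $n$-left bound strictly greater than $k$ after some $t\in\Npos$ steps. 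The gliders strictly to the left of position $k-\abs{w}+1$ are unaffected throughout (they move left at speed $pq$ by the computation in Lemma~\ref{localshift}), since by Proposition~\ref{01} the marker words rewritten by $P_1,P_2,P_{4,n}$ overlap only trivially or along $\z$; hence the $n$-left bound of $G_{X,n}^{t'}(x)$ is at least $k$ for every $t'\in\N$. The sofic-only Case~5 does not arise.
\end{proof}
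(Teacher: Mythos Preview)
Your proposal is correct and matches the paper's approach exactly: the paper does not give a formal proof of Lemma~\ref{leftsynch} at all, but simply remarks that ``the same method can be used to prove the following lemma in the not necessarily sofic case, but this time Case~5 of the previous proof does not come into play.'' Your write-up is thus a faithful (and more detailed) expansion of what the paper intends, correctly noting that the absence of $P_3$ in $G_{X,n}=P_{4,n}\circ P_2\circ P_1$ eliminates the branch of Case~4 that would feed into Case~5.
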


For the right bounds we have a simpler definition.

\begin{definition}
If $x\notin\gf_\arr$ is a non-zero finite element of $X$, then there is a minimal $k\in\Z$ such that
\[x[k+1,\infty]\in L_\arr \z^\infty\]
and we say that $x$ has right bound $k$.
\end{definition}

\begin{lemma}\label{right}Assume that $x\in X$ has right bound $k$. Then there exists $t\in\Npos$ such that the right bound of $G_X^t(x)$ is strictly less than $k$. Moreover, the right bound of $G_X^{t'}(x)$ is at most $k$ for all $t'\in\N$.\end{lemma}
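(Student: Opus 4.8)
The plan is to prove Lemma~\ref{right} by a symmetric argument to the one used for Lemma~\ref{left}, but with the observation that the right boundary behaves much more simply because the CA $P_4$ (and $P_3$) only erode and create gliders on the \emph{left} side of a configuration. First I would recall the definition of right bound: $x$ has right bound $k$ if $k$ is minimal such that $x[k+1,\infty]\in L_\arr\z^\infty$, so $x[k+1,\infty]$ is a rightbound glider fleet tail. The key point is that $G_X = P_4\circ P_3\circ P_2\circ P_1$, and by Lemma~\ref{localshift} and the remark following it, the rightbound gliders in the tail $x[k+1,\infty]$ move to the right at constant speed $pq$ under $P_2\circ P_1$ without being affected by the part of the configuration to their left, while $P_3$ and $P_4$ act only on leftbound structures (their rewriting windows are of the form $\z^{q+1}W_S$ and $U_{j,n}\z$, which can only occur to the left of the established leftbound fleet, hence strictly left of the rightmost leftbound glider and in particular not overlapping the rightbound tail). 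So the rightbound tail is genuinely inert: if $x[k+1,\infty]\in L_\arr\z^\infty$ then $G_X(x)[k+1-pq,\infty]\in L_\arr\z^\infty$ as well, immediately giving the ``moreover'' part (the right bound of $G_X^{t'}(x)$ is at most $k$, in fact non-increasing).

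For the main claim that some iterate strictly decreases the right bound, I would look at the symbols just to the left of position $k+1$. By minimality of $k$, the configuration does not already have $x[k-\abs{\gr}-p+1,\infty]$ of the form $\z\gr\z\z^\infty$-extending pattern; more precisely, the word immediately preceding the rightbound tail cannot be completed to a further rightbound glider, so something other than a $\z$-block or a $\gr$-glider sits at the boundary. Here I would use Proposition~\ref{01}: the block between the last $\z$ before position $k$ and the tail has length divisible by $K$, and the only short non-$\z$ words that can appear (given that $x$ is $\z$-finite and the erosion from the left has not yet reached this far, or in the worst case the whole finite part is just this boundary block) are controlled. The essential mechanism is $P_1$ and $P_2$: $P_1$ converts $\z\gl\z = \z\z^q\one\z$ into $\z\one^{p+1}\z$, and $P_2$ converts $\z\one^{p+1}\z$ into $\z\one\z^q\z=\z\gr\z$ shifted, i.e. it creates a new \emph{rightbound} glider $\gr=\one^{p+1}$ and pushes it rightward by $pq$. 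Iterating, any $\one$-block adjacent to the rightbound tail that has the right length modulo $p$ gets repeatedly chewed into rightbound gliders that merge onto the tail, decreasing $k$.

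Concretely I would set up the case analysis mirroring Cases~1--4 of Lemma~\ref{left}: examine the word $w'$ that is the maximal suffix-block (not containing $\z$) ending just before position $k+1$, show using Proposition~\ref{01} and the definitions of $P_1,P_2$ that after finitely many applications of $G_X$ this block is either converted into a $\gr$-glider that attaches to the tail (strictly decreasing the right bound) or is transformed into a shorter/different block to which the argument can be reapplied with a well-founded decrease (e.g. in the length of the block, or in a parameter analogous to the index $j$). Because $P_3$ and $P_4$ do not interfere on the right, no analogue of Case~5 is needed and the argument is strictly simpler than for Lemma~\ref{left}. The main obstacle, as in Lemma~\ref{left}, is the bookkeeping of exactly which intermediate words can occur at the right boundary and verifying that each application of $G_X$ either makes definite progress or strictly reduces a terminating measure; this is where Proposition~\ref{01} (divisibility by $K$, the list of exceptional short words $\one,\one^{p+1}$) does the real work, ruling out the possibility of an infinite loop of boundary rewrites that never attaches a glider to the tail.
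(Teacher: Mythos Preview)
Your approach has a genuine gap. The paper's proof is \emph{not} a mirror of Lemma~\ref{left}; it is a contradiction argument that crucially invokes Lemma~\ref{left} itself. The reason is that $G_X$ has no mechanism to erode junk from the \emph{right}: the maps $P_3$ and $P_4$ act on patterns of the form $\z^{q+1}W_S$ and $\z^{c_{j,i}}u_{j,i}'\z$ with $c_{j,i}\geq q+1$, i.e.\ they require a long block of $\z$'s on the \emph{left} of the word being rewritten. So if an arbitrary non-glider word sits immediately to the left of the rightbound tail, there is in general nothing in $P_1,\dots,P_4$ that touches it from the right side. Your proposed case analysis (``any $\one$-block adjacent to the rightbound tail\ldots gets repeatedly chewed into rightbound gliders'') describes a mechanism that simply is not present in $G_X$. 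The paper instead argues: assuming the right bound never drops below $k$, Lemma~\ref{left} forces the left bound to eventually exceed $k+3pq$, at which point the region around $k$ is empty of both glider types and the right bound must drop --- a contradiction.

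There is also a concrete error in your ``moreover'' argument. You write that inertness of the tail gives $G_X(x)[k+1-pq,\infty]\in L_\arr\z^\infty$, but rightbound gliders move \emph{right}, so at best you get $G_X(x)[k+1+pq,\infty]\in L_\arr\z^\infty$, which does not bound the right bound by $k$. In fact the paper explicitly contemplates the possibility that the right bound of some $G_X^t(x)$ is strictly greater than $k$ (its Case~2): this can only happen via the specific pattern $P_1(G_X^{t-1}(x))[k-(p+q)+1,k+(q+1)p]=\z\one\z^q\z$, which $P_2$ turns into $\z\one^{p+1}\z=\z\gr\z$; since $P_3,P_4$ cannot alter an isolated $\z\gr\z$, this actually produces a new rightbound glider and the right bound \emph{decreases}, yielding the contradiction. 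Your non-increasing claim skips this analysis entirely.
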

\begin{proof}
Let us assume to the contrary that the right bound of $G_X^t(x)$ is at least $k$ for every $t\in\Npos$.

Assume first that the right bound of $G_X^t(x)$ is equal to $k$ for every $t\in\Npos$. By Lemma~\ref{left} the left bound of $G_X^t(x)$ is arbitrarily large for suitable choice of $t\in\Npos$, which means that for some $t\in\Npos$ $G_X^t(x)$ contains only $\gl$-gliders to the left of $k+3pq$ and only $\gr$-gliders to the right of $k$. This can happen only if $G_X^t(x)[k+1,k+3pq-1]$ does not contain any glider of either type. Then the right bound of $G_X^{t+1}(x)$ is at most $k-pq$, a contradiction.

Assume then that the right bound of $G_X^t(x)$ is strictly greater than $k$ for some $t\in\Npos$ and fix the minimal such $t$. This can happen only if $P_1(G_X^{t-1}(x))[k-(p+q)+1,k+(q+1)p]=\z\one\z^q\z$ and then $P_2(P_1(G_X^{t-1}(x)))[k-(p+q)+1,k+(q+1)p]=\z\one^{p+1}\z$. But neither $P_3$ nor $P_4$ can change occurrences of $\z\one^{p+1}\z$ in configurations (recall in particular that $\abs{w_S'}>\abs{\one^{p+1}}$ for all $S\in P$) so $G_X^t(x)[k-(p+q)+1,k+(q+1)p]=\z\one^{p+1}\z$. It follows that the right bound of $G_X^t(x)$ is at most $k-(p+q)$, a contradiction.
\end{proof}

Similarly one proves the following in the not necessarily sofic case.

\begin{lemma}\label{rightsynch}Let $n>\abs{\one^{p+1+p/K}}$, assume that $x\in X$ does not contain occurrences of words from $B^n$ and that $x$ has right bound $k$. Then there exists $t\in\Npos$ such that the right bound of $G_{X,n}^t(x)$ is strictly less than $k$. Moreover, the right bound of $G_{X,n}^{t'}(x)$ is at most $k$ for all $t'\in\N$.\end{lemma}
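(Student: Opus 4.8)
The plan is to transcribe the proof of Lemma~\ref{right} almost literally, with three substitutions: $G_X$ is replaced throughout by $G_{X,n}$, the composite $P_4\circ P_3$ is replaced by $P_{4,n}$, and every appeal to Lemma~\ref{left} is replaced by an appeal to Lemma~\ref{leftsynch}. The hypothesis that $x$ has no occurrence of a word from $B^n$ is exactly what legitimizes this last substitution, so the first point to check is that this property is preserved by $G_{X,n}$ on $\z$-finite configurations: the maps $P_1,P_2$ only rewrite blocks $\z v\z$ whose interior $B$-block is $\one$, $\z^q$, or $\one^{p+1}$ (and $\abs{\one^{p+1}}=(p+1)q<\abs{\one^{p+1+p/K}}<n$), while $P_{4,n}$ only permutes words from $U_{j,n}\z$ whose non-$\z$ parts have total length at most $n-1$ and are separated by copies of $\z$; since no rule merges $B$-blocks or lengthens one past $n-1$, every $G_{X,n}^{t'}(x)$ still avoids $B^n$, hence still has a well-defined $n$-left bound, so Lemma~\ref{leftsynch} applies to all of its iterates. (If $x\in\gf_\ell$ the statement is immediate, since $G_{X,n}(x)=\sigma^{pq}(x)$ by Lemma~\ref{localshift} and the right bound then drops by $pq$; so we may assume $x\notin\gf_\ell\cup\orb{\z^\Z}$, in which case $x$ has both an $n$-left bound and a right bound.)

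With this in place the argument runs by contradiction exactly as in Lemma~\ref{right}. Suppose the right bound of $G_{X,n}^t(x)$ is at least $k$ for all $t\in\Npos$. If it equals $k$ for every $t$, use Lemma~\ref{leftsynch} to make the $n$-left bound of $G_{X,n}^t(x)$ large enough that $G_{X,n}^t(x)$ carries only $\gl$-gliders to the left of $k+3pq$ and only $\gr$-gliders to the right of $k$, hence no glider in $[k+1,k+3pq-1]$; then one further step of $G_{X,n}$ forces the right bound down to at most $k-pq$, a contradiction. If instead the right bound first exceeds $k$ at some minimal $t$, then $G_{X,n}^{t-1}(x)$ has right bound $k$, which is only possible if $P_1(G_{X,n}^{t-1}(x))[k-(p+q)+1,k+(q+1)p]=\z\one\z^q\z$, hence $P_2(P_1(G_{X,n}^{t-1}(x)))[k-(p+q)+1,k+(q+1)p]=\z\one^{p+1}\z$; as this occurrence of $\z\one^{p+1}\z$ is left intact by the subsequent $P_{4,n}$, we get $G_{X,n}^t(x)[k-(p+q)+1,k+(q+1)p]=\z\one^{p+1}\z$, so that the right bound of $G_{X,n}^t(x)$ is at most $k-(p+q)$, again a contradiction. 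The two contradictions give both halves of the lemma.

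The only step that is not a verbatim transcription of the sofic proof is the claim that $P_{4,n}$ fixes every occurrence of $\z\one^{p+1}\z$ (in Lemma~\ref{right} this was the role of the factor $P_3$ together with the inequality $\abs{w_S'}>\abs{\one^{p+1}}$), and I expect reproving it to be the only genuine, though still routine, obstacle. Since $P_{4,n}$ acts by the marker method with marker $\z$, an occurrence of $\z\one^{p+1}\z$ can be altered only if some word from $U_{j,n}\z$ occurs overlapping the $B$-block $\one^{p+1}$; but $\one^{p+1}$ contains no symbol of $\z$, so the $\z$-runs of that word (its $\z$-prefix, its trailing $\z$, and, for the word $u_j\z$, the internal run $\z^q$ of $u_j'=\one\z^q\one^j$) must all lie outside $\one^{p+1}$. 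A short inspection of the possible shapes shows this is impossible for $u_j\z$ (the internal $\z^q$ has no room, as $p\geq 1$) and forces, for the words $u_{j,i}\z$ with $u_{j,i}'\in B^+$, that $u_{j,i}'$ equal $\one^{p+1}$ exactly. However $\abs{\one^{p+1}}=(p+1)q\equiv q\equiv K\pmod p$ while every $w\in U_{j,n}'$ satisfies $\abs{w}\equiv(j+1)K\pmod p$, so $\one^{p+1}\in U_{j,n}'$ would force $jK\equiv 0\pmod p$, i.e.\ $j=p/K$, and this case is excluded by the defining restriction $\one,\one^{p+1}\notin U_{p/K,n}'$. Hence no such overlap exists, $P_{4,n}$ fixes $\z\one^{p+1}\z$, and the adaptation of Lemma~\ref{right} goes through.
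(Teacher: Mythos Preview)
Your proposal is correct and follows exactly the approach the paper intends: the paper gives no explicit proof of this lemma beyond the sentence ``Similarly one proves the following in the not necessarily sofic case,'' and your transcription of the proof of Lemma~\ref{right} with $G_X\mapsto G_{X,n}$, $P_4\circ P_3\mapsto P_{4,n}$, and Lemma~\ref{left}$\mapsto$Lemma~\ref{leftsynch} is precisely what that sentence means. Your added verifications---that the no-$B^n$ hypothesis is $G_{X,n}$-invariant (so Lemma~\ref{leftsynch} remains applicable to all iterates), and that $P_{4,n}$ fixes every occurrence of $\z\one^{p+1}\z$ via the exclusion $\one^{p+1}\notin U_{p/K,n}'$---are exactly the points that distinguish this case from the sofic one and are left implicit in the paper.
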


By inductively applying the previous lemmas we get the following pair of theorems. Theorem~\ref{glidertranssofic} covers the fifth item of Theorem~\ref{absSofic}, the last remaining part.

\begin{theorem}\label{glidertranssofic}If $x\in X$ is a finite configuration, then for every $N\in\N$ there exist $t,N_\ell,N_\arr,M\in\N$, $N_\ell,N_\arr\geq N$ such that $G_X^t(x)[-N_\ell,N_\arr]=\z^M$, $G_X^t(x)[-\infty,-(N_\ell+1)]\in{}^\infty\z L_\ell$ and $G_X^t(x)[N_\arr+1,\infty]\in L_\arr\z^\infty$.\end{theorem}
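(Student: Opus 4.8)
The plan is to combine Lemmas~\ref{left},~\ref{right} and~\ref{localshift} via a monotonicity argument on the left and right bounds of the iterates $G_X^t(x)$.

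First I would dispose of the degenerate cases. If $x\in\orb{\z^\Z}$, then since $\z^\Z\in\gf_\ell$ and $pq$ is a multiple of $\abs{\z}$, Lemma~\ref{localshift} gives $G_X(\z^\Z)=\sigma^{pq}(\z^\Z)=\z^\Z$, so (as $G_X$ commutes with $\sigma$) $x$ itself is fixed by $G_X$ and the claim holds with $t=0$ for a suitably chosen ($\z$-aligned) window. If $x\in\gf_\ell$ (resp.\ $x\in\gf_\arr$), then $G_X^t(x)=\sigma^{tpq}(x)$ (resp.\ $\sigma^{-tpq}(x)$) by Lemma~\ref{localshift}, so for $t$ large the finitely many occurrences of $\gl$ (resp.\ $\gr$) in $x$ all lie at positions below $-N$ (resp.\ above $N$), and the required decomposition is then immediate. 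So I may assume $x\notin\gf_\ell\cup\gf_\arr\cup\orb{\z^\Z}$; then $x$ has both a left bound and a right bound.

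Next I would show these bounds diverge. By the ``moreover'' clause of Lemma~\ref{left} the left bound of $G_X^t(x)$ is nondecreasing in $t$ (and stays defined), and iterating the first clause shows it is unbounded; hence it tends to $+\infty$. Symmetrically, Lemma~\ref{right} shows the right bound tends to $-\infty$. Fixing $N$, I would choose $t$ so large that, for $y:=G_X^t(x)$, the maximal index $i$ with $y[-\infty,i-1]\in{}^\infty\z L_\ell$ (from the definition of the left bound) is $\gg N$ and the index $r$ with $y[r+1,\infty]\in L_\arr\z^\infty$ (from the definition of the right bound) is $\ll -N$; here one uses that $i$ differs from the left bound by at most a constant depending only on the construction, so that $i\to+\infty$. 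Then the two ``clean'' regions $(-\infty,i-1]$ and $[r+1,\infty)$ cover $\Z$ and overlap in a window containing $[-N,N]$ whose length grows without bound.

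The last, and hardest, step is to check that this overlap window is in fact a block of $\z$'s, flanked by clean fleets. On the overlap $y$ is simultaneously a suffix of some element of ${}^\infty\z L_\ell$ and a prefix of some element of $L_\arr\z^\infty$. Using that $\gl=\z^q\one$, $\gr=\one^{p+1}$, that no symbol of $\one$ occurs in $\z$ (Proposition~\ref{01}), and that every nonempty word of $L_\ell$ ends with a nonempty power of $\z$ while every nonempty word of $L_\arr$ begins with one, a case analysis on how an occurrence of $\gl$ (resp.\ $\gr$) inside one description could meet the other shows that no $\gl$ of the left fleet reaches into $[r+1,\infty)$ and no $\gr$ of the right fleet reaches into $(-\infty,i-1]$. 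Consequently the overlap is a run of $\z$'s, and $y$ globally looks like a left glider fleet, then a finite block of $\z$'s (arbitrarily long once $t$ is large), then a right glider fleet. It then remains to pick $N_\ell,N_\arr\geq N$, aligned to $\abs{\z}$, so that $[-N_\ell,N_\arr]$ sits strictly inside that middle block, which yields $G_X^t(x)[-N_\ell,N_\arr]=\z^M$ together with $G_X^t(x)[-\infty,-(N_\ell+1)]\in{}^\infty\z L_\ell$ and $G_X^t(x)[N_\arr+1,\infty]\in L_\arr\z^\infty$. I expect this consistency check — ruling out a glider lingering near the origin despite the left and right bounds being far apart — to be the main obstacle, and it is precisely here that the structural information about $\z$ and $\one$ from Proposition~\ref{01} is used.
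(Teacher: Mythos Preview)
Your proposal is correct and follows the same route as the paper; indeed the paper's entire proof of this theorem is the single sentence ``By inductively applying the previous lemmas we get the following pair of theorems.'' The overlap consistency check you single out as the hardest step is real and is simply left to the reader in the paper, but your sketch of its resolution (a maximal block of non-$\z$ symbols in the overlap would have length $\abs{\one}$ from the left-fleet description and $(p+1)\abs{\one}$ from the right-fleet description, which is impossible) is correct and, once observed, short rather than hard.
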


\begin{theorem}\label{glidersynch}Let $n>\abs{\one^{p+1+p/K}}$. If $x\in X$ is a finite configuration that does not contain occurrences of words from $B^n$, then for every $N\in\N$ there exist $t,N_\ell,N_\arr,M\in\N$, $N_\ell,N_\arr\geq N$ such that $G_{X,n}^t(x)[-N_\ell,N_\arr]=\z^M$, $G_{X,n}^t(x)[-\infty,-(N_\ell+1)]\in{}^\infty\z L_\ell$ and $G_{X,n}^t(x)[N_\arr+1,\infty]\in L_\arr\z^\infty$.\end{theorem}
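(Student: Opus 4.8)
The plan is to mimic the proof of Theorem~\ref{glidertranssofic} exactly, simply dropping the part of the argument that deals with the sets $W_S'$ and the CA $P_3$. So fix $n>\abs{\one^{p+1+p/K}}$ and let $x\in X$ be a finite configuration not containing any word from $B^n$. First I would observe that since the local rules of $P_1,P_2,P_{4,n}$ only ever insert blocks of the form $\z^j$, $\one^j$ or rearrange words already present, no occurrence of a word from $B^n$ can ever be created, so $G_{X,n}^{t}(x)$ avoids $B^n$ for all $t$; in particular every $G_{X,n}^t(x)$ that is not in $\gf_\ell$ and not in $\orb{\z^\Z}$ has a well-defined $n$-left bound in the sense of the corresponding definition, and every $G_{X,n}^t(x)$ not in $\gf_\arr$ has a well-defined right bound.

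Next I would run the standard ``two fleets pushing apart'' argument. If $x\in\orb{\z^\Z}$ the statement is trivial (take $t=0$). Otherwise, let $k_\ell$ be the $n$-left bound and $k_\arr$ the right bound of $x$ (with $k_\ell=-\infty$ if $x\in\gf_\ell$ and $k_\arr=+\infty$ if $x\in\gf_\arr$, handled degenerately). By Lemma~\ref{leftsynch} there is $t_1$ with the $n$-left bound of $G_{X,n}^{t_1}(x)$ strictly larger than $k_\ell$, and the $n$-left bound never drops below $k_\ell$ afterwards; iterating, the $n$-left bound of $G_{X,n}^t(x)$ tends to $+\infty$ along a suitable subsequence of times, and is eventually $\geq N$ and stays there. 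Symmetrically, by Lemma~\ref{rightsynch} the right bound of $G_{X,n}^t(x)$ tends to $-\infty$ and is eventually $\leq -N$ and stays there. Here I must be slightly careful that Lemma~\ref{rightsynch} applies at each stage: but this is exactly the point of the first paragraph, namely that the hypothesis ``$x$ does not contain occurrences of words from $B^n$'' is preserved under $G_{X,n}$, so the lemma is applicable to $G_{X,n}^t(x)$ for every $t$.

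Then I would fix $t$ large enough that the $n$-left bound $N_\ell$ of $y:=G_{X,n}^t(x)$ satisfies $N_\ell\geq N$ and the right bound $-N_\arr$ of $y$ satisfies $N_\arr\geq N$, and moreover $-N_\arr> N_\ell$ (possible since both bounds diverge monotonically in the right direction). By definition of the $n$-left bound, $y[-\infty,N_\ell']\in{}^\infty\z L_\ell$ for the relevant cut point; by definition of the right bound, $y[\,\cdot\,,\infty]\in L_\arr\z^\infty$; and between the two fleets there is only a block of $\z$'s, so $y[-N_\ell,N_\arr]=\z^M$ for the appropriate $M$ (after renaming $N_\ell,N_\arr$ to land exactly on $\z$-boundaries, which is harmless since $s=pq$ is a multiple of $\abs{\z}$ and all the glider/fleet words are concatenations of $\z$'s and $\one$'s). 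This gives the three displayed conclusions verbatim.

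The only real obstacle is bookkeeping: checking that the two ``bounds'' can be simultaneously pushed to the desired place, i.e. that advancing the $n$-left bound does not interfere with the right bound and vice versa. This is handled by the locality/finite-speed observation already used in the proofs of Lemmas~\ref{left}–\ref{rightsynch}: gliders to the left of the $n$-left bound move left at speed $pq$ without interacting with the bulk, and gliders to the right of the right bound move right at speed $pq$ without interacting with the bulk, so once the bulk between the two bounds has been entirely converted into $\z$'s and emitted gliders, the two fleets simply separate forever. Thus the induction closes and the theorem follows, in complete parallel with Theorem~\ref{glidertranssofic} but without ever invoking $P_3$ or the words $w_S'$.
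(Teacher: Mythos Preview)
Your approach is exactly the paper's: the authors' entire proof of Theorem~\ref{glidersynch} is the single sentence ``By inductively applying the previous lemmas we get the following pair of theorems,'' and you carry this out by invoking Lemmas~\ref{leftsynch} and~\ref{rightsynch} repeatedly, after first checking that the hypothesis ``no occurrence of a word from $B^n$'' is preserved under $G_{X,n}$ so that the lemmas apply at every step.

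Two small corrections. First, your justification that $G_{X,n}$ never creates a $B^n$-block is slightly misstated: $P_{4,n}$ does not only insert powers of $\one$, it can insert any $u_{j,i}'\in U_{j,n}'\subseteq B^+$. The reason no $B^n$ arises is rather that every word permuted by $P_1,P_2,P_{4,n}$ is of the form $\z\cdots\z$ with all interior $B$-blocks of length at most $n-1$ (for $P_{4,n}$ this is exactly the defining bound $|u_{j,i}'|\leq n-1$). Second, your final bookkeeping is tangled: you write ``$-N_\arr>N_\ell$'' with $N_\ell,N_\arr\geq N\geq 0$, which is impossible, and you have silently identified the $n$-left bound and the right bound of $y$ with the theorem's $N_\ell,N_\arr$, which are different quantities (the bounds locate the remaining non-glider bulk, whereas $N_\ell,N_\arr$ locate the edges of the central $\z$-block). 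The clean way to finish is: once the right bound has dropped below $-N$ and the $n$-left bound has risen above $N$, the region to the right of the right bound is in $L_\arr\z^\infty$-form and the region to the left of (the $i$ associated to) the $n$-left bound is in ${}^\infty\z L_\ell$-form; their overlap near the origin is forced to be a power of $\z$, and then Lemma~\ref{localshift} moves the two fleets apart so that suitable $N_\ell,N_\arr\geq N$ can be read off directly.
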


Our construction proves the following theorem. Recall the notions of directional dynamics.

\begin{theorem}\label{sensitiveCA}
For every infinite transitive sofic shift $X$ there exists a reversible CA $F\in\aut(X)$ that has no almost equicontinuous directions.
\end{theorem}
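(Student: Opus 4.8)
The plan is to use the diffusive glider CA $G_X$ constructed in this section as the required $F$, and to show directly that no rational slope $p_0/q_0$ can be an almost equicontinuous direction of $G_X$. Recall from Proposition~\ref{equiblock} that, since $X$ is transitive, it suffices to show that for every pair of coprime integers $(a,b)$ with $b>0$ the CA $H=\sigma^a\circ G_X^b$ has \emph{no} blocking word. So fix such an $H$ and suppose toward a contradiction that $w\in L(X)$ is a blocking word for $H$, say with parameters $e\geq r+1$ and $p\in[0,\abs{w}-e]$ as in the definition; here $r$ is a radius for $H$.

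First I would observe that the relevant ``test configurations'' are the glider fleets. Concretely, choose a $\z$-finite configuration of $X$ that has an occurrence of $w$ at the origin (such a configuration exists by transitivity: $\z$ is synchronizing, $\z^\Z\in X$, and any word can be placed between two copies of $\z$ and padded with $\z$s). By Theorem~\ref{glidertranssofic} applied with $N$ large, after some number of steps $G_X$ turns this configuration into a point whose central region is a long block of $\z$s with a leftbound glider fleet to the far left and a rightbound glider fleet to the far right. The key dynamical fact I would extract is that $G_X$ acts as an exact shift on each fleet set: $G_X(x)=\sigma^{pq}(x)$ on $\gf_\ell$ and $G_X(x)=\sigma^{-pq}(x)$ on $\gf_\arr$ (Lemma~\ref{localshift}). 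Hence $H=\sigma^a\circ G_X^b$ acts as $\sigma^{a+bpq}$ on $\gf_\ell$ and as $\sigma^{a-bpq}$ on $\gf_\arr$. Since $(a,b)$ is a fixed pair and $b>0$, at least one of the two integers $a+bpq$ and $a-bpq$ is nonzero (they differ by $2bpq\neq 0$); by a symmetric argument it is enough to treat one fleet, so assume WLOG that $a-bpq\neq 0$, i.e. $H$ acts as a nontrivial shift on the rightbound fleet set $\gf_\arr$.

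The heart of the argument is then to build two configurations that agree on a window containing $w$ but that $H$ eventually separates at the location monitored by the blocking word. The idea: a blocking word forces the values $H^n(x)[p,p+e-1]$ to depend only on $x$ restricted to a \emph{bounded} window around the block $w$, uniformly in $n$. But take two $\z$-finite configurations $x_1,x_2$ that each contain $w$ at the origin and that are identical on a huge window $[-L,L]$ around the origin — so identical that during the first $T$ steps of $H$ the ``erosion/diffusion'' dynamics inside $[-L,L]$ is the same for both (possible because $G_X$ has finite radius, so what happens in a fixed cell up to time $T$ depends only on a bounded neighborhood) — but such that far to the right $x_1$ ends in $\z^\infty$ while $x_2$ contains a single extra rightbound glider (an occurrence of $\gr$) sitting at a controlled distance. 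By Theorem~\ref{glidertranssofic} both configurations eventually clear the central region and the rightbound part becomes a genuine element of $L_\arr\z^\infty$; from then on $H$ acts on that part as the nonzero shift $\sigma^{a-bpq}$, which transports the extra glider of $x_2$ (and only of $x_2$) across the monitored window $[p,p+e-1]$ after sufficiently many further steps, while for $x_1$ that window stays in the $\z$-fleet region longer. Choosing the distance of the extra glider and the time carefully, we reach an $n$ with $H^n(x_1)[p,p+e-1]\neq H^n(x_2)[p,p+e-1]$, contradicting that $w$ is blocking.

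I expect the main obstacle to be the bookkeeping that makes the two configurations provably agree on $[p,p+e-1]$ for all of times $0,1,\dots,n-1$ and differ at time $n$; this requires (i) quantifying, via Lemmas~\ref{left}, \ref{right} and Theorem~\ref{glidertranssofic}, how long it takes the diffusion to push the fleets out past a given window and how the required waiting time grows with the size of the agreement window $L$, and (ii) checking that the extra glider in $x_2$ can be inserted ``far enough out'' that it does not perturb the central dynamics before time $n$, yet still reaches the monitored cells by a computable time. A clean way to organize (i)–(ii) is to first pass to the limit: note that the blocking property would in particular force $H^n$ to agree on $[p,p+e-1]$ for the two limiting glider-fleet configurations (one with, one without the stray glider) obtained after fully diffusing $x_1$ and $x_2$; on those limiting configurations $H$ is literally a fixed nonzero shift on the rightbound side, so a stray $\gr$ is detected in finitely many steps, immediately giving the contradiction — and then I only need enough of the finite-time estimates to justify that this limiting picture is faithfully approximated, which follows from finite radius of $G_X$ together with the convergence statement of Theorem~\ref{glidertranssofic}.
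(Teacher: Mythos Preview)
Your overall strategy coincides with the paper's: take $F=G_X$, assume some direction $a/b$ admits a blocking word $w$ for $H=\sigma^a\circ G_X^b$, and derive a contradiction by inserting an extra glider next to a $\z$-finite configuration carrying $w$. There is, however, a concrete error in your case split and glider choice.

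You reduce ``WLOG'' to $a-bpq\neq 0$ and then place an extra \emph{rightbound} glider $\gr$ to the \emph{right} of the block. For that glider ever to reach the monitored window $[p,p+e-1]$ you need the sign condition $a-bpq>0$ (so that $H$ acts on rightbound fleets as a leftward shift), not merely $a-bpq\neq 0$. In the whole range $a\le bpq$ --- which includes the basic case $a=0$ --- your extra $\gr$ drifts further right under $H$, the window sees only $\z$'s for both $x_1$ and $x_2$ forever, and no contradiction is reached. The phrase ``by a symmetric argument'' does not repair this: the symmetric hypothesis $a+bpq\neq 0$ is again only a nonvanishing condition, and placing a $\gl$ to the right still fails when $a+bpq<0$. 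What is actually needed is a split on the \emph{sign} of $a$.

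The paper makes precisely this correction: for $a\ge 0$ it inserts an extra \emph{leftbound} glider $\gl$ to the right of $w$ (so that on $\gf_\ell$ the map $H$ acts as $\sigma^{a+bpq}$ with $a+bpq>0$, guaranteeing leftward transit), and it detects the discrepancy not by ``the glider crosses the window'' but by comparing $\lvert\occ_\arr(\cdot,\gl)\cap(-\infty,-1]\rvert$ for $x$ versus $x_n$ --- an invariant that survives whatever interaction the stray $\gl$ has with the rightbound fleet created by the diffusion of $w$. The case $a\le 0$ is then genuinely symmetric. Your final ``limiting configuration'' shortcut is also not usable as stated: the fully diffused points no longer carry $w$ at the origin, so the blocking hypothesis does not apply to them directly.
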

\begin{proof}
We claim that $G_X:X\to X$ is such an automaton. To see this, assume to the contrary that there is an almost equicontinuous direction $r/s$ for coprime integers $r$ and $s$ such that $s>0$. This means that $F=\sigma^r\circ G_X^s$ is almost equicontinuous and admits a blocking word $w\in L(X)$. Since every word containing a blocking word is also blocking, we may choose $w$ so that $\z w\z\in L(X)$.

Assume first that $r\geq 0$. Define $x={}^\infty\z.w\z^\infty$ and $x_n={}^\infty\z.w\z^n\gl\z^\infty$ for all $n\in\Npos$. We claim that for some $n\in\Npos$ we can choose $t\in\N$ such that $F^t(x)[-\infty,-1]\neq F^t(x_n)[-\infty,-1]$, which would contradict $w$ being a blocking word. To see this, we apply Theorem~\ref{glidertranssofic} for some sufficiently large $N\in\N$ so that $G_X^t(x)[-N_\ell,N_\arr]=\z^M$, $G_X^t(x)[-\infty,-(N_\ell+1)]\in{}^\infty\z L_\ell$ and $G_X^t(x)[N_\arr+1,\infty]\in L_\arr\z^\infty$ for all $t$ larger than some $t_0\in\N$, where $N_\ell$, $N_\arr$ and $M$ are as in the statement of the theorem. Fix some $i\in\Npos$ such that $G_X^{t_0}(x)[\abs{w}+ip,\infty]=\z^\infty$ and for $j\in\Npos$ let $n_j=j+t_0 q$. Then $x_{n_j}={}^\infty\z.w\z^{j+t_0 q}\gl\z^\infty$ and by fixing $n=n_{i+k}$ for some sufficiently large $k\in\N$ we get $G_X^{t_0}(x_n)[N_\arr+1,\infty]\in L_\arr\z^*\z^k\gl\z^\infty$. It is possible to choose $t'\geq t_0$ so that $\occ_\arr(G_X^{t''}(x_n),\gl)\subseteq(-\infty,-1]$ for all $t''\geq t'$. Then it holds that $\abs{\occ_\arr(G_X^{t''}(x_n),\gl)}>\abs{\occ_\arr(G_X^{t''}(x),\gl)}$ for all $t''\geq t'$. Now let $t\in\N$ such that $st\geq t'$. Then $F^t(x_n)=\sigma^{rt}(G_X^{st}(x_n))$ and $F^t(x)=\sigma^{rt}(G_X^{st}(x))$, so $\abs{\occ_\arr(F^{t}(x_n),\gl)}>\abs{\occ_\arr(F^{t}(x),\gl)}$. Because we assumed that $r\geq 0$, it also follows that $\occ_\arr(F^{t}(x_n),\gl)\subseteq(-\infty,-1]$  and in particular $F^t(x)[-\infty,-1]\neq F^t(x_n)[-\infty,-1]$.

A symmetric argument yields a contradiction in the case $r\leq 0$.
\end{proof}

\begin{remark}\label{BoyleRmrk}
The assumption of $X$ being a sofic shift was used in the construction of $G_X$ only in the definition of the map $P_3$. To be more precise, we used the finiteness of the set 
\[P=\{\syn_X(\z w)\mid w\in L(X)\cap (B^K)^+,\z w\in L(X),\abs{w}>q(p+1)\}\]
and we noted that for this it is sufficient that $X$ is sofic. In fact it turns out that the soficness of $X$ is equivalent to $P$ being finite. To see the other direction, first note that if $P$ is finite then also $V=\{\syn_X(\z w)\mid w\in L(X),\z w\in L(X)\}$ is finite. As in~\cite{FF92}, we can construct a directed labeled graph called the \emph{Fischer cover} of $X$. This graph has the vertex set $V$ and an edge from $\syn_X(\z w)$ to $\syn(\z wa)$ with the label $a$ whenever $w\in A^*$, $a\in A$ and $\z wa\in L(X)$. By~\cite{FF92} the set $X'$ consisting of the labels of bi-infinite paths on this graph is dense in $X$. From the finiteness of the graph it follows that $X'$ is also compact, so $X=X'$ and $X$ is sofic.
\end{remark}

The assumption of soficness turns out to be even more essential in the context of the previous theorem. In Subsection~\ref{SpecSubSect} we will present a family of synchronizing subshifts on which it is impossible to carry out any construction analogous to that of $G_X$ in the sense that on these shifts the previous theorem does not hold.

\section{Implications related to Ryan's theorem}

In this section we discuss an application of the diffusive glider CA construction presented above to the study of the structure of the abstract group $\aut(X)$. The \emph{centralizer} of a set $S\subseteq\grp{G}$ (with respect to a group $\grp{G}$) is
\[C_{\grp{G}}(S)=\{g\in\grp{M}\mid g\circ h=h\circ g\mbox{ for every }h\in S\}.\]
In this section we consider centralizers with respect to some automorphism group $\aut(X)$ and we drop the subscript from the notation $C_{\aut(X)}(S)$. The subgroup generated by $S\subseteq\aut(X)$ is denoted by $\left<S\right>$. The following definition is by Salo from~\cite{Salo19}:

\begin{definition}\label{kDef}For a subshift $X$, let $k(X)\in\N\cup\{\infty,\bot\}$ be the minimal cardinality of a set $S\subseteq \aut(X)$ such that $C(S)=\left<\sigma\right>$ if such a set $S$ exists, and $k(X)=\bot$ otherwise.\end{definition}
It is a theorem of Ryan from~\cite{Ryan72} that $k(A^\Z)\neq\bot$, which he later generalized to $k(X)\neq\bot$ whenever $X$ is an infinite transitive SFT in~\cite{Ryan74}. This result is also presented in Theorem~7.7 of~\cite{BLR88} with an alternative proof. Section~7.6 of~\cite{Salo19} contains the following observation concerning the lower bounds of $k(X)$.
\begin{theorem}\label{soa}
Let $X$ be a subshift. The case $k(X)=0$ occurs if and only if $\aut(X)=\left<\sigma\right>$. The case $k(X)=1$ cannot occur.
\end{theorem}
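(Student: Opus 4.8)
The plan is to unwind the definition of $k(X)$ in the two extreme cases $k(X)=0$ and $k(X)=1$ and reduce everything to two elementary observations: that the centralizer of the empty set is all of $\aut(X)$, and that (by Hedlund's theorem) every automorphism of a subshift commutes with $\sigma$.

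First I would treat $k(X)=0$. The only set of cardinality $0$ is $S=\emptyset$, and the defining condition ``$g\circ h=h\circ g$ for every $h\in S$'' holds vacuously for every $g\in\aut(X)$, so $C(\emptyset)=\aut(X)$. Hence a set of cardinality $0$ witnesses $C(S)=\left<\sigma\right>$ if and only if $\aut(X)=\left<\sigma\right>$. Since $k(X)$ is a minimum and $0$ is the least possible cardinality of a set, this already gives $k(X)=0$ precisely when $\aut(X)=\left<\sigma\right>$ (and in that case $k(X)\neq\bot$, the empty set serving as a witness).

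Next I would rule out $k(X)=1$ by contradiction. Suppose some singleton $S=\{f\}$ with $f\in\aut(X)$ satisfies $C(\{f\})=\left<\sigma\right>$. Since $f$ commutes with itself, $f\in C(\{f\})=\left<\sigma\right>$, so $f=\sigma^n$ for some $n\in\Z$. But every element of $\aut(X)$ commutes with $\sigma$, hence with $\sigma^n=f$, so $C(\{f\})=\aut(X)$. Combining the two equalities gives $\aut(X)=\left<\sigma\right>$, and by the previous paragraph $k(X)=0$, contradicting $k(X)=1$ (a minimum cannot equal both $0$ and $1$).

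There is no genuine obstacle here: the whole argument is a direct consequence of the definitions together with the fact that automorphisms commute with the shift. The only points requiring a moment's care are the handling of the vacuous quantifier in the identity $C(\emptyset)=\aut(X)$, and the observation that $C(\{\sigma^n\})$ is all of $\aut(X)$ rather than something smaller.
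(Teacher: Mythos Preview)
Your argument is correct. Note that the paper does not actually give a proof of this theorem; it simply cites the observation from Section~7.6 of~\cite{Salo19}. Your proof is the standard one: $C(\emptyset)=\aut(X)$ handles the case $k(X)=0$, and for $k(X)=1$ you use that any $f$ with $C(\{f\})=\langle\sigma\rangle$ must itself lie in $\langle\sigma\rangle$, forcing $C(\{f\})=\aut(X)=\langle\sigma\rangle$ and hence $k(X)=0$. There is nothing to add.
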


For conjugate subshifts $X$ and $Y$ it necessarily holds that $k(X)=k(Y)$.

We will now show that $k(X)=2$ for all infinite transitive sofic shifts, the proof of which uses our diffusive glider CA construction and Lemma~\ref{mainlmm}. The lemma has been originally proved in~\cite{Kop19b}, and we will state it (and some associated definitions) in the generality needed.

\begin{definition}\label{gliderdef}
Given a subshift $X\subseteq A^\Z$, a \emph{diffusive glider automorphism group} is any tuple $(\grp{G},\z,\gl,\gr,s)$ (or just $\grp{G}$ when the rest of the tuple is clear from the context) where $\grp{G}\subseteq\aut(X)$ is a subgroup, $\z,\gl,\gr\in A^+$, $s\in\Npos$ and
\begin{itemize}
\item the sets $\gf_{\ell}={}^\infty\z(\gl \z\z^*)^*\z^\infty$ and $\gf_\arr={}^\infty\z(\z^*\z \gr)^*\z^\infty$ are characterized by
\begin{flalign*}
&\gf_{\ell}=\{x\in X\mid x\mbox{ is }\z\mbox{-finite and }G(x)=\sigma^s(x)\}\mbox{ and} \\
&\gf_{\arr}=\{x\in X\mid x\mbox{ is }\z\mbox{-finite and }G(x)=\sigma^{-s}(x)\}
\end{flalign*}
for some $G\in\grp{G}$
\item for every $x\in\gf_{\ell}$ it holds that $\abs{j-k}\geq\abs{\gl}$ whenever $j,k\in\occ_\ell(x,\gl)$ are distinct, i.e. the occurrences of $\gl$ do not overlap in any point of $\gf_{\ell}$ (and similarly for all $x\in\gf_{\arr}$)
\item for every $\z$-finite $x\in X$ and every $N\in\N$ there is a $G\in \grp{G}$ such that for every $i\in\Z$, $G(x)[i,i+N]\in L(\gf_{\ell})\cup L(\gf_{\arr})$.
\end{itemize}
If $\grp{G}$ is generated by a single automorphism $G\in\aut(X)$, we say that $G$ is a \emph{diffusive glider CA}.
\end{definition}

\begin{example}\label{glidertransxmpl}
Let $Y$ be an infinite transitive sofic shift. In the previous section we found a conjugate subshift $X$ on which we constructed the diffusive glider CA $G_X:X\to X$. We claim that this really is a diffusive glider CA in the sense of Definition~\ref{gliderdef} with an associated glider automorphism group $(\left<G_X\right>,\z,\gl,\gr,pq)$, where $p,q,\z,\gl,\gr$ and the fleets $\gf_\ell$ and $\gf_\arr$ are as in the previous section. 

By Lemma \ref{localshift} we know that for $i\in\{\ell,\arr\}$ and for $\delta(\ell)=1$, $\delta(\arr)=-1$
\[\gf_i\subseteq\{x\in X\mid x\mbox{ is }\z\mbox{-finite and }G_X(x)=\sigma^{\delta(i)pq}(x)\}\doteqdot S_i.\]
We prove the other inclusion when $i=\ell$, the case $i=\arr$ being similar. Assume therefore that $x\notin\gf_\ell$ is $\z$-finite and apply Theorem~\ref{glidertranssofic} for sufficiently large $M$. By Lemma~\ref{localshift} the set $\gf_\ell$ is invariant under the map $G_X$, so $G_X^t(x)\notin\gf_\ell$ and $G_X^t(x)$ contains an occurrence of $\gr$ which is shifted to the right by the map $G_X$. Therefore $G_X(G_X^t(x))\neq \sigma^{pq}(G_X^t(x))$ and $G_X^t(x)\notin S_\ell$. Since $S_\ell$ is invariant under the map $G_X$, it follows that $x\notin S_\ell$.

The second item in Definition~\ref{gliderdef} is clear and the third item follows by Theorem~\ref{glidertranssofic}.
\end{example}

We have a similar example on infinite synchronizing shifts.

\begin{example}\label{glidersynchxmpl}
Let $Y$ be an infinite synchronizing shift. In the previous section we found a conjugate subshift $X$ on which we constructed the glider CA $G_{X,n}:X\to X$ with parameter $n>\abs{\one^{p+1+p/K}}$. We claim that $(\left<\{G_{X,n}\mid n>\abs{\one^{p+1+p/K}}\}\right>,\z,\gl,\gr,pq)$ is a diffusive glider automorphism group, where $p,q,\z,\gl,\gr$ and the fleets $\gf_\ell$ and $\gf_\arr$ are as in the previous section.

Fix some $n>\abs{\one^{p+1+p/K}}$. By Lemma \ref{localshift} we know that for $i\in\{\ell,\arr\}$ and for $\delta(\ell)=1$, $\delta(\arr)=-1$
\[\gf_i\subseteq\{x\in X\mid x\mbox{ is }\z\mbox{-finite and }G_{X_n}(x)=\sigma^{\delta(i)pq}(x)\}\doteqdot S_i.\]
We prove the other inclusion when $i=\ell$, the case $i=\arr$ being similar. Assume therefore that $x\notin\gf_\ell$ is $\z$-finite. If $x$ contains no occurrences of words from $B^n$, we can use the same argument as in the previous example by using Theorem~\ref{glidersynch} instead of Theorem~\ref{glidertranssofic}. If on the other hand $x$ contains on occurrence of a word from $B^n$, let $k\in\Z$ be the maximal position at which such a word occurs. Then this word also occurs in $G_{X,n}(x)$ at position $k$, so $G_{X,n}(x)\neq\sigma^{pq}(x)$.

The second item in Definition~\ref{gliderdef} is clear. For the third item, let $x\in X$ be $\z$-finite and let $N\in\N$ be arbitrary. Fix some $n>\abs{\one^{p+1+p/K}}$ such that $x$ contains no occurrences of words from $B^n$. By Theorem~\ref{glidersynch} we can choose $t\in\N$ such that $G_{X,n}^t(x)[i,i+N]\in L(\gf_{\ell})\cup L(\gf_{\arr})$ for every $i\in\N$.
\end{example}

We also require the notion of an automorphism that fixes the orbit of a given periodic point in a given subshift.

\begin{definition}
For a subshift $X\subseteq A^\Z$ and a word $w\in A^+$ such that $w^\Z\in X$ denote $\aut(X,w)=\{F\in\aut(X)\mid F(\orb{w^\Z})=\orb{w^\Z}\}$.
\end{definition}

\begin{lemma}[\cite{Kop19b}, Lemma~1]\label{mainlmm}
Let $X\subseteq A^\Z$ be a subshift with a diffusive glider automorphism group $(\grp{G},\z,\gl,\gr,s)$ such that $\z$-finite configurations are dense in $X$. Assume that there is a strictly increasing sequence $(N_m)_{m\in\N}\in\N^\N$ and a sequence $(G_m)_{m\in\N}\in \grp{G}^\N$ such that for any $x\gr\z^\infty\in\gf_\arr$, ${}^\infty\z\gl y\in\gf_\ell$ we have
\begin{itemize}
\item $x\gr.\z^{N_m}\gl y\in X$
\item $G_m(x\gr.\z^N\gl y)=x\gr\z.\z^N\z\gl y$ for every $N>N_m$ such that $x\gr.\z^N\gr y\in X$ 
\item $G_m(x\gr.\z^{N_m}\gl y)=x\z\gr.\z^{N_m}\gl\z y$.
\end{itemize}
Then $C(\graph{G})\cap\aut(X,\z)=\left<\sigma\right>$.
\end{lemma}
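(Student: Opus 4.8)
The plan is to show that every $\phi\in C(\grp{G})\cap\aut(X,\z)$ is a power of $\sigma$; the reverse inclusion is clear. Since $\phi$ fixes $\orb{\z^\Z}$ setwise, $\phi(\z^\Z)=\sigma^a(\z^\Z)$ for some $a\in\Z$, and after replacing $\phi$ by $\sigma^{-a}\phi$ (still an element of $C(\grp{G})\cap\aut(X,\z)$) I may assume $\phi(\z^\Z)=\z^\Z$; it then suffices to show $\phi\in\left<\sigma\right>$. First I would record two routine observations: since $\phi$ has finite radius and fixes $\z^\Z$, it maps $\z$-finite configurations to $\z$-finite configurations; and since $\phi$ commutes with $\sigma$ and with the automorphism $G$ characterising $\gf_\ell,\gf_\arr$ in Definition~\ref{gliderdef} (and $\phi^{-1}$ is again in $C(\grp{G})\cap\aut(X,\z)$), we get $\phi(\gf_\ell)=\gf_\ell$ and $\phi(\gf_\arr)=\gf_\arr$. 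By density of the $\z$-finite configurations and continuity of $\phi$, it is then enough to prove that $\phi(x)=\sigma^c(x)$ for every $\z$-finite $x$, for one fixed $c\in\Z$.

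The key step is the claim that there is an integer $c$ with $\phi(z)=\sigma^c(z)$ for all $z\in\gf_\ell\cup\gf_\arr$, i.e.\ $\phi$ translates each glider fleet rigidly by the same amount and in particular never splits or merges gliders. The driving force is the prescribed behaviour of the $G_m$. Specialising their defining identities to $x={}^\infty\z$ and $y=\z^\infty$ (and using ${}^\infty\z\z={}^\infty\z$, $\z\z^\infty=\z^\infty$), one sees that $w_N:={}^\infty\z\gr.\z^N\gl\z^\infty$ satisfies $G_m(w_{N_m})=w_{N_m}$, while $G_m(w_N)=\sigma^{\abs{\z}}(w_{N+2})$ whenever $N>N_m$ and $w_N\in X$. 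Because $\phi$ has finite radius and both $\phi({}^\infty\z.\gr\z^\infty)$ and $\phi({}^\infty\z.\gl\z^\infty)$ agree with $\z^\Z$ outside bounded windows, $\phi(w_N)$ is, for large $N$ and up to an overall shift, again a collision configuration $x''\gr.\z^{N''}\gl y''$ with $x''\gr\z^\infty\in\gf_\arr$, ${}^\infty\z\gl y''\in\gf_\ell$, whose separation $N''$ equals $N+\delta$ for some fixed $\delta\in\Z$. Applying $\phi$ to the two displayed identities and using $\phi G_m=G_m\phi$, I would argue: $\phi(w_{N_m})$ is $G_m$-fixed, which is impossible when its separation $N_m+\delta$ exceeds $N_m$ (there $G_m$ strictly increases the separation), so $\delta\le 0$; and from $G_m(\phi(w_N))=\sigma^{\abs{\z}}\phi(w_{N+2})$ the separation on the right is $N+\delta+2$, while on the left it is $N+\delta+2$ if $N+\delta>N_m$ but only $N+\delta$ if $N+\delta=N_m$, so $N+\delta=N_m$ cannot occur for $N>N_m$, which forces $\delta\ge 0$. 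Hence $\delta=0$, so $N''=N_m$; and $G_m$-fixedness of $\phi(w_{N_m})$ then forces each of $x''\gr$, $\gl y''$ to be a single glider, since for a fleet with at least two gliders $G_m$ enlarges it by an extra copy of $\z$. Reading off glider positions in $\phi(w_{N_m})$ shows that ${}^\infty\z.\gr\z^\infty$ and ${}^\infty\z.\gl\z^\infty$ are translated by a common amount $c$. Finally I would extend this to fleets with $n$ gliders by induction on $n$: a limiting argument (pushing an outermost glider off to infinity) together with the finite radius of $\phi$ settles all configurations in which consecutive gliders are more widely spaced than the radius of $\phi$, and the finitely many remaining ``close-packed'' spacing patterns are handled by re-running the $G_m$-fixed-point bookkeeping on collision configurations whose fleet parts carry those patterns.

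Granting the claim, I would conclude as follows. Fix a $\z$-finite $x$, let $r$ bound the radius of $\phi$, choose $N\ge 2(r+\abs{c})$, and by the third item of Definition~\ref{gliderdef} pick $G\in\grp{G}$ with $G(x)[i,i+N]\in L(\gf_\ell)\cup L(\gf_\arr)$ for all $i$. Then every window of $G(x)$ long enough to determine an output symbol of $\phi$ occurs inside some $z\in\gf_\ell\cup\gf_\arr$, on which $\phi$ acts as $\sigma^c$; comparing local rules gives $\phi(G(x))=\sigma^c(G(x))$. Since $\phi$ and $\sigma^c$ both commute with $G$ and $G$ is injective, $\phi(x)=\sigma^c(x)$. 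Density and continuity then give $\phi=\sigma^c\in\left<\sigma\right>$, and undoing the normalisation yields the original $\phi\in\left<\sigma\right>$. I expect the main obstacle to be the claim of the second paragraph --- specifically its last step, ruling out that $\phi$ splits or merges clusters of gliders packed more tightly than its radius --- since that is precisely where one must exploit the full combinatorial content of the $G_m$-identities rather than mere continuity.
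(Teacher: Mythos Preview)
The paper does not contain a proof of this lemma: it is stated with an explicit citation to \cite{Kop19b}, Lemma~1, and no argument is given here. So there is nothing in the present paper to compare your attempt against.

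That said, your overall strategy is the natural one and is essentially the approach of the cited reference: normalise so that $\phi(\z^\Z)=\z^\Z$, use the characterisation of $\gf_\ell,\gf_\arr$ via $G$ to see that $\phi$ preserves each fleet set, analyse the two-glider collision configurations $w_N$ via the $G_m$-identities to pin down a single shift $c$, and then pass to arbitrary $\z$-finite points through the third item of Definition~\ref{gliderdef}. Two places in your sketch deserve tightening. First, in the $\delta\ge 0$ step you need separation $N+\delta$ to actually land on $N_m$ for some admissible $N>N_m$; the clean way is to take $N=N_m-\delta$ when $\delta<0$, so that $\phi(w_N)$ has separation exactly $N_m$, and then compare $G_m(\phi(w_N))$ (separation $N_m$) with $\phi(G_m(w_N))$ (separation $N_m+2$). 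Second, your induction on the number of gliders is the genuinely delicate part, as you note yourself: the limiting argument handles widely spaced fleets, but for tightly packed fleets you must again feed them into the $G_m$-identities (now with nontrivial $x$ or $y$) and use that the third bullet inserts an extra $\z$ into the fleet, which is incompatible with $\phi$ commuting unless the fleet is already a rigid translate. Once those two points are written out carefully, the argument goes through.
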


As earlier, let $X$ be an infinite synchronizing shift of the form given in Proposition~\ref{01} and consider the notation of Section~\ref{glidertransSect}. First we define maps $F_1,F_2:X\to X$ as follows. In any $x\in X$,
\begin{itemize}
\item $F_1$ replaces every occurrence of $\z\gr\z\z\z\gl\z$ by $\z\gr\z\z\gl\z\z$ and vice versa
\item $F_2$ replaces every occurrence of $\z\gr\z\z\gl\z$ by $\z\z\gr\z\gl\z$ and vice versa.
\end{itemize}
It is easy to see that these maps are well-defined automorphisms of $X$. The automorphism $F:X\to X$ is then defined as the composition $F_2\circ F_1$. $F$ has the following properties. First, it replaces any occurrence of $\z\gr \z\z\z\gl \z$ by $\z\z\gr\z\gl\z\z$. Second, if $x\in X$ is a configuration containing only gliders $\gl$ and $\gr$ separated by words from $\z^+$ and if every occurrence of $\gl$ is sufficiently far from every occurrence of $\gr$, then $F(x)=x$.

\begin{proposition}\label{PropRyan}Let an infinite transitive sofic subshift $X\subseteq A^\Z$ and $G_X,F:X\to X$ be as above. Then $C(\left<G_X,F\right>)=\left<\sigma\right>$.\end{proposition}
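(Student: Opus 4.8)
The strategy is to show the centralizer inclusions in two stages, mirroring the two named hypotheses of the machinery we have available. Write $\grp{G}=\left<G_X\right>$ for the cyclic diffusive glider automorphism group from Example~\ref{glidertransxmpl}. The first step is to verify the hypotheses of Lemma~\ref{mainlmm} for this $\grp{G}$. Density of $\z$-finite configurations in $X$ is immediate since $\z$ is synchronizing and $X$ is transitive (glue $^\infty\z$ on both sides of any word of $L(X)$). For the sequence hypothesis, I would exhibit concrete maps: the first two bullets just assert that widely separated $\gl$- and $\gr$-fleets are left fixed by $G_X$ except that they are transported at speeds $\pm pq$, which is exactly Lemma~\ref{localshift}; so one takes $G_m$ to be a power of $G_X$ of the form $G_X^{t_m}\sigma^{j_m}$ chosen so that over $t_m$ steps the left fleet has moved exactly $\abs{\z}$ cells relative to where it sits after the shift, i.e. set $N_m$ large enough that the two fleets in $x\gr.\z^{N}\gl y$ never interact within $t_m$ steps, so $G_X^{t_m}$ acts as $\sigma^{pq t_m}$ on the $\gr$-part and $\sigma^{-pq t_m}$ on the $\gl$-part; composing with a suitable $\sigma^{j_m}$ lands us in the stated normal forms. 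The delicate point is the third bullet, which pins down $G_m$ at the threshold value $N=N_m$: here the two fleets are just close enough that one extra $\z$ is absorbed on one side and emitted on the other, and this has to be arranged by picking $N_m$ precisely at the boundary of the "no interaction" regime — this is the part that requires genuine bookkeeping with the local rules of $P_1,P_2$, and it is essentially the computation already carried out in \cite{Kop19b}. From Lemma~\ref{mainlmm} we conclude $C(\grp{G})\cap\aut(X,\z)=\left<\sigma\right>$, hence $C(\left<G_X,F\right>)\cap\aut(X,\z)\subseteq\left<\sigma\right>$.

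The second step is to remove the restriction to $\aut(X,\z)$, i.e. to show that any $H\in C(\left<G_X,F\right>)$ automatically fixes the orbit $\orb{\z^\Z}$. This is where $F$ earns its place in the generating set. The idea is that $H$ commutes with $G_X$, so $H$ maps $G_X$-periodic-up-to-shift points to $G_X$-periodic-up-to-shift points, and in particular it must map the glider fleet sets $\gf_\ell,\gf_\arr$ into the analogous sets for the conjugated dynamics; combined with commuting with $F$, which acts nontrivially precisely on configurations where a $\gr$-glider sits close to a $\gl$-glider (it swaps $\z\gr\z\z\z\gl\z \leftrightarrow \z\z\gr\z\gl\z\z$) and trivially once they are far apart, one pins down how $H$ must treat $\z$-blocks. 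Concretely: take $x\in\gf_\ell$; it satisfies $G_X(x)=\sigma^{pq}(x)$, so $G_X(H(x))=H(G_X(x))=H(\sigma^{pq}(x))=\sigma^{pq}(H(x))$, which by the characterization in Example~\ref{glidertransxmpl} (the set $\{z : G_X(z)=\sigma^{pq}(z)\}$ equals $\gf_\ell$, for $\z$-finite $z$; non-$\z$-finite fixed points need a separate easy remark) forces $H(x)\in\gf_\ell$ once one knows $H$ preserves $\z$-finiteness. Preservation of $\z$-finiteness follows because $\z^\Z$ is, by construction, a point whose orbit closure structure is detected by $G_X$ and $F$: the uniformly recurrent point $\z^\Z$ and its shifts are characterized as the unique minimal subsystem on which both $G_X$ and $F$ act as shifts of the same prescribed speed $pq$ and $0$ respectively. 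Then $H(\z^\Z)$ must be a configuration sharing these properties, and the only such configurations are elements of $\orb{\z^\Z}$; hence $H\in\aut(X,\z)$.

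Combining the two steps, $C(\left<G_X,F\right>)=C(\left<G_X,F\right>)\cap\aut(X,\z)\subseteq\left<\sigma\right>$, and since $\sigma$ manifestly commutes with both $G_X$ and $F$ (both are CA, hence shift-commuting) we get the reverse inclusion, so $C(\left<G_X,F\right>)=\left<\sigma\right>$.

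\textbf{Main obstacle.} The genuinely delicate part is the second step: proving that every $H$ in the centralizer fixes $\orb{\z^\Z}$ setwise, i.e. that $C(\left<G_X,F\right>)\subseteq\aut(X,\z)$. The clean way to see this is to argue that $G_X$ and $F$ together dynamically characterize the fleet sets and the $\z$-background without reference to the alphabet — one wants: $H$ commutes with $G_X$ $\Rightarrow$ $H$ permutes $\{x : G_X^n(x)=\sigma^{ns}(x)\ \forall n\}$ and the analogous right set, and then $H$ commuting with $F$ rigidifies the relative positions of gliders enough that $H$ must act affinely on $\z$-runs, forcing $H(\z^\Z)\in\orb{\z^\Z}$. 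Making "rigidifies" precise — ruling out that $H$ could, say, replace the background $\z^\Z$ by some other periodic point of the same period on which $G_X$ and $F$ happen to act the same way — is the crux and will require using that $\z$ is synchronizing together with the explicit collision behavior of $F$ on adjacent gliders; I expect this to be the longest part of the actual proof.
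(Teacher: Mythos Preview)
Your two-step outline is right, but the roles you assign to $G_X$ and $F$ in those two steps are essentially swapped relative to what actually works.

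\textbf{Step 1.} You take $\grp{G}=\langle G_X\rangle$ and propose $G_m$ ``of the form $G_X^{t_m}\sigma^{j_m}$''. First, this is not a power of $G_X$, and $\sigma^{j_m}$ is not in $\langle G_X\rangle$, so such $G_m$ do not lie in your $\grp{G}$ as Lemma~\ref{mainlmm} requires. More importantly, no element of $\langle G_X,\sigma\rangle$ can satisfy the second and third bullets of Lemma~\ref{mainlmm} simultaneously: any such element acts on a configuration $x\gr.\z^N\gl y$ with non-interacting fleets by a \emph{uniform} translation of each fleet, so it cannot produce one displacement pattern for all $N>N_m$ and the \emph{opposite} pattern precisely at $N=N_m$. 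The threshold anomaly has to be manufactured, and that is exactly what $F$ is built for. The paper takes $\grp{G}=\langle G_X,F\rangle$, $N_m=2mq+3$ and $G_m=G_X^{-(m+1)}\circ F\circ G_X^m$: run $G_X$ forward until the inner $\gr,\gl$ pair is at distance exactly $3$ (in units of $\z$), let $F$ perform its local swap $\z\gr\z\z\z\gl\z\leftrightarrow\z\z\gr\z\gl\z\z$ there and nowhere else, then run $G_X$ backward one step further. Your remark that this is ``essentially the computation already carried out in~\cite{Kop19b}'' is misleading; that reference proves Lemma~\ref{mainlmm} itself, but verifying its hypotheses here genuinely requires $F$.

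\textbf{Step 2.} You expect this to be the hard part and plan to use $F$ to ``rigidify'' glider positions. In fact the paper handles it in a short paragraph using only $G_X$. The key structural fact is that each $P_k$ in the definition of $G_X$ satisfies $P_k(x)[i]=x[i]$ whenever no occurrence of $\z$ lies at or to the left of position $i$. Hence if $H(\z^\Z)=w^\Z\notin\orb{\z^\Z}$, then $G_X(w^\Z)=w^\Z$; taking $x={}^\infty\z.\gl\z^\infty$, the left tail of $G_X^t(H(x))$ stays equal to ${}^\infty w$ at a fixed position for all $t$, whereas $H(G_X^j(x))=H(\sigma^{jpq}(x))$ carries the ``non-$w$'' part of $H(x)$ arbitrarily far to the left, contradicting $HG_X=G_XH$. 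Your proposed characterization of $\z^\Z$ as ``the unique minimal subsystem on which $G_X$ and $F$ act as shifts of speeds $pq$ and $0$'' is not obviously correct (other periodic points of period dividing $pq$ could in principle satisfy it) and is in any case unnecessary.
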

\begin{proof}Let $(\left<G_X\right>,\z,\gl,\gr,pq)$ be the diffusive glider automorphism group from Example~\ref{glidertransxmpl}. If we define $\grp{G}=\left<G_X,F\right>$, then it directly follows that $(\grp{G},\z,\gl,\gr,pq)$ is also a diffusive glider automorphism group of $X$. We want to use Lemma~\ref{mainlmm} to show that $C(\grp{G})\cap\aut(X,\z)=\left<\sigma\right>$.

Recall that we denote $p=\abs{\z}$, $q=\abs{\one}$. Using the same notation as in the statement of Lemma~\ref{mainlmm}, let $(N_m)_{m\in\N}$ with $N_m=2mq+3$ and $(G_m)_{m\in\N}$ with $G_m=G_X^{-(m+1)}\circ F\circ G_X^m$. Let $x\gr\in{}^\infty\z L_r$, $\gl y\in L_\ell\z^\infty$ be arbitrary. Fix some $m\in\N$. Since $\z$ is synchronizing in $X$, it is clear that $x\gr.\z^{N_m}\gl y\in X$ and it is easy to verify that 
\begin{itemize}
\item $G_m(x\gr.\z^N\gl y)=x\gr\z.\z^N\z\gl y$ for $N>N_m$
\item $G_m(x\gr.\z^{N_m}\gl y)=x\z\gr.\z^{N_m}\gl\z y$.
\end{itemize}
Therefore $C(\grp{G})\cap\aut(X,\z)=\left<\sigma\right>$.

Now let $H\in C(\grp{G})$ be arbitrary. Let us show that $H\in\aut(X,\z)$. Namely, assume to the contrary that $H(\z^\Z)=w^\Z\notin\orb{\z^\Z}$ for some $w=w_1\cdots w_p$ ($w_i\in A$). The maps $P_k$ in the definition of $G_X$ have been defined so that $P_k(x)[i]=x[i]$ whenever $x$ contains occurrences of $\z$ only at positions strictly greater than $i$, so in particular $G_X(w^\Z)=w^\Z$. Consider $x={}^\infty\z.\gl\z^\infty\in\gf_\ell$ with the glider $\gl$ at the origin. Note that $H(x)[(i-1)p,ip-1]\neq w$ for some $i\in\Z$ (otherwise $H(x)=w^\Z=H(\z^\Z)$, contradicting the injectivity of $H$) and $H(x)[-\infty,ip-(jq)p-1]=\cdots www$ for some $j\in\Npos$. By the earlier observation on the maps $P_k$ it follows that $G_X^t(H(x))[-\infty, ip-(jq)p-1]=\cdots www$ for every $t\in\Z$ but $H(G_X^j(x))[ip-(j+1)qp,ip-(jq)p-1]=H(\sigma^{(pq)j}(x))[ip-(j+1)qp,ip-(jq)p-1]=H(x)[ip-qp,ip-1]\neq w^q$, contradicting the commutativity of $H$ and $G_X$. Thus $H\in\aut(X,\z)$.

We have shown that $H\in C(\grp{G})\cap\aut(X,\z)=\left<\sigma\right>$, so we are done.
\end{proof}

\begin{theorem}[Finitary Ryan's theorem]\label{transRyan}$k(X)=2$ for every infinite transitive sofic shift $X$.\end{theorem}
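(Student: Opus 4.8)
The plan is to reduce the statement $k(X)=2$ to the results already assembled in this section, so the proof becomes essentially a matter of combining Proposition~\ref{PropRyan} with the general lower bound of Theorem~\ref{soa}. First I would note that $k(X)$ is an isomorphism invariant, so (invoking the remark that conjugate subshifts have equal $k$-values) it suffices to prove the claim for the particular conjugate shift $X$ of the form given in Proposition~\ref{01} on which the diffusive glider CA $G_X$ and the automorphism $F$ of this section were constructed. For such $X$, Proposition~\ref{PropRyan} gives $C(\left<G_X,F\right>)=\left<\sigma\right>$, which exhibits a set $S=\{G_X,F\}\subseteq\aut(X)$ of cardinality at most $2$ with $C(S)=\left<\sigma\right>$; hence $k(X)\leq 2$.

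For the matching lower bound $k(X)\geq 2$ I would appeal directly to Theorem~\ref{soa}: the case $k(X)=1$ cannot occur, and the case $k(X)=0$ occurs only when $\aut(X)=\left<\sigma\right>$, which fails for infinite transitive sofic shifts (for instance because the marker constructions of Section~\ref{markerSect}, or simply any nontrivial $P_i$ from Section~\ref{glidertransSect}, furnish automorphisms not equal to any power of $\sigma$). Therefore $k(X)\geq 2$, and together with the upper bound we conclude $k(X)=2$. One should also check that the set $S$ we used genuinely has two distinct elements, i.e. $G_X\neq F$; this is clear since $G_X$ acts nontrivially on glider fleet configurations $\gf_\ell$ by shifting them, whereas $F$ fixes any configuration consisting of widely separated gliders, so they are not the same map. (Even if one worried about equality, a set of cardinality $\leq 2$ still suffices for the inequality $k(X)\leq 2$.)

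The one point that requires a sentence of care is that Proposition~\ref{PropRyan} presupposes that $X$ has been put in the normal form of Proposition~\ref{01} with a distinguished synchronizing deterministic word $\z$; so in the write-up I would begin by invoking Theorem~\ref{absSofic} (or equivalently the combination of Propositions~\ref{0prim} and~\ref{01}) to pass to such an $X$, observe that $k$ is unchanged under this conjugacy, apply Proposition~\ref{PropRyan} there, and transport the conclusion back. I do not expect any real obstacle here: all the analytic content — the diffusion of finite configurations into glider fleets, the commutation computations with $G_m=G_X^{-(m+1)}\circ F\circ G_X^m$, and the argument forcing $H\in\aut(X,\z)$ — has already been discharged in Proposition~\ref{PropRyan} via Lemma~\ref{mainlmm}. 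The main (and only) thing to be careful about is making the logical chain explicit: upper bound from Proposition~\ref{PropRyan}, lower bound from Theorem~\ref{soa} plus nontriviality of $\aut(X)$, and invariance of $k$ under conjugacy to justify reducing to the normal form.

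\begin{proof}
By Theorem~\ref{absSofic} (equivalently, by combining Propositions~\ref{0prim} and~\ref{01}) there is a subshift $X'$ conjugate to $X$ which has the form required in the construction of Section~\ref{glidertransSect}, so that the diffusive glider CA $G_{X'}:X'\to X'$ and the automorphism $F:X'\to X'$ described above Proposition~\ref{PropRyan} are defined. Since $k$ is an isomorphism invariant of the automorphism group, $k(X)=k(X')$, and it suffices to show $k(X')=2$.

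For the upper bound, Proposition~\ref{PropRyan} gives $C(\left<G_{X'},F\right>)=\left<\sigma\right>$. Thus the set $S=\{G_{X'},F\}\subseteq\aut(X')$ has cardinality at most $2$ and satisfies $C(S)=\left<\sigma\right>$, so $k(X')\leq 2$.

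For the lower bound, recall from Theorem~\ref{soa} that $k(X')=1$ cannot occur, and that $k(X')=0$ occurs only when $\aut(X')=\left<\sigma\right>$. The latter is impossible here: the maps $P_1,P_2$ constructed in Section~\ref{glidertransSect} are nontrivial automorphisms of $X'$ which do not coincide with any power of $\sigma$ (for instance, $P_1$ fixes $\z^\Z$ but moves a configuration containing an occurrence of $\z\gl\z$, whereas every nonzero power of $\sigma$ moves $\z^\Z$). Hence $k(X')\geq 2$.

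Combining the two bounds, $k(X')=2$, and therefore $k(X)=2$.
\end{proof}
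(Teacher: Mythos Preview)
Your proof is correct and follows exactly the same route as the paper's: pass to the normal form of Proposition~\ref{01}, invoke Proposition~\ref{PropRyan} for the upper bound $k(X)\leq 2$, and rule out $k(X)\in\{0,1\}$ via Theorem~\ref{soa} together with $\aut(X)\neq\left<\sigma\right>$.

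One small slip in your justification that $P_1\notin\left<\sigma\right>$: you write that ``every nonzero power of $\sigma$ moves $\z^\Z$'', but $\z^\Z$ is $p$-periodic with $p=\abs{\z}$, so $\sigma^{kp}(\z^\Z)=\z^\Z$ for all $k$. An easy fix is to note instead that $P_1$ is an involution distinct from the identity, so $P_1=\sigma^k$ would force $\sigma^{2k}=\id$ and hence $k=0$ (as $X$ is infinite), a contradiction; alternatively, $P_1$ fixes aperiodic configurations (any point avoiding the words in $\z B_1\z$), which forces $k=0$. The paper itself simply asserts ``clearly $\aut(X)\neq\left<\sigma\right>$'' without further comment.
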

\begin{proof}Every nontrivial mixing sofic shift is conjugate to a subshift $X$ of the form given in Proposition~\ref{01}, so $k(X)\leq 2$ follows from the previous proposition. Clearly $\aut(X)\neq\left<\sigma\right>$, so by Theorem~\ref{soa} it is not possible that $k(X)<2$ and therefore $k(X)=2$.
\end{proof}

Ryan's result $k(X)\neq\bot$ can probably be generalized to synchronizing subshifts using the same type of argument as in~\cite{Ryan74}, but we have not seen this stated explicitly in print. We now outline an alternative proof in the glider CA framework.

\begin{proposition}\label{PropSynchRyan}Let an infinite synchronizing subshift $X\subseteq A^\Z$ and $G_{X,n},F:X\to X$ be as above. Then $C(\left<\{G_{X,n}\mid n>\abs{\one^{p+1+p/K}}\}\cup \{F\}\right>)=\left<\sigma\right>$.\end{proposition}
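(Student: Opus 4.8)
The plan is to mirror the proof of Proposition~\ref{PropRyan} almost verbatim, replacing the single diffusive glider CA $G_X$ with the family $\{G_{X,n}\mid n>\abs{\one^{p+1+p/K}}\}$ and using the synchronizing analogues (Theorem~\ref{glidersynch}, Example~\ref{glidersynchxmpl}) of the sofic statements. Write $\grp{G}=\left<\{G_{X,n}\mid n>\abs{\one^{p+1+p/K}}\}\cup\{F\}\right>$. By Example~\ref{glidersynchxmpl} the tuple $(\left<\{G_{X,n}\}\right>,\z,\gl,\gr,pq)$ is a diffusive glider automorphism group, and adjoining $F$ does not destroy this (the fleet-characterization and non-overlap conditions only need the existence of \emph{some} witnessing map, and the third condition is witnessed by powers of a single $G_{X,n}$), so $(\grp{G},\z,\gl,\gr,pq)$ is also a diffusive glider automorphism group. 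Since $X$ is synchronizing of the form of Proposition~\ref{01}, the $\z$-finite configurations are dense in $X$. Thus Lemma~\ref{mainlmm} applies once we exhibit the sequences $(N_m)$ and $(G_m)\in\grp{G}^\N$ with the three listed properties; this yields $C(\grp{G})\cap\aut(X,\z)=\left<\sigma\right>$.

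For the sequences, first I would fix, for each $m$, a parameter $n_m>\abs{\one^{p+1+p/K}}$ and set $N_m=2mq+3$ exactly as in Proposition~\ref{PropRyan}, with $G_m=G_{X,n_m}^{-(m+1)}\circ F\circ G_{X,n_m}^m$. Given arbitrary $x\gr\in{}^\infty\z L_\arr$ and $\gl y\in L_\ell\z^\infty$, synchronizingness of $\z$ gives $x\gr.\z^{N_m}\gl y\in X$, and the same direct computation as before — $G_{X,n_m}^m$ shifts each glider fleet by $pq$ in its direction and leaves them intact by Lemma~\ref{localshift}, $F$ either does nothing or executes the single swap on a well-separated $\gr$--$\gl$ pair, and $G_{X,n_m}^{-(m+1)}$ shifts back — verifies $G_m(x\gr.\z^N\gl y)=x\gr\z.\z^N\z\gl y$ for $N>N_m$ and $G_m(x\gr.\z^{N_m}\gl y)=x\z\gr.\z^{N_m}\gl\z y$. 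Here one must check that after $m$ applications of $G_{X,n_m}$ the configuration is already purely a pair of opposing glider fleets sufficiently separated; since the input already consists of gliders separated by blocks of $\z$, Lemma~\ref{localshift} shows it stays that way, so this is routine. The only subtlety is that different $G_m$ may use different parameters $n_m$, but this is harmless: each $G_{X,n_m}$ individually lies in $\grp{G}$, hence so does each $G_m$.

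It remains to upgrade $C(\grp{G})\cap\aut(X,\z)=\left<\sigma\right>$ to $C(\grp{G})=\left<\sigma\right>$, i.e. to show every $H\in C(\grp{G})$ fixes $\orb{\z^\Z}$. I would reproduce the second half of the proof of Proposition~\ref{PropRyan}: suppose $H(\z^\Z)=w^\Z\notin\orb{\z^\Z}$. The maps $P_1,P_2,P_{4,n}$ composing $G_{X,n}$ all satisfy $P_k(x)[i]=x[i]$ whenever $x$ has occurrences of $\z$ only at positions $>i$ (they are marker automorphisms anchored at $\z$), so $G_{X,n}(w^\Z)=w^\Z$ and more generally $G_{X,n}^t(z)[-\infty,j]$ is unchanged whenever $z[-\infty,j]$ is a tail of $^\infty w$. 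Applying this to $x={}^\infty\z.\gl\z^\infty\in\gf_\ell$ and using that $H(x)\ne w^\Z$ forces a position $i$ with $H(x)[(i-1)p,ip-1]\ne w$ while $H(x)[-\infty,ip-(jq)p-1]=\cdots www$ for some $j$, the identity $H(G_{X,n}^j(x))=H(\sigma^{(pq)j}(x))=\sigma^{(pq)j}(H(x))$ (valid because $G_{X,n}^j(x)=\sigma^{(pq)j}(x)$ by Lemma~\ref{localshift}) together with commutativity of $H$ and $G_{X,n}$ produces a contradiction exactly as before. Hence $H\in\aut(X,\z)$, so $H\in\left<\sigma\right>$, completing the proof. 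The main obstacle, such as it is, is purely bookkeeping: confirming that mixing of parameters $n_m$ across the sequence $(G_m)$ causes no problem for Lemma~\ref{mainlmm} and that the ``erosion'' dynamics of $P_{4,n_m}$ does not interfere during the first $m$ steps — both follow from the fact that the relevant configurations are already in glider-fleet form and from the invariance in Lemma~\ref{localshift}, so no genuinely new argument is needed beyond what appears in Proposition~\ref{PropRyan}.
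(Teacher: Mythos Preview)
Your proposal is correct and follows essentially the same approach as the paper: establish the diffusive glider automorphism group via Example~\ref{glidersynchxmpl}, then rerun the proof of Proposition~\ref{PropRyan} with $G_{X,n}$ in place of $G_X$. The only difference is that the paper simply fixes a single $n>\abs{\one^{p+1+p/K}}$ and uses that one $G_{X,n}$ throughout (in particular $G_m=G_{X,n}^{-(m+1)}\circ F\circ G_{X,n}^m$ for all $m$), whereas you allow a varying parameter $n_m$; this extra flexibility is harmless but unnecessary.
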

\begin{proof}
By using Example~\ref{glidertransxmpl} we see that $\left<\{G_{X,n}\mid n>\abs{\one^{p+1+p/K}}\}\cup \{F\}\right>$ is a diffusive glider automorphism group. Fix some $n>\abs{\one^{p+1+p/K}}$. We conclude by replacing every occurrence of $G_X$ by $G_{X,n}$ in the proof of Proposition~\ref{PropRyan}.
\end{proof}

Ryan's theorem immediately follows.

\begin{theorem}[Ryan's theorem]\label{synchRyan}$k(X)\neq\bot$ for every synchronizing subshift $X$.\end{theorem}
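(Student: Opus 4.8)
The plan is to derive the result from Proposition~\ref{PropSynchRyan} after a routine normal-form reduction, treating the finite case separately. If $X$ is a \emph{finite} synchronizing subshift then, being transitive, it is a single periodic orbit $\orb{u^\Z}$; a shift-commuting bijection of a single finite orbit is necessarily a rotation of that orbit, so $\aut(X)=\left<\sigma\right>$ and $k(X)=0\neq\bot$ by Theorem~\ref{soa}. So assume from now on that $X$ is infinite.

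Since $k$ is a conjugacy invariant, it suffices to prove $k(X)\neq\bot$ under the extra assumption that $X$ already has the normal form of Proposition~\ref{01}. To justify passing to this form I first produce a word $\z$ satisfying the hypotheses of Proposition~\ref{0prim}: choose a synchronizing word $v\in L(X)$, use transitivity to find $w$ with $vwv\in L(X)$, observe that since $v$ is synchronizing one gets $(vw)^nv\in L(X)$ for every $n$ (glue $(vw)^nv$ and $vwv$ along the common $v$) and hence $(vw)^\Z\in X$ by closedness of $X$, and let $\z$ be the primitive root of this periodic point. Then $\z^\Z\in X$ has minimal period $\abs{\z}$, and since the synchronizing word $v$ occurs in $\z^\Z$ it occurs in $\z^k$ for all large $k$, so $\z^k$ is synchronizing (a word containing a synchronizing word is synchronizing). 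Applying Propositions~\ref{0prim} and~\ref{01} in succession then replaces $X$ by a conjugate subshift, still infinite and synchronizing, that has the form in the statement of Proposition~\ref{01}, with $k$ unchanged.

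It then remains to invoke Proposition~\ref{PropSynchRyan}, which provides a set $S=\{G_{X,n}\mid n>\abs{\one^{p+1+p/K}}\}\cup\{F\}\subseteq\aut(X)$ with $C(S)=\left<\sigma\right>$; by Definition~\ref{kDef} this gives $k(X)\neq\bot$. Note that $S$ is infinite, so the argument yields only $k(X)\in\N\cup\{\infty\}$, which is all that is asserted. The steps needing genuine attention, rather than being purely formal, are the existence of a suitable periodic point $\z^\Z$ and checking that each conjugacy in the chain of Section~\ref{markerSect} preserves the synchronizing and minimal-period hypotheses at every stage; a minor point is the density of $\z$-finite configurations used inside Proposition~\ref{PropSynchRyan}, which follows from transitivity of $X$ together with $\z$ being synchronizing (any word of $L(X)$ can be extended on both sides to reach $\z$ and then continued periodically by $\z$). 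I do not expect any essential obstacle beyond this bookkeeping.
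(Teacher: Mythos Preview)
Your proposal is correct and follows the same route as the paper, which simply records that the theorem follows immediately from Proposition~\ref{PropSynchRyan}. You supply the details the paper leaves implicit---the conjugacy reduction to the normal form of Proposition~\ref{01}, the construction of a suitable periodic point $\z^\Z$, and the separate treatment of the finite case (which the paper's one-line proof does not address).
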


We end this section with the following remark. Finitary Ryan's theorem can be interpreted as a compactness result saying that, for a nontrivial mixing sofic shift $X$, the group $\aut(X)$ has a finite subset $S$ such that $C(S)=\left<\sigma\right>$. One may wonder whether this compactness phenomenon is more general: in Section~7.3 of~\cite{Salo19} the question was raised whether for a mixing SFT $X$ and for every $R\subseteq\aut(X)$ such that $C(R)=\left<\sigma\right>$ there is a finite subset $S\subseteq R$ such that also $C(S)=\left<\sigma\right>$. In the same section it was noted that to construct a counterexample it would be sufficient to find a locally finite group $\grp{G}\subseteq\aut(X)$ whose centralizer is generated by $\sigma$. A different strategy based on an ad hoc glider CA construction was used in~\cite{Kop19b} to construct a counterexample in the case when $X$ is the binary full shift. We are now in a position to easily generalize this counterexample to all infinite synchronizing subshifts by combining the following proposition with Proposition~\ref{PropSynchRyan}.

\begin{proposition}\label{noCompact}Let an infinite synchronizing subshift $X\subseteq A^\Z$ and $G_{X,n},F:X\to X$ be as above and let $S\subseteq\left<\{G_{X,n}\mid n>\abs{\one^{p+1+p/K}}\}\cup \{F\}\right>$ be finite. Then $C(S)\supsetneq\left<\sigma\right>$.\end{proposition}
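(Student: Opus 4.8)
The plan is to show that the family $\{G_{X,n} \mid n > \abs{\one^{p+1+p/K}}\} \cup \{F\}$, though infinite, is ``effectively finite'' in the following sense: any finite subset $S$ of the group it generates already lies in a subgroup generated by finitely many of the $G_{X,n}$ together with $F$, and such a finitely generated subgroup is too weak to have centralizer equal to $\left<\sigma\right>$. First I would observe that since $S$ is finite, each element of $S$ is a word in finitely many generators; hence there is a single $n_0 > \abs{\one^{p+1+p/K}}$ such that $S \subseteq \left<\{G_{X,n} \mid n_0 \geq n > \abs{\one^{p+1+p/K}}\} \cup \{F\}\right> \doteqdot \grp{H}$. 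So it suffices to prove $C(\grp{H}) \supsetneq \left<\sigma\right>$, i.e. to exhibit a single automorphism $H \in \aut(X)$ commuting with every $G_{X,n}$ for $n \leq n_0$ and with $F$, but with $H \notin \left<\sigma\right>$.

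The key point is that each $G_{X,n} = P_{4,n} \circ P_2 \circ P_1$ and $F$ are all defined via the marker method (Lemma~\ref{markerauto}) using only rewriting rules supported on words of bounded length: the rules for $P_1, P_2, F$ have a fixed bounded diameter, and the rules for $P_{4,n}$ only involve words of length at most roughly $n \leq n_0$. Consequently there is a uniform bound $R$ such that all of $P_1, P_2, F$ and all $P_{4,n}$ with $n \leq n_0$ only read and rewrite inside windows of radius $R$, and moreover all the words they rewrite contain an occurrence of $\z$ as an interior marker. The plan is then to build $H$ as a marker automorphism (again via Lemma~\ref{markerauto}) that acts on configurations by a local rule supported ``far from any $\z$'': concretely, fix a long word $v \in L(X)$ over the alphabet $B = A \setminus \{0_1,\dots,0_p\}$ of length much larger than $2R$ such that $\z v \z \in L(X)$, and let $H$ nontrivially permute two such words $\z v_1 \z, \z v_2 \z \in L(X)$ that are syntactically equivalent (such a pair exists because $X$ is infinite, so $L(X) \cap B^*$ is infinite, and over a synchronizing shift one can find two long $B$-words with $\z \cdot \z$-context that are syntactically equal, e.g. by Lemma~\ref{synborder} after arranging a common synchronizing prefix and suffix inside). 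Since the rewriting windows of all the $G_{X,n}$ ($n \leq n_0$) and of $F$ are too short to reach across the long $B$-block in an occurrence of $\z v_i \z$, and conversely $H$ cannot interact with any of the short $\z$-bordered windows those maps rewrite, the maps $H$ and each $G_{X,n}$, and $H$ and $F$, commute. Finally $H \neq \id$ on the point ${}^\infty \z v_1 \z {}^\infty$ by construction, and $H$ is not a power of $\sigma$ since it fixes $\z^\Z$ while a nontrivial power of $\sigma$ does not (recalling the minimal period of $\z^\Z$ is $\abs{\z}$); thus $H \in C(\grp{H}) \setminus \left<\sigma\right>$, so $C(S) \supseteq C(\grp{H}) \supsetneq \left<\sigma\right>$.

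The main obstacle I anticipate is the careful verification that $H$ can genuinely be chosen to commute with \emph{all} the relevant maps simultaneously: one must check not only that the rewriting windows are disjoint in the obvious sense, but that performing an $H$-substitution on an occurrence of $\z v_i \z$ never creates or destroys an occurrence of any word rewritten by some $P_j$ or $F$, and vice versa. This is where the structural facts from Proposition~\ref{01} are essential: the words $v_i$ lie in $B^+$ and are long, the words rewritten by $P_1, P_2, F, P_{4,n}$ all carry $\z$ as an internal synchronizing marker and are short relative to $\abs{v_i}$, and $\z$ does not occur inside $v_i$. These together guarantee that the two sets of marked patterns are ``non-interfering'' — each occurrence of an $H$-pattern contains a long $\z$-free stretch that no $P_j$-pattern or $F$-pattern can sit inside, and the boundary $\z$'s of the $H$-pattern are untouched by those maps — so the substitutions commute configuration-by-configuration. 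Writing this out requires a short but somewhat fiddly case analysis of how occurrences of the two kinds of marked words can overlap, but it uses no new ideas beyond the overlap bookkeeping already carried out for $P_1, P_2, P_{4,n}$ in Section~\ref{glidertransSect}.
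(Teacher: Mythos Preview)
Your overall strategy matches the paper's: use finiteness of $S$ to get a uniform bound on the ``reach'' of every map in $S$, then build a marker automorphism $H$ supported on patterns too long to interact with anything in $S$. The gap is in your specific choice of $H$.

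You propose to swap $\z v_1\z$ and $\z v_2\z$ for two distinct words $v_1,v_2\in B^+$ of the same large length with $\z v_i\z\in L(X)$. But such a pair need not exist. In the even shift (the running example of Section~\ref{glidertransSect}) one has $B=\{1\}$, so $B^k$ contains exactly one word for each $k$, and your $H$ cannot be built. Nothing in Proposition~\ref{01} forces $\abs{B}>1$ or produces two distinct equal-length $B$-words between $\z$'s; your appeal to Lemma~\ref{synborder} only yields the syntactic equivalence of $\z v_1\z$ and $\z v_2\z$ once you already have them, it does not manufacture a second $v_2\neq v_1$.

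The paper fixes this by allowing $\z$'s inside the marked pattern. It sets $w_i=\z\one^{n+i}\z$ for large $n$ (these always exist by $\z\one^*\z\subseteq L(X)$ in Proposition~\ref{01}, and are distinguishable by length even when $\abs{B}=1$) and lets $H$ swap $\z w_3\z w_1\z w_2\z$ with $\z w_3\z w_2\z w_1\z$. The two patterns have equal total length and equal syntactic class by Lemma~\ref{synborder}, so Lemma~\ref{markerauto} applies. Commutation is then exactly your non-interference idea: for $n$ large enough no element of $S$ can create or destroy an occurrence of any $\z\one^{n+i}\z$, since each of $P_1,P_2,F_1,F_2,P_{4,m}$ (for the finitely many relevant $m$) only rewrites patterns whose maximal $B$-block has bounded length. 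So your plan was right; only the pattern needed to be chosen differently.
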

\begin{proof}Assume to the contrary that $C(S)=\left<\sigma\right>$. Since $S$ is finite, it is easy to see that whenever $n\in\Npos$ is sufficiently large, the elements of $S$ cannot remove or add occurrences of the words $w_i=\z\one^{n+i}\z$ ($i\in\N$) in any configuration. Let therefore $H\in\aut(X)$ be the automorphism which given a point $x\in X$ replaces every occurrence of the pattern
\[0 w_3 0 w_1 0 w_2 0 \qquad \mbox{by} \qquad 0 w_3 0 w_2 0 w_1 0\]
and vice versa (it exists by Lemma~\ref{markerauto} with the choice $u=0$). The elements of $S$ cannot remove or add occurrences of the words defined above, so $H$ commutes with every element of $S$, a contradiction.
\end{proof}

\section{Restrictions to constructing glider automata}

\subsection{Example: the choice of $\z$ in mixing sofic shifts}\label{choiceSubSect}

In Section~\ref{glidertransSect} we constructed glider automata on an arbitrary infinite transitive sofic shift $X$ that can diffuse any $\z$-finite configuration into two glider fleets. In other words, the diffusion is guaranteed against the background of the periodic configuration $\z^\Z$, but in the construction we required that the word $\z$ satisfies the synchronization assumption of Proposition~\ref{01}. One may then ask whether this assumption is necessary. In particular, if we have a subshift $X\in A^\Z$ and a symbol $0\in A$ such that  $0^\Z\in X$, it would feel the most natural to consider finiteness with respect to this $1$-periodic configuration and ask whether there exists a reversible CA that can diffuse every $0$-finite configuration. We show by an example that sometimes this cannot be done.

In this subsection we consider the mixing sofic shift $X\subseteq\{0,1,a,b,\al,\yl\}^\Z$ whose language $L(X)$ consists of all the subwords of words in $L=(L_0 0^* L_1 0^*)^*$, where
\begin{flalign*}
&L_0=1(ab)^*\yl(ab)^*\al(ab)^*1\cup 1(ab)^*\al(ab)^*\yl(ab)^*1 \\
&L_1=1(ab)^*\al(ab)^*\al(ab)^*1\cup 1(ab)^*\yl(ab)^*\yl(ab)^*1.
\end{flalign*}
The intuition is that words $w_0\in L_0$ encode the digit zero (opposing arrows in $w_0$ negate each other), words $w_1\in L_1$ encode the digit one (arrows in the same direction in $w_1$ amplify each other) and in configurations of $X$ consecutive encodings of the same digit cannot occur. 

First let us note that $F(0^\Z)=0^\Z$ and $F(\orb{(ab)^\Z})=\orb{(ab)^\Z}$ for every $F\in\aut(X)$, because $0^\Z$ (resp. $(ab)^\Z$) are the only configurations (up to shift) of least period $1$ (resp. $2$) in $X$. Throughout this subsection let $e_\ell={}^\infty 0.1(ab)^\infty$ and $e_\arr={}^\infty(ab)1.0^\infty$.

\begin{lemma}If $F\in\aut(X)$, then $F(e_\ell)=\sigma^i(e_\ell)$ and $F(e_\arr)=\sigma^j(e_\arr)$ for some $i,j\in\Z$.\end{lemma}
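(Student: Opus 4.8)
The plan is to establish the assertion for $e_\ell$; the statement for $e_\arr$ then follows verbatim after interchanging the roles of the two tails, i.e.\ after replacing ``left‑asymptotic to $0^\Z$ and right‑asymptotic to $\orb{(ab)^\Z}$'' by ``left‑asymptotic to $\orb{(ab)^\Z}$ and right‑asymptotic to $0^\Z$''. So fix $F\in\aut(X)$.

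First I would locate $F(e_\ell)$ asymptotically. Write $F$ with memory $m$ and anticipation $a$, so $F(x)[i]$ depends only on $x[i+m],\dots,x[i+a]$. Since $e_\ell[-\infty,-1]={}^\infty 0$ and $F(0^\Z)=0^\Z$, every cell of $F(e_\ell)$ that is far enough to the left reads only $0$'s, hence $F(e_\ell)[-\infty,i_0]={}^\infty 0$ for some $i_0$. Since $e_\ell[1,\infty]=(ab)^\infty$, since $F(\orb{(ab)^\Z})=\orb{(ab)^\Z}$, and since $F$ commutes with $\sigma$, the map $F$ sends $(ab)^\Z$ to $(ab)^\Z$ or to $(ba)^\Z$; hence every cell of $F(e_\ell)$ that is far enough to the right agrees with the corresponding cell of that image, so $F(e_\ell)[j_0,\infty]\in\{(ab)^\infty,(ba)^\infty\}$ for some $j_0$. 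Thus $F(e_\ell)$ is left‑asymptotic to $0^\Z$ and right‑asymptotic to $\orb{(ab)^\Z}$, and the same holds for $F^{-1}(e_\ell)$.

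Next I would prove the rigidity statement that $e_\ell$ is, up to a shift, the \emph{only} configuration of $X$ which is left‑asymptotic to $0^\Z$, right‑asymptotic to $\orb{(ab)^\Z}$, and contains no occurrence of $\yl$ or $\al$. Indeed, let $x$ be such a configuration and let $i$ be maximal with $x[-\infty,i-1]={}^\infty 0$. Reading off from $L=(L_0 0^* L_1 0^*)^*$: in any element of $X$ the symbol immediately after a maximal block of $0$'s is the initial $1$ of an $L_0$‑ or $L_1$‑word, and inside such a word the symbol following the initial $1$ is either $a$ (the first letter of an $(ab)^*$‑block) or one of $\yl,\al$; moreover the first $(ab)^*$‑block of that word is followed by one of $\yl,\al$, never by the terminal $1$. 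Since $x$ is arrow‑free, $x[i]=1$, $x[i+1]=a$, and the first $(ab)^*$‑block must be infinite, so $x[i+1,\infty]=(ab)^\infty$; thus $x=\sigma^{-i}(e_\ell)\in\orb{e_\ell}$. The mirror statement identifies $\orb{e_\arr}$.

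By the previous two steps it now suffices to show that $F(e_\ell)$ carries no arrow, and \emph{this is the main obstacle}. The route I would take: a configuration in the asymptotic class of $F(e_\ell)$ that does contain an arrow must — because arrows occur only strictly between the two $1$'s of an $L_i$‑word, and by Step~1 all of them sit in the finite transition window — contain either a complete $L_i$‑word or the half‑open $L_i$‑word that opens its $(ab)$‑tail together with one of that word's arrows; in either case such a configuration encodes a non‑empty finite digit string in the sense of the paragraph preceding the lemma, whereas $e_\ell$ encodes the empty string. One then approximates: the $0$‑finite configurations $x_n:={}^\infty 0.1(ab)^n\yl\al 1\,0^\infty\in X$ each encode the single digit $0$ and converge to $e_\ell$ (they agree with $e_\ell$ on $[-N,2n]$ for every $N$), so $F(e_\ell)=\lim_n F(x_n)$ with every $F(x_n)$ $0$‑finite and $F^{-1}(F(e_\ell))=e_\ell$. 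The crux is to show that an automorphism of $X$ can only relocate the $L_i$‑words that carry the digit encoding, not create or annihilate them — for which I would exploit the forced alternation $\dots,L_0,L_1,L_0,L_1,\dots$ of $L_i$‑blocks, which makes inserting or deleting such a word break a parity of the block sequence and hence change a context (syntactic) class, so that no local invertible rule can do it; equivalently, $F^{-1}$ applied to the hypothetical digit‑carrying $F(e_\ell)$ cannot reproduce the digit‑free $e_\ell$. Once this conservation is made precise, $F(e_\ell)$ is arrow‑free, so $F(e_\ell)\in\orb{e_\ell}$ by the rigidity step, and the symmetric argument gives $F(e_\arr)\in\orb{e_\arr}$.
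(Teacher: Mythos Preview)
Your reduction is sound: the asymptotic localisation and the rigidity characterisation of $\orb{e_\ell}$ as the unique arrow-free configuration in that asymptotic class are both correct, and they do reduce the lemma to showing that $F(e_\ell)$ contains no arrow. The gap is in that last step. Your proposed mechanism---a conservation law for the digit string encoded by the $L_i$-words, justified by the forced $L_0/L_1$ alternation---is not carried out, and in the form stated it does not cover the main case. A configuration such as ${}^\infty 0\,1(ab)^k\yl(ab)^\infty$ has exactly one arrow inside a half-open $L_i$-word and therefore encodes \emph{no} complete digit; your parity/alternation heuristic says nothing about why $F$ could not send $e_\ell$ to such a point. The sentence ``equivalently, $F^{-1}$ applied to the hypothetical digit-carrying $F(e_\ell)$ cannot reproduce the digit-free $e_\ell$'' simply restates the goal.

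The paper avoids any general conservation law and argues by a direct gluing contradiction. If $F(e_\ell)$ contained a complete $L_0$-word (the $L_1$ case is identical), pick $x={}^\infty 0\,1\yl\al 1\,0^{3r+1}.0^\infty$, which carries a single $L_0$-word; then $F^{-1}(x)\glue e_\ell\in X$ because the suffix $1(ab)^\infty$ imposes no constraint to its left, but $F(F^{-1}(x)\glue e_\ell)$ agrees with $x$ on the left and with $F(e_\ell)$ on the right, producing two consecutive $L_0$-words, which is forbidden. This handles the two-arrow case. For the single-arrow case the paper compares $e_\ell$ with $e_\ell'={}^\infty 0.1(ab)^{2r+1}\yl(ab)^\infty$: injectivity forces $F(e_\ell)\neq F(e_\ell')$, and since $F(e_\ell')$ already agrees with $F(e_\ell)$ outside a bounded window it must contain at least two arrows, so the previous gluing argument applied to $e_\ell'$ gives the contradiction. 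This concrete two-step gluing is what your sketch is missing.
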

\begin{proof}Let $F$ be a radius-$r$ reversible CA whose inverse also has radius $r$. We may assume without loss of generality (by composing $F$ with a suitable shift if necessary) that the rightmost occurrence of $1$ in $F(e_\ell)$ is at position $0$. We first claim that $F(e_\ell)$ does not contain any occurrence of words from $L_0\cup L_1$ (equivalently: $F(e_\ell)[-\infty,-1]={}^\infty0$). Otherwise assume without loss of generality that the leftmost such occurrence is from $L_0$. Let $x={}^\infty 01\yl\al 10^{3r+1}.0^\infty$ (i.e. $x$ contains an occurrence of a word from $L_0$). Its inverse image $F^{-1}(x)$ belongs to $X$ and thus also the gluing $F^{-1}(x)\glue e_\ell$ belongs to $X$ because the right infinite word $1(ab)^\infty$ in $e_\ell$ does not give additional constraints for the left side of the sequence. But then the configuration $F(F^{-1}(x)\glue e_\ell)$ contains two consecutive occurrences of words from $L_0$, contradicting the definition of $X$.

Now to prove that $F(e_\ell)=e_\ell$ it remains to show that $F(e_\ell)$ cannot contain any arrows, so we assume to the contrary that $F(e_\ell)$ contains one or two arrows. The possibility that $F(e_\ell)$ contains two arrows yields a contradiction by the same argument as in the previous paragraph (e.g. if $F(e_\ell)$ contains two opposing arrows, then glue $F^{-1}(x)\glue e_\ell$, in which case $F(F^{-1}(x)\glue e_\ell)$ contains two consecutive encodings of the digit $0$), so let us assume that $F(e_\ell)$ contains a single arrow (whose distance from the single $1$ in $F(e_\ell)$ is at most $r$). Let $e_\ell'={}^\infty 0.1(ab)^{2r+1}\yl(ab)^\infty$. Since $F$ is reversible, it follows that $F(e_\ell)\neq F(e_\ell')$ and in particular $F(e_\ell')$ contains two arrows. Now we can use the same argument as above to show that this is not possible, so we conclude that $F(e_\ell)=e_\ell$.

By symmetry $F(e_\arr)=\sigma^j(e_\arr)$ for some $j\in\Z$.
\end{proof}

For now, if $F,i,j$ are as in the previous lemma, we say that the \emph{intrinsic left (resp. right) shift} of $F$ is equal to $i$ (resp. equal to $j$). In the following let $x_\yl={}^\infty(ab).\yl(ab)^\infty$ and $x_\al={}^\infty(ab).\al(ab)^\infty$.

\begin{lemma}
If $F\in\aut(X)$ has intrinsic left shift $i$ (resp. intrinsic right shift $i$), then $F(x_\yl)\in\sigma^i(\{x_\yl,x_\al\})$ and $F(x_\al)\in\sigma^i(\{x_\al,x_\yl\})$. In particular the intrinsic right and left shift are equal.
\end{lemma}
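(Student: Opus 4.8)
The goal is to show that an automorphism $F\in\aut(X)$ with intrinsic left shift $i$ satisfies $F(x_\yl),F(x_\al)\in\sigma^i(\{x_\yl,x_\al\})$, and symmetrically for the right shift, whence the two intrinsic shifts coincide. The idea is to exploit the same ``gluing yields forbidden consecutive encodings'' mechanism used in the proof of the previous lemma, but now applied to a configuration that simultaneously contains the tail of $e_\ell$ on the left and the tail of $e_\arr$ on the right, with $x_\yl$ or $x_\al$ sitting in the middle.

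First I would reduce to the intrinsic left shift. By composing $F$ with a shift we may assume $i=0$, so that $F(e_\ell)=e_\ell$ and $F(e_\arr)=\sigma^j(e_\arr)$ for the intrinsic right shift $j$; our task becomes to show $F(x_\yl),F(x_\al)\in\{x_\yl,x_\al\}$ and $j=0$. Let $F$ and $F^{-1}$ both have radius $r$. Consider the configuration $z={}^\infty 0\,1(ab)^{M}.\yl(ab)^{M}1\,0^\infty$ for large $M$; this lies in $X$ since the single arrow encodes... well, it is a subword of something in $L$ only if there are \emph{two} arrows, so instead I would take $z_\yl={}^\infty 0\,1(ab)^M\yl(ab)^M\al(ab)^M1\,0^\infty\in L_0$-encoded form — no: the cleanest choice is $z_\yl={}^\infty 0\,1(ab)^M.\yl(ab)^M\,1\,0^{3r+1}\,1(ab)^M\al(ab)^M1\,0^\infty$, which belongs to $X$ (it is a concatenation of encodings of $1$ then $0$). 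The point is that $z_\yl$ agrees with $x_\yl$ on a huge window around the origin. Since $F$ has radius $r$, $F(z_\yl)$ and $F(x_\yl)$ agree on a window of size $2M-2r$ around the origin; and since $F^{-1}(e_\ell)=e_\ell$ (so $F(e_\ell)[-\infty,-1]={}^\infty 0$ forces the left halves of images of left-like configurations to have a specific form), a gluing argument pins down the left half of $F(x_\yl)$ to be $\cdots(ab)(ab)$ and the right half similarly.

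The key step is the following dichotomy, proved by contradiction exactly as in the previous lemma: I claim that near the origin $F(x_\yl)$ contains exactly one arrow (up to the shift $i$, here $i=0$), and that this arrow is $\yl$ or $\al$. Suppose $F(x_\yl)$ contained two arrows (or zero) in its central region. Using $F^{-1}$ applied to a configuration of $X$ built from $e_\arr$-tail on one side and a suitable encoding on the other, and gluing with the $e_\ell$-tail, one produces a preimage in $X$ whose $F$-image has two consecutive blocks each containing a net of zero arrows — i.e. two consecutive encodings of the digit $0$ in violation of the definition of $L$ — or, in the zero-arrows case, two consecutive encodings of $1$, again forbidden. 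This uses that $F$ and $F^{-1}$ both have radius $r$ and that $M$ was chosen $\gg r$ so the gluing does not interfere with the arrow count. The reversibility of $F$ then also forbids $F(x_\yl)=F(x_\al)$, which together with the count ``exactly one central arrow'' forces $\{F(x_\yl),F(x_\al)\}=\{x_\yl,x_\al\}$ (as sets, after the shift $i$).

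Finally, to see $j=i$: apply the analogous argument from the right. Because $x_\yl$ is fixed (up to $\sigma^i$) whether we approach it from the $e_\ell$ side or the $e_\arr$ side, the shift needed to normalize $F(x_\yl)$ must be the same, namely $i=j$. The main obstacle I anticipate is bookkeeping the radii and the large parameter $M$ carefully enough that the gluing in the ``two arrows'' contradiction genuinely produces two \emph{adjacent} encoding blocks with no intervening arrows — i.e. making precise that the glued configuration's $F$-image really does contain a subword of the form (encoding of $0$)(encoding of $0$) or (encoding of $1$)(encoding of $1$), which is what contradicts $L=(L_00^*L_10^*)^*$. Everything else is a routine adaptation of the preceding lemma's proof.
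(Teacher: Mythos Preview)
Your gluing strategy is the right one for the first half of the argument --- showing that $F(x_\yl)\in\orb{x_\yl}\cup\orb{x_\al}$ --- and this matches what the paper does (it works with the one-sided point $y={}^\infty 0.1(ab)^{2r+1}\yl(ab)^\infty$ rather than a two-sided finite block, but the mechanism is the same: if $F(x_\yl)$ had two arrows or a $1$, a gluing with a preimage of a suitable $L_0$- or $L_1$-block would produce two consecutive encodings of the same digit).

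There is, however, a genuine gap in the second half. Once you know $F(x_\yl)=\sigma^k(x_\square)$ for some $k$ with $\abs{k}\leq r$ and some $\square\in\{\yl,\al\}$, you still have to prove $k=i$ (i.e.\ $k=0$ after your normalization). Your gluing arguments do not do this: comparing with $e_\ell$ on the left pins down the position of the $1$ in $F(y)$, and comparing with $x_\yl$ in the middle pins down the position of the arrow in $F(y)$ \emph{as a function of $k$}, but nothing forces $k=0$. The ``approach from both sides'' paragraph is circular: the right-side comparison with $e_\arr$ pins down the position of the right $1$ as a function of $j$, and the arrow position is still the same $k$; you end up with a valid element of $L_0\cup L_1$ for any even $k$ and $j$, so no contradiction arises. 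In particular you never establish that the left-side gluing forces $k=i$ or that the right-side gluing forces $k=j$.

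The paper closes this gap with a genuinely different idea that is absent from your sketch. Assuming (after normalization and harmless reductions) that $F(x_\yl)=\sigma^k(x_\yl)$ with $0<k\leq r$, one iterates $F$ on $x={}^\infty 0.1(ab)^{2r+1}\yl(ab)^\infty$. The gluing arguments guarantee that every $F^t(x)$ still has exactly one $1$ (at position $0$) and exactly one arrow, so $\{F^t(x):t\geq 0\}$ is a finite set; reversibility then forces the orbit to be periodic. But the very first application of $F$ strictly decreases the distance between the $1$ and the arrow (the arrow is far enough from the $1$ to see only the $(ab)$-pattern, so it shifts by $k>0$), so $F^t(x)\notin\orb{x}$ for $t>0$, contradicting periodicity (the paper phrases the contradiction by exhibiting two distinct preimages under some $\sigma^m\circ F^{t'}$). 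This dynamical pigeonhole step is what actually nails down $k=0$, and with it the equality of the two intrinsic shifts.
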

\begin{proof}
Let $F$ be a radius-$r$ reversible CA whose inverse also has radius $r$ and assume without loss of generality (by composing $F$ with a suitable shift if necessary) that the intrinsic left shift is $i=0$, the case of the intrinsic right shift $i=0$ being symmetric. We prove the claim for $F(x_\yl)$, the other case being symmetric. We first claim that $F(x_\yl)\in\orb{x_\yl}\cup \orb{x_\al}$. Otherwise $F(x_\yl)$ contains at least two occurrences of arrows or at least one occurrence of $1$. Denoting $y={}^\infty0.1(ab)^{2r+1}\yl(ab)^\infty$, in both cases $F(y)[-\infty,2r+1]={}^\infty0.1(ab)^r$ by the previous lemma, and going further to the right in $F(y)$ there must be two occurrences of arrows after which there may be an occurrence of $1$. We will derive a contradiction in the case that these arrows point in opposing directions, after which it will be clear that a similar argument yields a contradiction the case that the arrows point in the same direction. Let $x={}^\infty 01\yl\al 10^{3r+1}.0^\infty$ (i.e. $x$ contains an occurrence of a word from $L_0$). The gluing $F^{-1}(x)\glue y$ belongs to $X$ because the right infinite word $1(ab)^{2r+1}\yl(ab)^\infty$ in $y$ does not give additional constraints for the left side of the sequence. But then the configuration $F(F^{-1}(x)\glue y)=x\glue F(y)$ contradicts the definition of $X$.

Now we prove that $F(x_\yl)\in\{x_\yl,x_\al\}$. Otherwise it holds that $F(x_\yl)\in\{\sigma^k(x_\yl),\sigma^k(x_\al)\}$ for $k\neq 0$ and we may assume without loss of generality that $0<k\leq r$ (by considering instead the CA $F^{-1}$ if necessary) and that $F(x_\yl)=\sigma^k(x_\yl)$ (by composing $F$ with the CA that only flips the direction of every arrow if necessary). Consider the point $x={}^\infty 0.1(ab)^{2r+1}\yl(ab)^\infty$. None of the configurations $F^t(x)$ $(t\in\N)$ contain an occurrence of a word from $L_0\cup L_1$ by the same argument as in the previous paragraph and as in the proof of the previous lemma. Similarly none of the $F^t(x)$ contain two arrows and the unique arrow in $F^t(x)$ points to the direction $\yl$. Since $F(x_\yl)=\sigma^k(x_\yl)$, it follows that for $t>0$ the distance between $1$ and $\yl$ in $F^t(x)$ is strictly smaller than in $x$ and in particular $F^t(x)\notin\orb{x}$. However, from $F(x_\yl)=\sigma^k(x_\yl)$ it also follows that in each $F^t(x)$ the distance between $1$ and $\yl$ is bounded, so $F^{t'}(x)=\sigma^m(F^{2t'}(x))$ for some $t'\in\Npos$, $m\in\Z$. Therefore $\sigma^m(F^{2t'}(x))$ has two distinct preimages under the map $\sigma^m\circ F^{t'}$ (they are $F^{t'}(x)$ and $\sigma^{-m}(x)$, for distinctness recall that $F^t(x)\notin\orb{x}$ for $t\in\Npos$), which contradicts the reversibility of $F$. 
\end{proof}

In the following we say that the \emph{intrinsic shift} of $F\in\aut(X)$ is equal to $i$ if $i$ is its intrinsic left (or equivalently right) shift. Next we will conclude that for any $F\in\aut(X)$ there are $0$-finite configurations with long contiguous segments of non-$0$ symbols on which $F$ cannot do anything nontrivial. In fact, this holds for every finitely generated subgroup of $\aut(X)$.

\begin{proposition}
For all $n\in\N$ let $x_n={}^\infty0.1(ab)^n\yl (ab)^n\yl (ab)^n 10^\infty$, $y_n={}^\infty0.1(ab)^n\al (ab)^n\al (ab)^n 10^\infty$ and $Z_n=\orb{\{x_n\}}\cup\orb{\{y_n\}}$. For every finitely generated $\grp{G}$ there is $N\in\N$ such that $F(Z_n)=Z_n$ for all $F\in\grp{G}$ and $n\geq N$.
\end{proposition}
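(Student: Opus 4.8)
The plan is to reduce to the action of a single automorphism and then pin that action down locally, using the two lemmas immediately preceding the statement. Fix a finite symmetric generating set $S=\{F_1^{\pm1},\dots,F_k^{\pm1}\}$ of $\grp{G}$ and let $r$ be a common radius for all members of $S$. Since $Z_n=\orb{x_n}\cup\orb{y_n}$ is a union of shift-orbits it is $\sigma$-invariant, and since every $F\in\aut(X)$ commutes with $\sigma$ we have $F(\orb{x_n})=\orb{F(x_n)}$; hence $F(Z_n)\subseteq Z_n$ as soon as $F(x_n)\in Z_n$ and $F(y_n)\in Z_n$. If this holds for a given $g\in S$ it also holds for $g^{-1}\in S$, so $Z_n=g(g^{-1}(Z_n))\subseteq g(Z_n)\subseteq Z_n$ forces $g(Z_n)=Z_n$; writing an arbitrary element of $\grp{G}$ as a product of members of $S$ then gives $F(Z_n)=Z_n$ for all $F\in\grp{G}$. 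So it is enough to find $N$ — and we will see $N=r$ works — such that every automorphism $F$ of radius $r$ satisfies $F(x_n)\in Z_n$ and $F(y_n)\in Z_n$ for all $n\geq N$.

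Then I would analyze a single such $F$. Let $i$ be its intrinsic shift; since $F$ fixes $0^\Z$ one checks $|i|\leq r$, and by the lemma on arrows exactly one of $F(x_\yl)=\sigma^i(x_\yl)$ (Case (i)) and $F(x_\yl)=\sigma^i(x_\al)$ (Case (ii)) holds. Normalize coordinates so the first $1$ of $x_n$ sits at position $0$; then its two $\yl$'s sit at positions $2n+1$ and $4n+2$ and its last $1$ at $6n+3$. A short check on where the letters $a,b$ fall shows that $x_n$ agrees with a fixed reference configuration on a long interval around each distinguished place: with $e_\ell$ on $(-\infty,2n]$, with $\sigma^{-(2n+1)}(x_\yl)$ on $[1,4n+1]$, with $\sigma^{-(4n+2)}(x_\yl)$ on $[2n+2,6n+2]$, and with $\sigma^{-(6n+4)}(e_\arr)$ on $[4n+3,\infty)$. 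Applying the radius-$r$ map $F$, using that it commutes with $\sigma$ and that $F(e_\ell)=\sigma^i(e_\ell)$, $F(e_\arr)=\sigma^i(e_\arr)$, we get that $F(x_n)$ agrees with $\sigma^i(e_\ell)$ on $(-\infty,2n-r]$, with $\sigma^{-(2n+1)}(F(x_\yl))$ on $[1+r,4n+1-r]$, with $\sigma^{-(4n+2)}(F(x_\yl))$ on $[2n+2+r,6n+2-r]$, and with $\sigma^{i-(6n+4)}(e_\arr)$ on $[4n+3+r,\infty)$. For $n\geq r$ (using $|i|\leq r$) these four intervals overlap consecutively and cover $\Z$, and on each overlap the two descriptions are forced to agree (they impose the same phase of the $(ab)$-pattern), so together they pin $F(x_n)$ down entirely: it is $0$-finite, contains exactly two $1$'s separated by three blocks $(ab)^n$ carrying two interior arrows, and — the key point — those two interior arrows are both equal to the unique arrow of $F(x_\yl)$, since both interior intervals above refer to a shift of the single configuration $F(x_\yl)$. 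Hence in Case (i) $F(x_n)$ is a shift of $x_n$ and in Case (ii) a shift of $y_n$; either way $F(x_n)\in Z_n$. The identical argument with $\al$, $F(x_\al)$ in place of $\yl$, $F(x_\yl)$ gives $F(y_n)\in Z_n$, completing the reduction.

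The main obstacle is precisely the bookkeeping in the previous paragraph: checking that $x_n$ really does coincide with the four reference configurations on the claimed intervals (an elementary parity computation), that after the radius-$r$ shrinkage the intervals still cover $\Z$ and overlap consecutively once $n\geq r$, and that the four local descriptions are mutually consistent on their overlaps, so that $F(x_n)$ is globally determined. No new idea is needed: the conceptual content — that $F$ acts on the left end of $x_n$ as it does on $e_\ell$, on the right end as on $e_\arr$, and identically on both interior arrows because its effect on $x_\yl$ (and on $x_\al$) is a single fixed word — is exactly what the two lemmas preceding the proposition provide.
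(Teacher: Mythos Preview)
Your argument is correct and follows essentially the same route as the paper: reduce to a finite generating set, invoke the two preceding lemmas to control the action near each ``special'' position of $x_n$, and glue the local pictures. The only noteworthy difference is that the paper first replaces each generator $F_i$ by $\sigma^{-i}\circ F_i$ (where $i$ is the intrinsic shift) so that all generators become shiftless; this lets the paper state the sharper conclusion $F_i(\{x_n,y_n\})=\{x_n,y_n\}$ and dispense with the bookkeeping of tracking $i$ through the overlapping intervals, at the cost of a slightly looser threshold $N=2r+1$ instead of your $N=r$.
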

\begin{proof}
Let $\{F_1,\dots,F_k\}\subseteq\aut(X)$ be a finite set that generates $\grp{G}$. Since the statement of the proposition concerns the shift-invariant sets $Z_n$, we may assume without loss of generality (by composing all the $F_i$ by suitable powers of the shift if necessary) that the intrinsic shift of every $F_i$ is equal to $0$. Fix a number $r\in\N$ such that all the $F_i$ are radius-$r$ CA whose inverses are also radius-$r$ CA. To prove the claim it is sufficient to show that $F_i(\{x_n,y_n\})=\{x_n,y_n\}$ for every $n\geq 2r+1$ and for every $1\leq i\leq k$. But this conclusion directly follows from the two previous lemmas.
\end{proof}

\subsection{Case study: $S$-gap shifts}\label{SpecSubSect}

One may ask how much it is possible to extend Theorem~\ref{sensitiveCA} to more general synchronizing subshifts. In this subsection we study a natural class of synchronizing subshifts known as $S$-gap shifts and we find out that at least in this class Theorem~\ref{sensitiveCA} cannot be generalized at all. A similar analysis on beta-shifts is presented in~\cite{Kop20beta}.

\begin{definition}
A subshift $X\subseteq A^\Z$ is a \emph{coded subshift} (generated by a language $L\subseteq A^+$) if $L(X)$ is the set of all subwords of elements of $L^*$.
\end{definition}

For nonempty $S\subseteq\N$, we define the $S$-gap shift $X_S\subseteq\digs_2^\Z$ as the coded subshift generated by $\{01^n\mid n\in S\}$. We may equate $S$ with its characteristic sequence and we write $S(i)=1$ if $i\in S$ and $S(i)=0$ if $i\notin S$ (for $i\in\N$).

Every $X_S$ is synchronizing, because $0$ is a synchronizing word. By Theorem~3.4 of~\cite{DJ12} an $S$-gap shift is sofic if and only if $S$ is eventually periodic. In particular $S$ is infinite and $1^\Z\in X_S$ whenever $X_S$ is not sofic.

Many $X_S$ satisfy an even stronger property.

\begin{definition}
We say that a subshift $X$ is a \emph{shift with specification} (with transition length $n\in\N$) if for every $u,v\in L(X)$ there is a $w\in L^n(X)$ such that $uwv\in L(X)$.
\end{definition}
All shifts with specification are synchronizing~\cite{Ber86}. 

By Example~3.4 of~\cite{Jung11} the subshift $X_S$ has the specification property if and only if the sequence $S\in\digs_2^\N$ does not contain arbitrarily long runs of zeroes between two ones and $\gcd\{n+1\mid n\in S\}=1$.

\begin{lemma}If $X_S$ is not sofic, then any $F\in\aut(X_S)$ has $1^\Z$ as a fixed point.\end{lemma}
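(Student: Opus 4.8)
The key fact we exploit is that when $X_S$ is not sofic, the set $S$ is not eventually periodic, hence it contains arbitrarily long runs of zeroes: there is no bound on the length of a maximal block of consecutive integers outside $S$. Equivalently, for every $m\in\N$ there is some $n\in S$ with $n+1,n+2,\dots,n+m\notin S$, so that the words $01^n0$ and $01^{n+m}0$ are \emph{not} both legal while long runs of $1$s are unconstrained from both sides. The plan is to show that $1^\Z$ is the unique configuration of least period $1$ in $X_S$, and that moreover it is the unique configuration which looks like $1^\Z$ arbitrarily far to the left \emph{and} arbitrarily far to the right only within a bounded window; any automorphism must then fix it.

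First I would record the elementary structural facts about $X_S$. Since $X_S$ is not sofic, $S$ is infinite and $1^\Z\in X_S$ (this is already noted in the excerpt). The only $1$-periodic configurations in $\digs_2^\Z$ are $0^\Z$ and $1^\Z$, and $0^\Z\notin X_S$ because $S\neq\emptyset$ forces every $0$ in a configuration of $X_S$ to be followed by at least one $1$. Hence $1^\Z$ is the unique fixed point of $\sigma$ in $X_S$. Since every $F\in\aut(X_S)$ commutes with $\sigma$, it maps $\sigma$-fixed points to $\sigma$-fixed points, so $F(1^\Z)$ is again a $1$-periodic point of $X_S$, i.e. $F(1^\Z)=1^\Z$. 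This is really the whole argument, and the only subtlety is making sure $0^\Z\notin X_S$: indeed $0^\Z$ is a bi-infinite path using only the "gap" symbol and never a block $1^n$, so it is not a subword-limit of elements of $\{01^n\mid n\in S\}^*$, because in $L^*$ every $0$ is immediately followed by a $1$.

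I expect essentially no obstacle here; the statement is soft and follows from uniqueness of the fixed point. The one place to be slightly careful is the phrase "$X_S$ is not sofic": we only need from it that $S\neq\emptyset$ (in fact $S$ infinite), which guarantees $1^\Z\in X_S$ and $0^\Z\notin X_S$; the non-sofic hypothesis is used elsewhere in the section (where runs of zeroes in $S$ matter) but for this particular lemma the weaker fact $\emptyset\neq S$ together with $1^\Z\in X_S$ already suffices. I would phrase the proof to make the reader see that the real content is: $1^\Z$ is the only $\sigma$-fixed point of $X_S$, and automorphisms commute with $\sigma$.
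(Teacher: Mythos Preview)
Your argument has a genuine gap: the claim that $0^\Z\notin X_S$ is false in general. If $0\in S$ then the code word $01^0=0$ lies in $L=\{01^n\mid n\in S\}$, so $0^k\in L^*$ for every $k$ and hence $0^\Z\in X_S$. In that case $X_S$ has \emph{two} $\sigma$-fixed points, and commuting with $\sigma$ only tells you that $F$ permutes $\{0^\Z,1^\Z\}$; it does not by itself rule out the swap $F(1^\Z)=0^\Z$, $F(0^\Z)=1^\Z$. Your justification ``in $L^*$ every $0$ is immediately followed by a $1$'' is exactly the statement $0\notin S$, not a consequence of $S\neq\emptyset$.

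The paper's proof splits into the two cases. When $0\notin S$ your argument is fine and is what the paper does. When $0\in S$ the paper assumes for contradiction that $F$ swaps $0^\Z$ and $1^\Z$, applies $F$ to the points $x_n={}^\infty 1\,0^n\,1^\infty\in X_S$, and observes that $F(x_n)$ must contain $01^n0$ for all large $n$ (since near $\pm\infty$ the configuration looks like $1^\Z$ and is sent to $0^\Z$, while the central $0^n$ block is sent to a word containing $1^n$). This forces $n\in S$ for all large $n$, so $\N\setminus S$ is finite and $S$ is eventually periodic, contradicting non-soficness. So here the non-sofic hypothesis is genuinely used, contrary to your final remark that only $S\neq\emptyset$ is needed.
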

\begin{proof}If $0\notin S$ then $1^\Z$ is the only fixed point of $X_S$ and we are done. Let therefore $0\in S$, assume to the contrary that $F(0^\Z)=1^\Z$ and $F(1^\Z)=0^\Z$ and consider the sequence of points $x_n={}^\infty 10^n1^\infty\in X_S$ ($n\in\N)$. Clearly the configurations $F(x_n)$ contain as subwords the words $01^n0$ for all sufficiently large $n$. But then $\N\setminus S$ would have to be finite, contradicting the assumption that $S$ is not eventually periodic.\end{proof}

For the rest of this section let $X_{S,\ell}=\{x\in X_S\mid x[0,\infty]=01^\infty\}$ and $X_{S,\arr}=\{x\in X_S\mid x[-\infty,0]={}^\infty10\}$. These sets are non-empty whenever $S$ is infinite.

\begin{lemma}Assume that $X_S$ is not sofic. For every $F\in\aut(X_S)$ there exists $i\in\Z$ such that $F(X_{S,\ell})\subseteq\sigma^i(X_{S,\ell})$ and $F(X_{S,\arr})\subseteq \sigma^i(X_{S,\arr})$.\end{lemma}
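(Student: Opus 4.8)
The statement to prove is: if $X_S$ is not sofic, then for every $F\in\aut(X_S)$ there is $i\in\Z$ with $F(X_{S,\ell})\subseteq\sigma^i(X_{S,\ell})$ and $F(X_{S,\arr})\subseteq\sigma^i(X_{S,\arr})$. Recall $X_{S,\ell}=\{x\mid x[0,\infty]=01^\infty\}$ consists of configurations that, from position $0$ onward, look like $01^\infty$; equivalently these are the configurations whose ``right tail'' has only finitely many $0$'s, with the last $0$ placed at the origin. Since $F$ commutes with $\sigma$ and is a homeomorphism, $F$ maps the set of configurations ending in $1^\infty$ (i.e.\ those $x$ with $x[k,\infty]=1^\infty$ for some $k$) into itself and bijectively: this uses the previous lemma that $1^\Z$ is a fixed point of $F$ (so $F$ does not destroy the tail $1^\infty$) together with a compactness/uniform-continuity argument. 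The point is then to pin down \emph{where} the last $0$ lands.

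First I would fix $F$ to be a radius-$r$ reversible CA whose inverse is also radius-$r$. Take any $x\in X_{S,\ell}$, so $x[-\infty,-1]$ is arbitrary (subject to lying in $L(X_S)$) and $x[0,\infty]=01^\infty$. Because $F$ has radius $r$ and $F(1^\Z)=1^\Z$, the image $F(x)$ satisfies $F(x)[r,\infty]=1^\infty$, so $F(x)$ has only finitely many $0$'s to the right; let $j(x)$ be the position of its last $0$ (if $F(x)$ has no $0$ at all, i.e.\ $F(x)=1^\Z$, then $x=1^\Z$, handled separately). Thus $F(x)\in\sigma^{j(x)}(X_{S,\ell})$ — but only if $j(x)$ is actually \emph{achieved by a $0$ whose predecessor configuration is $1^\infty$ from that point}, i.e.\ $F(x)[j(x)]=0$ and $F(x)[j(x)+1,\infty]=1^\infty$, which holds by the choice of $j(x)$ as the \emph{last} $0$. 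So $F(X_{S,\ell})\subseteq\bigcup_{j}\sigma^j(X_{S,\ell})$ automatically; the content of the lemma is that a \emph{single} $j=i$ works, and that the \emph{same} $i$ works for $X_{S,\arr}$.

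The key idea for pinning down a single $i$ is a ``rigid shift'' argument using the non-soficity of $S$, exactly in the spirit of the two preceding lemmas and of the even-shift-style arguments earlier in the paper. Here is the mechanism. Since $S$ is not eventually periodic, it contains arbitrarily long runs of $0$'s; equivalently $X_S$ contains configurations with arbitrarily long blocks $1^n$ separated by single $0$'s that force no local constraint. Suppose, for contradiction, that $F$ does not act as a pure shift on $X_{S,\ell}$: then there exist $x,x'\in X_{S,\ell}$ with $j(x)\neq j(x')$. One glues a configuration built from $F^{-1}(x)$ on the left to something exhibiting a long $0$-run on the right (legitimate because $01^\infty$ to the right imposes no constraint on the left) so that the image under $F$ displays two ``last $0$'' positions that disagree — producing a configuration not in $X_S$, since in $X_S$ the position of the final $0$ before the terminal $1^\infty$ is unambiguous. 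More precisely: I would show that for $x\in X_{S,\ell}$, the quantity $j(x)-($ position of last $0$ of $x)=j(x)-0=j(x)$ depends only on a bounded window of $x$ around the origin (by radius-$r$ locality of $F$ and $F^{-1}$), and that if it took two different values on two inputs agreeing on a large window — contradiction via continuity — or if it genuinely depends on far-left data, one glues to transport a ``wrong'' value next to a ``right'' value, again contradicting that $F(\text{glued point})\in X_S$ must have a single last $0$. This forces $j(x)\equiv i$ constant on $X_{S,\ell}$, giving $F(X_{S,\ell})\subseteq\sigma^i(X_{S,\ell})$. The symmetric argument on $X_{S,\arr}$ (configurations with ${}^\infty10$ to the left, i.e.\ finitely many $0$'s to the left, last one at the origin) gives $F(X_{S,\arr})\subseteq\sigma^{i'}(X_{S,\arr})$ for a constant $i'$, and then one shows $i=i'$ by gluing an element of $X_{S,\arr}$ to an element of $X_{S,\ell}$ across a long $0^n$ (again legal since such gluings lie in $X_S$ for $n\in S$, and $S$ is infinite) and tracking that the two shift amounts must coincide, exactly as ``intrinsic left shift $=$ intrinsic right shift'' was argued in the earlier lemmas of this subsection and of Subsection~\ref{choiceSubSect}.

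\textbf{Main obstacle.} The delicate point is the rigidity of $j(x)$: ruling out the possibility that $F$ shifts the ``last $0$'' by an amount depending on the left-hand data of $x$, or on its length. This is where non-soficity is essential — for a sofic $X_S$ one has short blocking words and the argument would fail. The trick, as elsewhere in the paper, is: non-soficity $\Rightarrow$ $S$ has arbitrarily long gaps $\Rightarrow$ one can freely insert long $0$-runs and freely glue, so that a non-constant shift amount can be transported next to a constant one, forcing two distinct ``terminal $0$'' positions in one configuration — impossible in $X_S$. Making the gluing legitimate (checking the glued word lies in $L(X_S)$, which needs $n\in S$, hence uses $S$ infinite) and choosing the windows large relative to $r$ are the routine parts; the conceptual core is this transport-by-gluing contradiction.
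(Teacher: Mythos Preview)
Your overall strategy---glue configurations and derive a forbidden word in the image---is the right one, and it is also the paper's. But two concrete problems keep your argument from going through.

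\textbf{The combinatorics of $X_S$ are inverted.} Several times you speak of inserting ``long $0$-runs'' and of gluing across a block $0^n$. In an $S$-gap shift the $0$'s are \emph{isolated} separators; the free parameter is the length of the $1$-blocks between them, and $01^n0\in L(X_S)$ exactly when $n\in S$. So the legal gluing of some $x_\ell\in X_{S,\ell}$ to some $x_\arr\in X_{S,\arr}$ is across a block $01^j0$ with $j\in S$, not across $0^n$. This is not a cosmetic slip: the contradiction you need comes precisely from producing a block $01^{m}0$ with $m\notin S$ in the image, and that mechanism is invisible if you are thinking about runs of $0$'s.

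\textbf{The use of non-soficity is wrong, and the real combinatorial lemma is missing.} You write ``non-soficity $\Rightarrow$ $S$ has arbitrarily long gaps''. That implication is false (take $S$ containing all odd numbers together with a non-periodic selection of even numbers), and in any case it is not what is needed. The paper uses a different---and correct---consequence of $S$ not being eventually periodic: for any fixed $i\neq 0$ there exist arbitrarily large $j$ with $j\in S$ and $j+i\notin S$. (Otherwise $S(j+i)=1$ whenever $S(j)=1$ for all large $j$, and combined with the corresponding statement for $F^{-1}$ one gets eventual periodicity of period $|i|$.) This is the engine of the proof: normalize so that one chosen $x_\arr\in X_{S,\arr}$ satisfies $F(x_\arr)\in X_{S,\arr}$; if some $x_\ell\in X_{S,\ell}$ had $F(x_\ell)\in\sigma^i(X_{S,\ell})$ with $i>0$, pick such a $j$, form $x=x_\ell[-\infty,-1]\,0\,1^j.\,0\,x_\arr[1,\infty]\in X_S$, and observe that $F(x)$ contains $01^{j+i}0$, which is forbidden. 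Your ``transport a wrong value next to a right value, forcing two distinct terminal $0$ positions'' does not yield a contradiction---configurations of $X_S$ may contain many $0$'s---whereas the length-mismatch $01^{j+i}0$ does.

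Once this gluing argument is in place, you do not need to separately prove that the left and right shifts coincide: the same gluing (with the roles of $\ell$ and $\arr$ exchanged) shows $F(X_{S,\arr})\subseteq X_{S,\arr}$ directly, because you have already normalized using a single $x_\arr$.
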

\begin{proof}Let $x_\arr\in X_{S,\arr}$ be arbitrary. Without loss of generality (by composing $F$ with a suitable power of the shift if necessary) we may assume that $F(x_\arr)\in X_{S,\arr}$. We will show that $F(X_{S,\ell})\subseteq X_{S,\ell}$. Let us therefore assume to the contrary and without loss of generality (by considering $F^{-1}$ instead of $F$ if necessary) that there exists $x_\ell\in X_{S,\ell}$ such that $F(x_\ell)\in \sigma^i(X_{S,\ell})$ for some $i>0$. Since $S$ is not eventually periodic, it follows that there are arbitrarily large $j\in\N$ such that $S(j)=1$ and $S(j+i)=0$. This is a contradiction, because for sufficiently large such $j$ it holds that $x=x_\ell[-\infty,-1]01^j.0 x_\arr[1,\infty]\in X_S$, but from $F(x_\ell)\in \sigma^i(X_{S,\ell})$ and $F(x_\arr)\in X_{S,\arr}$ it follows that $F(x)$ contains an occurrence of the forbidden pattern $01^{j+i}0$.

Because $F(X_{S,\ell})\subseteq X_{S,\ell}$, we can use the argument of the previous paragraph to show that $F(X_{S,\arr})\subseteq X_{S,\arr}$.
\end{proof}

If $F$ and $i$ are as in the previous lemma, we say that the \emph{intrinsic shift} of $F$ is equal to $i$. If $i=0$, we say that $F$ is \emph{shiftless}.

\begin{corollary}Assume that $X_S$ is not sofic. Let $F\in\aut(X_S)$ be a shiftless radius-$r$ automorphism and let $f:\digs_2^{2r+1}\to \digs_2$ be a local rule that defines $F$. For any word $w\in \digs_2^r$ the following hold: $f(w01^r)=f(1^r0w)=0$, $f(w1^{r+1})=1$ and $f(1^{r+1}w)=1$ (whenever all the words involved are from $L(X_S)$).\end{corollary}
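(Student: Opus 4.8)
The plan is to obtain the four identities by evaluating the defining local rule $f$ of $F$ along carefully chosen configurations in $X_{S,\ell}$ and $X_{S,\arr}$ and reading off which symbols the preceding two lemmas force in the image. Since $F$ is shiftless, the lemma on $X_{S,\ell},X_{S,\arr}$ specializes to $F(X_{S,\ell})\subseteq X_{S,\ell}$ and $F(X_{S,\arr})\subseteq X_{S,\arr}$, and the earlier lemma gives $F(1^\Z)=1^\Z$; the latter immediately yields $f(1^{2r+1})=F(1^\Z)[0]=1$, which settles the case $w=1^r$ in the two identities asserting value $1$.

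The one step with genuine content is a gluing observation, and this is where I expect the (minor) difficulty to lie and where non-soficity is used essentially. Because $X_S$ is not sofic, $S$ is unbounded, so $1^\Z\in X_S$; moreover, for any $u\in L(X_S)$ beginning with $0$ there is a configuration $x\in X_S$ with $x[0,\abs{u}-1]=u$ and $x[-\infty,-1]={}^\infty 1$. Indeed, take any occurrence of $u$ in a point of $X_S$, shift it so that it starts at position $0$, and glue ${}^\infty 1$ onto the left: every subword of the result that is not already a subword of the original point has the form $1^a v$ with $a\ge 1$ and $v\in L(X_S)$ beginning with $0$, and such a word lies in $L(X_S)$ because prepending $1$'s to a word starting with $0$ creates no new maximal run of $1$'s flanked by $0$'s, and $S$ is unbounded. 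Symmetrically, for any $u\in L(X_S)$ ending with $0$ there is $x\in X_S$ with $x[-\abs{u}+1,0]=u$ and $x[1,\infty]=1^\infty$.

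Granting this, the two identities with value $0$ are immediate. For $f(1^r0w)=0$: from $1^r0w\in L(X_S)$ we get $0w\in L(X_S)$, so there is $x\in X_S$ with $x[0,r]=0w$ and $x[-\infty,-1]={}^\infty 1$; then $x\in X_{S,\arr}$, hence $F(x)[0]=0$, while $x[-r,r]=1^r0w$, so $f(1^r0w)=F(x)[0]=0$. For $f(w01^r)=0$: from $w0\in L(X_S)$ there is $x\in X_S$ with $x[-r,0]=w0$ and $x[1,\infty]=1^\infty$; then $x\in X_{S,\ell}$, hence $F(x)[0]=0$, while $x[-r,r]=w01^r$, so $f(w01^r)=0$.

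For the two identities with value $1$ it remains to treat $w\neq 1^r$. For $f(1^{r+1}w)=1$: let $m\in\{1,\dots,r\}$ be least with $w[m]=0$ and write $w=1^{m-1}0v$; then $0v\in L(X_S)$, so there is $x\in X_{S,\arr}$ with $x[0,r-m]=0v$ and $x[-\infty,-1]={}^\infty 1$; the inclusion forces $F(x)[-m]=1$ since $-m\le -1$, and the radius-$r$ window centered at $-m$ is $x[-m-r,-m+r]=1^{m+r}0v=1^{r+1}w$, so $f(1^{r+1}w)=1$. For $f(w1^{r+1})=1$: let $m$ be greatest with $w[m]=0$, write $w=u01^{r-m}$; then $u0\in L(X_S)$, so there is $x\in X_{S,\ell}$ with $x[-(m-1),0]=u0$ and $x[1,\infty]=1^\infty$; the inclusion forces $F(x)[r+1-m]=1$ since $r+1-m\ge 1$, and the radius-$r$ window centered at $r+1-m$ is $x[1-m,2r+1-m]=u01^{2r+1-m}=w1^{r+1}$, so $f(w1^{r+1})=1$. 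The remaining checks — that the displayed windows really have the stated form, including the degenerate cases $m=1$ and $m=r$ — are routine bookkeeping.
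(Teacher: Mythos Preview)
Your proof is correct and is precisely the intended unpacking of the corollary: the paper states it without proof as an immediate consequence of the inclusions $F(X_{S,\ell})\subseteq X_{S,\ell}$, $F(X_{S,\arr})\subseteq X_{S,\arr}$ (from shiftlessness) together with $F(1^\Z)=1^\Z$, and your argument supplies exactly the configurations in $X_{S,\ell}$ and $X_{S,\arr}$ needed to read off each local rule value. The gluing observation you isolate (that any word of $L(X_S)$ beginning with $0$ can be extended by ${}^\infty 1$ on the left, because $S$ is unbounded and the forbidden patterns are all of the form $01^n0$) is the only nontrivial point, and you handle it correctly.
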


\begin{corollary}\label{gapinert}Assume that $X_S$ is not sofic. Let $F\in\aut(X_S)$ be a shiftless radius-$r$ automorphism whose inverse is also a radius-$r$ automorphism. If $x\in X_S$, $i\in\Z$ and $n\geq 2r$, then $01^n0$ occurs in $x$ at position $i$ if and only if it occurs in $F(x)$ at position $i$.\end{corollary}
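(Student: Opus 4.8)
The plan is to read this off directly from the preceding corollary on the local rule $f$; the whole proof is a short computation with windows of length $2r+1$. First I would reduce to a single implication: since $F$ is a radius-$r$ automorphism we have $F(x)[j]=f(x[j-r],\dots,x[j+r])$ for every $j$, so it suffices to show that if $01^n0$ occurs in $x$ at position $i$ then it occurs in $F(x)$ at position $i$, because applying this to $F^{-1}$ with $F(x)$ in place of $x$ then yields the reverse implication. For that application I need $F^{-1}$ to be shiftless as well, which follows from the lemma defining the intrinsic shift: if $F(X_{S,\ell})\subseteq X_{S,\ell}$ and $i'$ is the intrinsic shift of $F^{-1}$, then $X_{S,\ell}=F(F^{-1}(X_{S,\ell}))\subseteq\sigma^{i'}(X_{S,\ell})$, and since $X_{S,\ell}$ is nonempty and disjoint from $\sigma^{i'}(X_{S,\ell})$ for every $i'\neq0$, we must have $i'=0$.

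For the forward implication, suppose $x[i,i+n+1]=01^n0$, that is $x[i]=0$, $x[i+1,i+n]=1^n$ and $x[i+n+1]=0$, and I would compute $F(x)[j]$ for $i\le j\le i+n+1$ using the four identities $f(w01^r)=f(1^r0w)=0$ and $f(w1^{r+1})=f(1^{r+1}w)=1$ from the preceding corollary, which apply because every length-$(2r+1)$ window of $x$ is a subword of $x\in X_S$ and hence lies in $L(X_S)$. At $j=i$ the window $x[i-r,i+r]$ has the shape $w01^r$ (using $n\ge r$), so $F(x)[i]=0$, and symmetrically $F(x)[i+n+1]=0$. For an interior $j$ with $i+1\le j\le i+n$: if $j\le i+n-r$ then $x[j,j+r]=1^{r+1}$, so the window has the shape $w1^{r+1}$; if $j\ge i+r+1$ then $x[j-r,j]=1^{r+1}$, so the window has the shape $1^{r+1}w$; in either case $F(x)[j]=1$. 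Since $n\ge 2r$ we have $i+n-r\ge i+r$, so every interior $j$ satisfies at least one of the two conditions, hence $F(x)[i+1,i+n]=1^n$ and altogether $F(x)[i,i+n+1]=01^n0$.

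The only delicate point is the index bookkeeping in the middle of the run: one must check exactly which positions of $1^n$ lie far enough from each of the two endpoints for a homogeneous block $1^{r+1}$ to fit inside a window around them, and observe that the bound $n\ge 2r$ in the statement is precisely what makes these two families of positions cover the whole run $1^n$. I expect this to be the only step where care is needed; the reduction via $F^{-1}$ and the remark that the relevant windows belong to $L(X_S)$ are routine.
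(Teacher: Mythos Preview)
Your argument is correct and is precisely the computation the paper has in mind: the corollary is stated without proof, and the intended derivation is exactly this windowing argument from the preceding local-rule corollary, with the reverse implication obtained by applying the same to $F^{-1}$. Your check that $F^{-1}$ is also shiftless and your verification that $n\ge 2r$ is exactly the threshold needed for the two families $\{j\le i+n-r\}$ and $\{j\ge i+r+1\}$ to cover the interior are both accurate.
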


Now we can show that Theorem~\ref{sensitiveCA} does not extend to general synchronizing shifts and not even to general shifts with specification.

\begin{theorem}
Assume that $X_S$ is not sofic. Then every reversible cellular automaton $F:X_S\to X_S$ has an almost equicontinuous direction.
\end{theorem}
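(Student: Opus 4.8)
The plan is to prove that if $F\in\aut(X_S)$ has intrinsic shift $i$ (which exists by the structural lemma above, using that $X_S$ is not sofic), then $(-i)/1$ is an almost equicontinuous direction of $F$. Unwinding the definition, this amounts to showing that the automorphism $G=\sigma^{-i}\circ F$ is almost equicontinuous; since $X_S$ is synchronizing and hence transitive, Proposition~\ref{equiblock} reduces the task to exhibiting a single blocking word for $G$.

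The first step is to check that $G$ is \emph{shiftless}. This is immediate from the lemma above, which gives $F(X_{S,\ell})\subseteq\sigma^i(X_{S,\ell})$ and $F(X_{S,\arr})\subseteq\sigma^i(X_{S,\arr})$ with the \emph{same} $i$: composing with $\sigma^{-i}$ on the left then sends each of $X_{S,\ell}$ and $X_{S,\arr}$ into itself, which is exactly shiftlessness. Next, fix $r\in\N$ such that both $G$ and $G^{-1}$ are radius-$r$ cellular automata, and (using that $X_S$ non-sofic forces $S$ to be infinite) choose $n\in S$ with $n\geq 2r$. The candidate blocking word is $w=01^n0\in L(X_S)$.

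The core of the argument is the observation that the \emph{entire} word $w$ stays frozen in place under all iterates of $G$. Corollary~\ref{gapinert}, applied to the shiftless radius-$r$ automorphism $G$, says that for $n\geq 2r$ the block $01^n0$ occurs at position $0$ in $z$ if and only if it occurs at position $0$ in $G(z)$, for every $z\in X_S$. Feeding in $z=G^{m-1}(x)$ and inducting on $m$, every $x\in\cyl_{X_S}(w,0)$ satisfies $G^m(x)[0,\abs{w}-1]=w$ for all $m\in\N$. Hence, with the choices $e=\abs{w}=n+2$ (so that $\abs{w}\geq e\geq r+1$, since $n\geq 2r$) and offset $p=0\in[0,\abs{w}-e]$, the defining inequality of a blocking word holds for $G$: on the window $[0,e-1]$ the image $G^m(x)$ is the constant word $w$, independent of both $x\in\cyl_{X_S}(w,0)$ and $m$. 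Proposition~\ref{equiblock} then yields that $G=\sigma^{-i}\circ F$ is almost equicontinuous, i.e. $(-i)/1$ is an almost equicontinuous direction of $F$, completing the proof.

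I do not expect a real obstacle here, since the two facts doing the genuine work — the existence of the intrinsic shift and Corollary~\ref{gapinert} (non-sofic $S$-gap shifts contain arbitrarily long ``inert'' gap blocks $01^n0$ that a normalized reversible CA cannot disturb) — have already been established in this subsection. The only subtleties worth flagging are the normalization and the choice of window: one must argue about $G=\sigma^{-i}\circ F$ rather than about $F$ directly, because Corollary~\ref{gapinert} requires shiftlessness; and it is essential that it is all of $w=01^n0$ that is pinned down, not merely its interior run of $1$s, so that the frozen window may be taken equal to $w$ itself and no analysis of the local rule of $G$ is needed.
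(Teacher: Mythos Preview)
Your proposal is correct and follows essentially the same approach as the paper's proof: normalize to the shiftless $G=\sigma^{-i}\circ F$, pick $n\in S$ with $n\geq 2r$, and use Corollary~\ref{gapinert} to see that $01^n0$ is blocking, then conclude via Proposition~\ref{equiblock}. You simply spell out in more detail why $01^n0$ satisfies the blocking word definition (with $e=\abs{w}$ and $p=0$), whereas the paper leaves this verification implicit.
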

\begin{proof}
Assume that $F$ has intrinsic shift $i$. Then $F'=\sigma^{-i}\circ F$ is shiftless. Let $r$ be a radius for both $F'$ and its inverse and choose an arbitrary $n\in S$ such that $n\geq 2r$. By the previous corollary the word $01^n0$ is blocking for $F'$ so by Proposition~\ref{equiblock} $F'$ is almost equicontinuous. Then $-i$ is an almost equicontinuous direction for $F$.
\end{proof}

Corollary~\ref{gapinert} can also be used to show that Theorem~\ref{transRyan} (Finitary Ryan's theorem) does not extend to shifts with specification.

\begin{theorem}\label{nogapFin}If $X_S$ is not sofic, then $k(X_S)=\infty$.\end{theorem}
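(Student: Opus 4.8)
The plan is to show that if $X_S$ is not sofic, then for any finite set $S_0 \subseteq \aut(X_S)$ the centralizer $C(S_0)$ is strictly larger than $\left<\sigma\right>$, which immediately gives $k(X_S) = \infty$ (it cannot be $\bot$ since $X_S$ is synchronizing, so Theorem~\ref{synchRyan} applies).

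So first I would take an arbitrary finite set $S_0 = \{F_1,\dots,F_m\} \subseteq \aut(X_S)$. Each $F_i$ has some intrinsic shift $i_j$ in the sense of the lemma preceding Corollary~\ref{gapinert}, and although composing with shifts changes the maps, the key structural fact from Corollary~\ref{gapinert} is about the \emph{shiftless} versions: if I write $F_j' = \sigma^{-i_j} \circ F_j$, then $F_j'$ is shiftless, and choosing a common radius $r$ that works for all the $F_j'$ and their inverses, Corollary~\ref{gapinert} tells me that for every $n \geq 2r$, the word $01^n0$ occurs in $x$ at position $i$ if and only if it occurs in $F_j'(x)$ at position $i$. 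Since each $F_j$ differs from $F_j'$ only by a shift, the occurrences of $01^n0$ in $F_j(x)$ are just a fixed translate of those in $x$; in particular no $F_j$ (nor any composition of them and their inverses, i.e. no element of $\left<S_0\right>$) can create or destroy an occurrence of $01^n0$ for $n \geq 2r$ — it can only translate the collection of such occurrences rigidly.

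Next I would exhibit an explicit automorphism $H \in C(S_0) \setminus \left<\sigma\right>$. Since $S$ is not eventually periodic, $S$ is infinite and I can pick three distinct values $n_1 < n_2 < n_3$ in $S$ with all $n_k \geq 2r$ (in fact I want them large enough and spread out so the relevant marker words have only trivial overlaps). Set $w_k = 01^{n_k}0$. Following the pattern of Proposition~\ref{noCompact}, I would use Lemma~\ref{markerauto} with marker $u = 0$ to build the automorphism $H$ that replaces every occurrence of the pattern $0\, w_3\, 0\, w_1\, 0\, w_2\, 0$ by $0\, w_3\, 0\, w_2\, 0\, w_1\, 0$ and vice versa (one needs to check these padded words are in $L(X_S)$, which holds since $0, 01^{n_k}0 \in L(X_S)$ and $0$ is synchronizing, and that they have only $0$ as a nontrivial overlap, which holds because the $1$-blocks of distinct lengths $n_1,n_2,n_3$ cannot align). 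This $H$ is a nontrivial automorphism that is not a power of $\sigma$ (it fixes $1^\Z$ and $0^\Z$ but moves, say, a configuration containing the left pattern to a distinct configuration in a different orbit).

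Finally I would verify $H$ commutes with every $F_j$. The point is that $H$'s action is completely determined by the \emph{positions} of the occurrences of $w_1, w_2, w_3$ (which are words of the form $01^n0$ with $n \geq 2r$): $H$ permutes the contents of certain ``slots'' delimited by these markers without altering where the markers sit. By the previous paragraph each $F_j$ translates the set of $w_k$-occurrences rigidly by $i_j$ and acts elsewhere by a local rule of radius $r$ that, by the shiftless-version corollary, also cannot disturb the $1$-runs inside these long blocks. So $F_j \circ H$ and $H \circ F_j$ agree: applying $H$ then $F_j$ performs the swap and then the rigid translation plus local action, while $F_j$ then $H$ performs the translation plus local action and then the swap at the translated positions — these coincide because the local action of $F_j$ is an identity-like map on the interiors of the marker blocks and on a neighborhood of the $0$'s separating them. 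Hence $H \in C(S_0)$, so $C(S_0) \neq \left<\sigma\right>$. Since $S_0$ was an arbitrary finite set, $k(X_S) = \infty$.

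The main obstacle I expect is the last commutativity verification: making precise exactly why $F_j$ (which is not shiftless, only a shift of a shiftless map, and which may do something nontrivial \emph{between} far-apart marker blocks) commutes with the marker-swap $H$. The clean way is to reduce to the shiftless case — observe $F_j = \sigma^{i_j} \circ F_j'$ with $F_j'$ shiftless, note $H$ commutes with $\sigma^{i_j}$ trivially, and then argue $H \circ F_j' = F_j' \circ H$ using that $F_j'$ preserves $01^n0$-occurrences positionwise (Corollary~\ref{gapinert}) and that, because these marker words are longer than twice the radius, the local rule of $F_j'$ evaluated anywhere inside or immediately around a marker block sees only $1$'s or the bordering $0$'s and hence returns the ``untouched'' symbol, so $F_j'$ acts as the identity on a full neighborhood of each marker block, which is precisely the region $H$ edits. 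One should also double-check the overlap hypothesis of Lemma~\ref{markerauto} for the concatenated pattern $0w_30w_10w_20$, but this is routine given the lengths are distinct and exceed $2r$.
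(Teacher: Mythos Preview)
Your approach is essentially the paper's: pick three large gaps $n_1<n_2<n_3\in S$ with each $n_k\geq 2r$, invoke Corollary~\ref{gapinert} to see that the (shiftless versions of the) finitely many given automorphisms fix every $01^{n_k}0$-block positionwise, and build via Lemma~\ref{markerauto} a nontrivial marker automorphism $H$ that swaps the $n_1$- and $n_2$-blocks following an $n_3$-block. The paper streamlines your reduction step: instead of writing $F_j=\sigma^{i_j}F_j'$ and checking separately that $H$ commutes with $\sigma^{i_j}$ and with $F_j'$, it simply replaces each $F_j$ by its shiftless version $\sigma^{-i_j}F_j$ at the outset, which is harmless because $C(\{F_j\})=C(\{\sigma^{-i_j}F_j\})$ (shifts being central).

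There is one genuine slip in your execution. You set $w_k=01^{n_k}0$ and then form the pattern $0\,w_3\,0\,w_1\,0\,w_2\,0$, which expands to $001^{n_3}0001^{n_1}0001^{n_2}00$. This word contains $00$, and $00\in L(X_S)$ only when $0\in S$, which is not guaranteed by non-soficness; your appeal to ``$0$ is synchronizing'' does not manufacture the missing $00$. The fix is exactly what the paper does: take $w_k=1^{n_k}$, so the marker pattern is $01^{n_3}01^{n_1}01^{n_2}0$, which lies in $L(X_S)$ because each $n_k\in S$ and $0$ is synchronizing. With this correction your argument goes through, and your more detailed commutativity verification is a legitimate expansion of the paper's one-line ``it is evident''.
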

\begin{proof}We argue similarly as in the proof of Proposition~\ref{noCompact}. In any case $k(X_S)\neq\bot$ by Theorem~\ref{synchRyan}. To see that $k(X_S)=\infty$, assume to the contrary that $R\subseteq\aut(X_S)$ is a set of cardinality of $n\in\N$ such that $C(R)=\left<\sigma\right>$. By composing the elements of $R$ by suitable powers of the shift we may assume without loss of generality that all the elements of $R$ are shiftless. Fix a number $r\in\Npos$ such that all elements of $R$ are radius-$r$ automorphisms whose inverses are also radius-$r$ automorphisms.

Let $n_1<n_2<n_3\in S$ be three distinct numbers such that $n_i\geq 2r$. Let $w_i=1^{n_i}$ and let $H\in\aut(X_S)$ be the automorphism which given a point $x\in X_S$ replaces every occurrence of the pattern
\[0 w_3 0 w_1 0 w_2 0 \qquad \mbox{by} \qquad 0 w_3 0 w_2 0 w_1 0\]
and vice versa (it exists by Lemma~\ref{markerauto} with the choice $u=0$). In light of Corollary \ref{gapinert} it is evident that the elements of $R$ cannot remove or add occurrences of the patterns defined above, so $H$ commutes with every element of $R$, a contradiction.
\end{proof}

Combining this with Theorem~\ref{transRyan} yields the following seemingly strong (but perhaps not surprising) corollary.

\begin{corollary}
If $X_S$ is not sofic then $\aut(X_S)\not\simeq\aut(Z)$ for every transitive sofic $Z$.
\end{corollary}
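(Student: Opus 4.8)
The plan is to deploy $k(\cdot)$ as an isomorphism invariant of the abstract group $\aut(\cdot)$, exploiting that $k(X_S)=\infty$ by Theorem~\ref{nogapFin} while $k(Z)$ is small for every transitive sofic $Z$. First I would dispose of the case that $Z$ is a finite subshift: transitivity then forces $Z=\orb{w^\Z}$ to be a single periodic orbit, so $\aut(Z)$ is a finite cyclic group, whereas $\aut(X_S)$ is infinite. Indeed, a non-sofic $S$-gap shift has $S$ not eventually periodic, hence $S$ infinite, hence $X_S$ infinite, and then $\sigma$ has infinite order so $\aut(X_S)\supseteq\left<\sigma\right>\cong\Z$. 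Thus no isomorphism $\aut(X_S)\to\aut(Z)$ can exist when $Z$ is finite, and we may assume $Z$ is infinite, so that $k(Z)=2$ by Theorem~\ref{transRyan}.

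Next I would check that $k$ is genuinely invariant under group isomorphisms for the subshifts at hand, which amounts to giving a purely group-theoretic description of the subgroup $\left<\sigma\right>$. In any subshift $\sigma$ commutes with every cellular automaton (Hedlund's theorem), so $\left<\sigma\right>\subseteq Z(\aut(X))$, and $\left<\sigma\right>$ is infinite whenever $X$ is infinite. Conversely, for $X=X_S$ Theorem~\ref{synchRyan} provides some $S_0\subseteq\aut(X_S)$ with $C(S_0)=\left<\sigma\right>$, and for $X=Z$ infinite transitive sofic Theorem~\ref{transRyan} provides such an $S_0$ with $\abs{S_0}=2$; in either case $Z(\aut(X))\subseteq C(S_0)=\left<\sigma\right>$, whence $Z(\aut(X))=\left<\sigma\right>$. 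Therefore $k(X)=\min\{\abs{S}\mid S\subseteq\aut(X),\ C(S)=Z(\aut(X))\}$, a quantity preserved by every isomorphism of $\aut(X)$ onto another group.

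Finally, suppose for contradiction that $\phi\colon\aut(X_S)\to\aut(Z)$ is an isomorphism. Choose $S'\subseteq\aut(Z)$ with $\abs{S'}=2$ and $C_{\aut(Z)}(S')=\left<\sigma\right>=Z(\aut(Z))$, which is possible since $k(Z)=2$. A group isomorphism carries centralizers of subsets to centralizers of their images and carries the center onto the center, so $C_{\aut(X_S)}(\phi^{-1}(S'))=\phi^{-1}\bigl(C_{\aut(Z)}(S')\bigr)=\phi^{-1}\bigl(Z(\aut(Z))\bigr)=Z(\aut(X_S))=\left<\sigma\right>$. Thus the two-element set $\phi^{-1}(S')$ witnesses $k(X_S)\le 2$, contradicting $k(X_S)=\infty$. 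The only step needing real care is the second one: a priori $k(X_S)$ is defined in terms of the concrete subgroup $\left<\sigma\right>$ rather than an abstractly-specified one, and it is exactly Ryan's theorem (Theorem~\ref{synchRyan}) that lets us identify $\left<\sigma\right>$ with the center of $\aut(X_S)$, turning $k$ into a bona fide isomorphism invariant; everything else is routine bookkeeping with centralizers under $\phi$.
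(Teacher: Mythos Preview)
Your proof is correct and follows essentially the same approach as the paper, which simply says the corollary follows by combining Theorem~\ref{nogapFin} with Theorem~\ref{transRyan}. You have filled in the details the paper leaves implicit: the finite case for $Z$, and the identification of $\left<\sigma\right>$ with the center via Ryan's theorem so that $k(\cdot)$ becomes a genuine isomorphism invariant (a point the paper already remarks on in the introduction).
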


\section{Conclusions}

We conclude with some speculations. We guess that whenever $X$ is a transitive subshift for which $\aut(X)$ is ``large'' as an abstract group, then $k(X)<\infty$ implies that $\aut(X)$ contains a reversible CA without almost equicontinuous directions. This would be interesting because it would connect a group theoretical property of $\aut(X)$ to the possible CA dynamics on the subshift $X$.

The group $\aut(X)$ is large at least when $X$ is an infinite synchronizing subshift in the sense that it contains an isomorphic copy of the free product of all finite groups~\cite{Fie96}. If $X$ is a nontrivial mixing sofic shift, then by Theorems~\ref{sensitiveCA} and~\ref{transRyan} $\aut(X)$ contains a CA without almost equicontinuous directions and $k(X)=2$. On the other hand, in the previous subsection we saw examples of subshifts $X$ with the specification property such that every $F\in\aut(X)$ has a direction that admits blocking words and we used the existence of blocking words to prove that $k(X)=\infty$.

The assumption of largeness of $\aut(X)$ is necessary. By~\cite{Fie96} for any finite group $\grp{G}$ there is a coded subshift $X$ such that $\aut(X)\simeq \Z\oplus\grp{G}$, where the part $\Z$ corresponds to the shift maps. Then $k(X)=0$ whenever $C_{\grp{G}}(\grp{G})=\{1_{\grp{G}}\}$ but every element of $\aut(X)$ has an almost equicontinuous direction.

\begin{problem}\label{probSynchEqui}
Is $k(X)=\infty$ for every infinite synchronizing subshift $X$ such that every $F\in\aut(X)$ admits an almost equicontinuous direction?
\end{problem}

We note that there are synchronizing non-sofic subshifts that admit CA with only sensitive directions. For example, whenever $X$ is synchronizing and non-sofic, then so is also $Y=X\times X$ and the CA $F:Y\to Y$ defined by $F(x_1,x_2)=(\sigma(x_1),\sigma^{-1}(x_2))$ for $x_1,x_2\in X$ has only sensitive directions. In the light of examples such as this, it is not clear what kind of an answer one should expect to the following problem. 

\begin{problem}\label{probTransEqui}
Characterize the transitive non-sofic subshifts that admit reversible CA with only sensitive directions.
\end{problem}

We guess that $k(Y)=\infty$ at least when $Y=X_S\times X_S$ for some synchronizing non-sofic $S$-gap shift $X_S$, which would mean that the existence of reversible CA with only sensitive directions is not sufficient to prove a finitary Ryan's theorem for general synchronizing shifts.

\begin{problem}\label{probSynchRyan}
Is $k(X)=\infty$ for every non-sofic synchronizing subshift $X$?
\end{problem}

We also ask whether the existence of a reversible CA $F:X\to X$ with only sensitive directions on a subshift $X$ has a simple dynamical characterization based on $X$ or a simple combinatorial characterization based on the language $L(X)$ or the syntactic monoid $\syn_X$.

\section*{Acknowledgements}
I thank Mike Boyle for suggesting that I present the construction of Section~\ref{glidertransSect} so that diffusion can happen against any chosen periodic background that contains a synchronizing word (instead of a specially selected background as in~\cite{KopDiss}). I also thank him for suggesting that I include Remark~\ref{BoyleRmrk}. The work was supported by the Academy of Finland grant 296018.

\bibliographystyle{plain}
\bibliography{mybib}{}

\end{document}